\newtheorem{theorem}{Theorem}[section]
\newtheorem{lemma}{Lemma}[section]
\newtheorem{corollary}{Corollary}[section]
\theoremstyle{definition}
\newtheorem{open problem}{Open Problem}
\def\cal{\mathcal}
\let\Re=\undefined
\DeclareMathOperator{\Re}{Re}
\let\Im=\undefined
\DeclareMathOperator{\Im}{Im}
\begin{document}
\title[ \ldots The sharp estimates on the orthogonal polynomials
\ldots
 ]{The sharp estimates on the orthogonal polynomials from the Steklov class }
\author{A. Aptekarev, S. Denisov, D. Tulyakov }
\address{
\begin{flushleft}
University of Wisconsin--Madison\\  Mathematics Department\\
480 Lincoln Dr., Madison, WI, 53706, USA\\
  denissov@math.wisc.edu\\ \vspace{1cm}
Keldysh Institute for Applied Mathematics,
Russian Academy of Sciences\\
Miusskaya pl. 4, 125047 Moscow, RUSSIA\\aptekaa@keldysh.ru
\end{flushleft}
  }
 \maketitle

\maketitle

\maketitle
\begin{abstract}
Given any $\delta\in (0,\delta_0)$ with sufficiently small $\delta_0$, we define the Steklov class
$S_\delta$ to be the set of probability measures $\sigma$ on the
unit circle, such that $\sigma'(\theta)\geq\delta/(2\pi)>0$ at every
Lebesgue point of $\sigma$. One can define the orthonormal
polynomials $\phi_n(z)$ with respect to $\sigma \in S_\delta$. In
this paper we consider the following variational problem. Fix $n \in
\mathbb{N}$ and define $ M_{n,\delta}=\sup_{\sigma\in S_\delta}
\|\phi_n\|_{L^\infty(\mathbb{T})}$.  Our main result is the lower bound in  the following estimate
 \[
C(\delta)\sqrt n < M_{n,\delta}\leq \sqrt{\frac{n+1}{\delta}}
\]
\end{abstract} \vspace{1cm}

\large

\section{Introduction}

One version of the Steklov's problem  (see~\cite{1}, \cite{2}) is to
find bounds for the polynomial sequences
$\{P_{n}(x)\}_{n=0}^{\infty}$, which are orthonormal
\begin{equation}\label{1}
\int\limits_{-1}^{1}P_{n}(x)\,P_{m}(x)\,\rho(x)\,dx=\delta_{n,m}\;,\quad n,m=0,1,2\ldots
\end{equation}
with respect to the strictly positive weight $\rho$:
\begin{equation}\label{2}
\rho(x)\geqslant\delta>0\;,\quad x\in[-1,1]\;.
\end{equation}
In 1921, V.A.~Steklov made a conjecture that a sequence
$\{P_{n}(x)\}$ is bounded at any point $x\in(-1,1)$ provided that
the weight $\rho$ does not vanish on $[-1,1]$. This problem and some
related questions gave rise to an extensive research, see, e.g., the
survey paper \cite{2}. In 1979, Rakhmanov~\cite{3} disproved this
conjecture constructing a weight from the Steklov class \eqref{2},
for which
$$
\sup\limits_{n}\,\{|P_{n}(0)|\}=\infty\;.
$$
It is known (see, for
example~\cite{5}) that for any Steklov's weight the following bound
$$
|P_{n}(x)|=\overline{o}(\sqrt{n})
$$
holds true for any $x\in(-1,1)$. In his next paper ~\cite{4}, Rakhmanov proved that
for every $\epsilon>0$ and $x_0\in (-1,1)$ there is a weight  $\rho(x;x_{0},\varepsilon)$
from the Steklov class such that
the corresponding $\{P_{n}(x)\}$ are growing as
\begin{equation}\label{3}
|P_{n}(x_{0})|\geqslant n^{1/2-\varepsilon}\;,\quad n\in\Lambda\,,
\end{equation}
where $\Lambda$ is some sequence of natural numbers. In \cite{murman}, the size of the polynomials for the continuous weight was studied.

All Rakhmanov's counterexamples were obtained as corollaries of
the corresponding results for the polynomials $\{\phi_{n}\}$
orthonormal on the unit circle
\begin{equation}\label{4}
\int\limits_{0}^{2\pi}\phi_{n}\,\overline{\phi}_{m}\,d\sigma(\theta)=
\delta_{n,m}\;,\quad n,m=0,1,2\ldots\,,
\end{equation}
with respect to  measures from the Steklov class
$S_{\delta}$ defined as the class of probability measures $\sigma$ on the unit circle satisfying
\[
\sigma'\geq\delta/(2\pi)
\]
at every Lebesgue point.
An important role in this construction was played by the following
extremal problem. For a fixed $n$, define
\begin{equation}\label{6}
M_{n,\delta}=\sup\limits_{\sigma\in
S_{\delta}}\|\phi_{n}(z;\sigma)\|_{L^{\infty}(\mathbb{T})}
\end{equation}
The problem of estimating the size of $\phi_n$ is one the most basic and most well-studied in the Approximation theory. The Steklov's condition on the weight is very important: e.g., it is a natural way to normalize the weight and the corresponding polynomials on the arcs in $\mathbb{T}$. Indeed, if one multiplies  the weight by the constant $C$, the corresponding polynomials  will be multiplied by $C^{-1/2}$ and one can expect similar phenomenon on the arcs.  The Steklov's condition rules out the scalings like that. In spite of the importance of this problem, very few methods were available to handle questions of that type. In the current paper we suggest a method which, we believe, is general enough to study the variational problems for other situations when the constructive information on the weight is given.\bigskip

{\bf Remark.} Since $S_\delta$ is invariant under the rotation and
$
\{\phi_j(ze^{-i\theta_0},\mu)\}
$
are orthonormal with respect to $\mu(\theta-\theta_0)$,
we can always assume that $\|\phi_n\|_\infty$ is reached at point $z=1$. Therefore, we have
\[
M_{n,\delta}=\sup_{\mu\in S_\delta} |\phi_n(1,\mu)|
\]
\bigskip

One of the key results in~\cite{4} is the following inequality
\begin{equation}\label{7}
C\,\sqrt{\frac{n+1}{\delta \ln^{3}n}}\leqslant M_{n,\delta}\;,\quad
C>0\;.
\end{equation}
We also recall the well known estimate (see~\cite{5})
\begin{lemma} We have
\begin{equation}\label{8}
M_{n,\delta}\leqslant\sqrt{\frac{n+1}{\delta}}\;,\quad
n\in\mathbb{N}\;.
\end{equation}
\end{lemma}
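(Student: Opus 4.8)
The plan is to use the extremal (reproducing‐kernel) characterization of $\|\phi_n\|_{L^\infty}$ together with the Steklov lower bound on $\sigma'$. Fix $\sigma\in S_\delta$ and, by the Remark, it suffices to bound $|\phi_n(1,\sigma)|$. Write the Christoffel–Darboux kernel $K_n(z,w)=\sum_{j=0}^{n}\overline{\phi_j(w)}\,\phi_j(z)$, and recall the reproducing property $\int_{\mathbb T} P(e^{i\theta})\,\overline{K_n(e^{i\theta},w)}\,d\sigma(\theta)=P(w)$ for every polynomial $P$ of degree at most $n$, along with the diagonal value $K_n(w,w)=\sum_{j=0}^n|\phi_j(w)|^2$.

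The key steps, in order, are as follows. First, since $\phi_n$ has degree exactly $n$, I would apply the reproducing identity with $P=\phi_n$ and $w=1$ to get $\phi_n(1)=\int_{\mathbb T}\phi_n(e^{i\theta})\,\overline{K_n(e^{i\theta},1)}\,d\sigma(\theta)$; by Cauchy–Schwarz in $L^2(d\sigma)$ and orthonormality of $\phi_n$, this yields $|\phi_n(1)|\le\|K_n(\cdot,1)\|_{L^2(d\sigma)}=\sqrt{K_n(1,1)}$. Second — and this is the only place the Steklov hypothesis enters — I would bound $K_n(1,1)$ from above. For that, note that $z\mapsto K_n(z,1)$ is, up to normalization, the reproducing kernel, and the standard extremal property gives $1/K_n(1,1)=\min\{\int_{\mathbb T}|Q|^2\,d\sigma:\deg Q\le n,\ Q(1)=1\}$. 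Choosing the trial polynomial $Q$ to be (a scalar multiple of) the normalized Fejér kernel of degree $n$, which satisfies $Q(1)=1$, $Q\ge 0$ on $\mathbb T$, and $\int_{\mathbb T}|Q(e^{i\theta})|^2\,\frac{d\theta}{2\pi}=\frac{1}{n+1}$, together with $d\sigma\ge\frac{\delta}{2\pi}\,d\theta$ (valid since $\sigma'\ge\delta/(2\pi)$ a.e., and the singular part only helps), I get $\int_{\mathbb T}|Q|^2\,d\sigma\ge\frac{\delta}{n+1}$, hence $K_n(1,1)\le\frac{n+1}{\delta}$. Combining the two steps gives $|\phi_n(1)|\le\sqrt{(n+1)/\delta}$, and taking the supremum over $\sigma\in S_\delta$ finishes the proof.

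The main (and essentially only) obstacle is the second step: producing the right degree-$n$ test polynomial with $Q(1)=1$ whose $L^2(d\theta)$ norm is $O(1/\sqrt{n+1})$, so that the Steklov bound converts it into the claimed estimate on $K_n(1,1)$. The Fejér kernel does this cleanly, but one should double-check the exact normalization constant so that the bound comes out as $(n+1)/\delta$ and not a worse constant; alternatively one can invoke directly the classical fact that $K_n(e^{i\theta_0},e^{i\theta_0})\le (n+1)/\big(2\pi\,\mathrm{ess\,inf}\,\sigma'\big)$ for absolutely continuous lower bounds on the measure, which is exactly the content one needs. Everything else — the reproducing identity and Cauchy–Schwarz — is routine.
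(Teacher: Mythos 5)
You take a genuinely different route from the paper. The paper's proof is a bare-hands coefficient argument: from the normalization $1=\int|\phi_n|^2\,d\sigma \ge \frac{\delta}{2\pi}\int|\phi_n|^2\,d\theta$ one gets $\sum_{j=0}^n|c_j|^2\le 1/\delta$ for the Taylor coefficients of $\phi_n$, and then Cauchy--Schwarz applied directly to $\phi_n(z)=\sum c_jz^j$ gives $|\phi_n|\le\sqrt{(n+1)\sum|c_j|^2}\le\sqrt{(n+1)/\delta}$ everywhere on $\mathbb{T}$. Your route goes through the reproducing kernel: $|\phi_n(1)|^2\le K_n(1,1)$, then a bound on the Christoffel function $\lambda_n(1)=1/K_n(1,1)$ by comparison with Lebesgue measure. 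Both are legitimate; the paper's is shorter and avoids the kernel machinery, while yours links to the classical Christoffel function estimates and only bounds $\phi_n$ at a point (which suffices by the rotation remark).

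There is, however, a directional slip in your step bounding $K_n(1,1)$. After writing $1/K_n(1,1)=\min\{\int|Q|^2\,d\sigma:\deg Q\le n,\ Q(1)=1\}$, you ``choose the trial polynomial $Q$'' (the Fej\'er/Dirichlet polynomial), compute $\int|Q|^2\,d\sigma\ge\delta/(n+1)$, and conclude $K_n(1,1)\le(n+1)/\delta$. But plugging a single competitor into a minimum can only produce an \emph{upper} bound on that minimum, i.e.\ a \emph{lower} bound on $K_n(1,1)$ --- the opposite of what you need. The repair: argue over \emph{all} admissible $Q$. Steklov gives $\int|Q|^2\,d\sigma\ge\frac{\delta}{2\pi}\int|Q|^2\,d\theta$ for every polynomial $Q$, and the Lebesgue Christoffel function fact (for which the polynomial $Q(z)=\frac{1}{n+1}(1+z+\dots+z^n)$ is the \emph{extremizer}, not a test function for $d\sigma$) gives $\frac{1}{2\pi}\int|Q|^2\,d\theta\ge\frac{1}{n+1}$ for every $Q$ with $Q(1)=1$, $\deg Q\le n$; combining and taking the minimum yields $1/K_n(1,1)\ge\delta/(n+1)$. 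Your parenthetical citation of the classical bound $K_n\le(n+1)/(2\pi\,\mathrm{ess\,inf}\,\sigma')$ is exactly this corrected statement, so the idea is clearly in your hands --- the written argument just runs the inequality backwards. One small additional inaccuracy: the analytic polynomial $Q$ is not $\ge 0$ on $\mathbb{T}$; it is $|Q|^2$ that is (a multiple of) the Fej\'er kernel, though since only $|Q|^2$ enters this is harmless.
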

\begin{proof}
Indeed,
\[
1=\int_{\mathbb{T}}|\phi_n|^2d\sigma\ge\delta/(2\pi)
\int_{\mathbb{T}}|\phi_n|^2d\theta
\]
so (\ref{8}) follows from
\[
\|\phi_n\|_{L^2(d\theta)}^2=\left\|\sum_{j=0}^n
c_jz^j\right\|_{L^2(d\theta)}^2=2\pi\sum_{j=0}^n |c_j|^2\leq
2\pi/\delta
\]
and Cauchy-Schwarz
\[
\|\phi_n\|_\infty\leq (n+1)^{1/2}\sqrt{\sum_{j=0}^n |c_j|^2}
\]
\end{proof}
{\bf Remark.} Notice that all we used in the proof is the normalization
$\|\phi_n\|_{2,\sigma}=1$ and the Steklov's condition on the measure.
The problem, though, is whether the orthogonality leads to further restrictions
on the size. \smallskip

 The purpose of the current paper is to get rid of the logarithmic
factor in the denominator in (\ref{7}) and thus prove the optimal
result.\bigskip

One can consider the monic orthogonal polynomials
$\Phi_n(z,\mu)=z^n+\ldots$ and the Schur parameters $\gamma_n$ so
that
\[
\phi_n(z,\mu)=\frac{\Phi_n(z,\mu)}{\|\Phi_n\|_\mu}
\]
and $\Phi_n(0,\mu)=-\overline{\gamma}_{n-1}$. If $
\rho_n=(1-|\gamma_n|^2)^{1/2} $ then
\[
\Phi_n(z,\mu)=\phi_n(z,\mu)\Bigl(\rho_0\cdot\ldots\cdot
\rho_{n-1}\Bigr)
\]
The Szeg\H{o} formula \cite{sim1} yields
\[
\exp\left(\frac{1}{4\pi}\int_{-\pi}^\pi\log (2\pi\mu'(\theta))
d\theta\right)=\prod_{j\geq 0}\rho_j
\]
So, for $\mu\in S_\delta$, we have
\[
\delta^{1/2}\leq \prod_{n\geq 0}  \rho_n  \leq 1
\]
and therefore
\[
\delta^{1/2}  |\phi_n(z,\mu)|\leq |\Phi_n(z,\mu)|\leq
|\phi_n(z,\mu)|, \quad z\in \mathbb{C}
\]
for any $\mu\in S_\delta$. Thus, we have
\[
\delta^{1/2}M_{n,\delta}\leq \sup_{\mu\in S_\delta} |\Phi_n(z,\mu)|
\leq M_{n,\delta}
\]
and for fixed $\delta$ the variational problems for orthonormal and
monic orthogonal polynomials are equivalent.\bigskip

The structure of the paper is as follows. The Section 2 contains
several auxiliary results: existence of the optimal measure and its
property along with asymptotics of $M_{n,\delta}$ in the small
$\delta$ regime. In Section 3, we obtain the main result of the
paper, i.e., we prove the lower bound $M_{n,\delta}>C(\delta) \sqrt
n$ for fixed $\delta$ and large $n$. The application to the
orthogonal polynomials entropy is presented in the Section 4. The
Appendixes contain some auxiliary results.
\smallskip

The proofs by Rakhmanov were based on the following formula for the
orthogonal polynomial that one gets after adding several point
masses to ``background" measure at the particular locations on the
circle (see  \cite{3}).

\begin{lemma}Let $\mu$ be a positive measure on $\mathbb{T}$,
$\Phi_n(z,\mu)$-- the corresponding monic orthogonal polynomials and
\[
K_n(\xi,z,\mu)=\sum_{l=0}^n \overline{\phi_j}(\xi,\mu)\phi_j(z,\mu)
\]
is the Christoffel-Darboux kernel, i.e.
\[
P(\xi)=\langle P(z),K_n(\xi,z,\mu)\rangle,\quad \deg P\leq n
\]

 Then, if $\xi_j\in\mathbb{T},
j=1,\ldots m, m\leq n$ are chosen such that
\begin{equation}\label{uslo-k}
K_{n-1}(\xi_j,\xi_l,\mu)=0, j\neq l
\end{equation}
 then
\begin{equation}
\Phi_n(z,\eta)=\Phi_n(z,\mu)-\sum_{k=1}^m
\frac{m_k\Phi_n(\xi_k,\mu)}{1+m_kK_{n-1}(\xi_k,\xi_k,\mu)}K_{n-1}(\xi_k,z,\mu)
\end{equation}
where
\[
\eta=\mu+\sum_{k=1}^m m_k\delta(\theta-\theta_k), \quad
z_k=e^{i\theta_k}, \quad m_k\geq 0
\]
\end{lemma}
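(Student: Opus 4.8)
The plan is to check directly that the right-hand side, which I denote $Q_n(z)$, coincides with the monic orthogonal polynomial $\Phi_n(z,\eta)$. Since $\Phi_n(z,\mu)$ is monic of degree $n$ and each $K_{n-1}(\xi_k,z,\mu)$ is a polynomial in $z$ of degree at most $n-1$, $Q_n$ is monic of degree $n$; moreover the denominators $1+m_kK_{n-1}(\xi_k,\xi_k,\mu)$ are nonzero because $m_k\ge 0$ and $K_{n-1}(\xi_k,\xi_k,\mu)\ge|\phi_0(\xi_k,\mu)|^2>0$. Because $\eta$ still has infinite support, $\Phi_n(\cdot,\eta)$ is well defined, and by uniqueness of monic orthogonal polynomials it is enough to prove $\int_{\mathbb T}Q_n(z)\overline{P(z)}\,d\eta(z)=0$ for every polynomial $P$ with $\deg P\le n-1$.

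First I would split $\eta$ into the background part $\mu$ and the atomic part $\sum_k m_k\delta_{z_k}$ and evaluate the two contributions separately. For the $\mu$-part, $\int\Phi_n(z,\mu)\overline{P(z)}\,d\mu=0$ by orthogonality, and conjugating the reproducing identity $P(\xi_k)=\int P(z)\overline{K_{n-1}(\xi_k,z,\mu)}\,d\mu(z)$ gives $\int K_{n-1}(\xi_k,z,\mu)\overline{P(z)}\,d\mu(z)=\overline{P(\xi_k)}$; hence the $\mu$-part of $\int Q_n\overline P\,d\eta$ equals $-\sum_k\frac{m_k\Phi_n(\xi_k,\mu)}{1+m_kK_{n-1}(\xi_k,\xi_k,\mu)}\,\overline{P(\xi_k)}$. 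For the atomic part I would compute $Q_n$ at the nodes: using the hypothesis $K_{n-1}(\xi_k,\xi_j,\mu)=0$ for $k\ne j$, the sum defining $Q_n(\xi_j)$ collapses to its $k=j$ term and telescopes to $Q_n(\xi_j)=\Phi_n(\xi_j,\mu)\big/\bigl(1+m_jK_{n-1}(\xi_j,\xi_j,\mu)\bigr)$ (recalling $\xi_j=z_j$), so that $\sum_j m_jQ_n(z_j)\overline{P(z_j)}=\sum_j\frac{m_j\Phi_n(\xi_j,\mu)}{1+m_jK_{n-1}(\xi_j,\xi_j,\mu)}\,\overline{P(\xi_j)}$. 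This cancels the $\mu$-part exactly, giving $\int Q_n\overline P\,d\eta=0$ and therefore $Q_n=\Phi_n(\cdot,\eta)$.

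No serious obstacle is expected: the assertion is an algebraic identity verified by a short computation. The only points requiring care are the bookkeeping of complex conjugates in the reproducing-kernel step, and making transparent where the orthogonality condition $K_{n-1}(\xi_j,\xi_l,\mu)=0$ is used — it is precisely what makes the values $Q_n(\xi_j)$ diagonal, so that the atomic contribution cancels the background one term by term.
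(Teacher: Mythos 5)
Your verification is correct, and it is essentially the argument the paper itself uses: the lemma is stated with a reference to Rakhmanov, but the paper's proof of the one--point--mass Geronimus formula (\ref{insert}) proceeds exactly the same way — observe that the right-hand side is monic of degree $n$ and check orthogonality $\langle {\rm r.h.s.}, z^j\rangle_\eta=0$, $j=0,\dots,n-1$, which is what your splitting into the $\mu$-part and the atomic part, together with the reproducing property of $K_{n-1}$ and the hypothesis $K_{n-1}(\xi_j,\xi_l,\mu)=0$ for $j\neq l$, accomplishes. No gaps; the bookkeeping of conjugates and the collapse of $Q_n(\xi_j)$ to its diagonal term are handled correctly.
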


The limitation that $\xi_j$ must be the roots of $K$ is quite
restrictive and the direct application of this formula with
background $d\mu=d\theta$ yields only logarithmic growth at best as
was showed in \cite{REU}.

We will use completely different approach. First, we will rewrite
the Steklov condition in the convenient form as some conditions that
involve Herglotz function and a polynomial. Then, we will present
this function and the polynomial and show that they satisfy the
necessary conditions. This allows us to have a good control on the
size of the polynomial itself and on the structure of the measure of
orthogonality.

Some notation used in the paper: the Cauchy kernel for the unit
circle is denoted by $C(z,\xi)$, i.e.
\[
C(z,\xi)= \frac{\xi+z}{\xi-z}, \quad \xi\in \mathbb{T}
\]
If the function is analytic in $\mathbb{D}$ and has a nonnegative real part there, then we will call it Caratheodory function. Given two positive functions $F_1$ and $F_2$ defined on $\cal{D}$, we write
$F_1\lesssim F_2$ if there is a constant $C$ (that might depend only on the fixed parameters) such that
\[
F_1<CF_2
\]
on $\cal{D}$. We write $F_1\sim F_2$ if
\[
F_1\lesssim F_2\lesssim F_1
\]
\bigskip

\section{Structure of the extremal measure and solution of the problem in the small $\delta$ regime}

In this section, we first address the problem of the existence of
maximizers, i.e., $\mu^*_n\in S_\delta$ for which
\begin{equation}\label{extre}
M_{n,\delta}=|\phi_n(1;\mu^*_n)|
\end{equation}
We will prove that these extremizers exist and will study their
properties.

\begin{theorem}
There are $\mu^*_n\in S_\delta$ for which (\ref{extre}) holds.
\end{theorem}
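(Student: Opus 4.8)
The plan is to obtain $\mu^*_n$ as a weak-$*$ limit of a maximizing sequence and to argue that the Steklov constraint and the functional $\mu\mapsto|\phi_n(1;\mu)|$ behave well under such limits. First I would fix $n$ and take a sequence $\mu^{(k)}\in S_\delta$ with $|\phi_n(1;\mu^{(k)})|\to M_{n,\delta}$. Since each $\mu^{(k)}$ is a probability measure on the compact set $\mathbb{T}$, by Helly's selection theorem (weak-$*$ compactness of probability measures) we may pass to a subsequence with $\mu^{(k)}\to\mu^*$ weak-$*$, and $\mu^*$ is again a probability measure. The first thing to check is that $\mu^*\in S_\delta$: write $\mu^{(k)}=\frac{\delta}{2\pi}d\theta+\nu^{(k)}$ with $\nu^{(k)}\geq 0$ (this uses precisely that $(\mu^{(k)})'\geq\delta/(2\pi)$ at a.e.\ point, so the Lebesgue-decomposition absolutely continuous part dominates $\frac{\delta}{2\pi}d\theta$ and the singular part is nonnegative). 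The measures $\nu^{(k)}$ have total mass $1-\delta$, so along a further subsequence $\nu^{(k)}\to\nu^*\geq 0$ weak-$*$, whence $\mu^*=\frac{\delta}{2\pi}d\theta+\nu^*$, which shows $(\mu^*)'\geq\delta/(2\pi)$ a.e., i.e.\ $\mu^*\in S_\delta$.

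Next I would show $|\phi_n(1;\mu^{(k)})|\to|\phi_n(1;\mu^*)|$, which then forces $|\phi_n(1;\mu^*)|=M_{n,\delta}$ and finishes the proof. The orthonormal polynomial $\phi_n(z;\mu)$ is built from the moments $c_j(\mu)=\int_{\mathbb{T}}e^{-ij\theta}d\mu(\theta)$, $j=0,\dots,n$, via the Gram matrix: $\phi_n$ is (up to a positive normalizing constant) the determinant
\[
D_n(z;\mu)=\det\begin{pmatrix} c_0 & c_{-1} & \cdots & c_{-n}\\ c_1 & c_0 & \cdots & c_{-n+1}\\ \vdots & & & \vdots\\ c_{n-1} & c_{n-2} & \cdots & c_{-1}\\ 1 & z & \cdots & z^n\end{pmatrix},
\]
and the normalization is $\|\Phi_n\|_{\mu}^2=\det T_{n+1}(\mu)/\det T_n(\mu)$ where $T_m(\mu)$ is the $m\times m$ Toeplitz moment matrix. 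Since weak-$*$ convergence gives $c_j(\mu^{(k)})\to c_j(\mu^*)$ for each fixed $j$, all these determinants converge. The only point requiring care is that $\det T_m(\mu)$ stays bounded away from $0$ uniformly in $k$: but this is exactly where the Steklov condition pays off again, since $\mu^{(k)}\geq\frac{\delta}{2\pi}d\theta$ implies $\det T_m(\mu^{(k)})\geq\det T_m(\frac{\delta}{2\pi}d\theta)=(\delta/(2\pi))^m\cdot(2\pi)^m\cdot\text{(const)}>0$, uniformly in $k$ (Lebesgue measure has diagonal Toeplitz matrix). Hence $\phi_n(1;\mu^{(k)})\to\phi_n(1;\mu^*)$, as desired.

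The main obstacle, and the only subtle point, is the continuity of $\mu\mapsto\phi_n(1;\mu)$ on $S_\delta$ with respect to weak-$*$ convergence; everything else is soft compactness. Pointwise convergence of a finite list of moments is automatic, but one must know the map moments $\mapsto$ orthonormal polynomial is continuous there, which is equivalent to the nonvanishing of $\det T_{n+1}$, and this is guaranteed on $S_\delta$ by the uniform lower bound above. An alternative, cleaner route that avoids determinant bookkeeping is to note that for $\mu\in S_\delta$ one has $\|\phi_n(\cdot;\mu)\|_{L^2(d\theta)}^2\leq 2\pi/\delta$ (from the Lemma's proof), so the coefficient vectors $(c_0,\dots,c_n)$ of $\phi_n(\cdot;\mu^{(k)})$ lie in a fixed ball of $\mathbb{C}^{n+1}$; extract a convergent subsequence of coefficient vectors, call the limit polynomial $p(z)=\sum_{j=0}^n a_j z^j$, and then check using $c_j(\mu^{(k)})\to c_j(\mu^*)$ that $p$ has the two defining properties of $\phi_n(\cdot;\mu^*)$, namely $\langle p, z^i\rangle_{\mu^*}=0$ for $0\leq i<n$, $\langle p,p\rangle_{\mu^*}=1$, and leading coefficient $a_n>0$; by uniqueness of the orthonormal polynomial, $p=\phi_n(\cdot;\mu^*)$, and in particular $p(1)=\lim\phi_n(1;\mu^{(k)})$, giving $|\phi_n(1;\mu^*)|=M_{n,\delta}$.
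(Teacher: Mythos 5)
Your proof is correct and follows essentially the same route as the paper: take a maximizing sequence, use weak-$*$ compactness, check that the Steklov condition survives the limit, and conclude via convergence of the finitely many moments that determine $\phi_n(1;\cdot)$. The only differences are cosmetic --- you verify $\mu^*\in S_\delta$ through the decomposition $\mu=\frac{\delta}{2\pi}\,d\theta+\nu$ with $\nu\geq 0$ where the paper tests intervals, and you make explicit the continuity of the map from moments to $\phi_n(1)$ (via the uniform lower bound $\det T_m\geq \det T_m\bigl(\frac{\delta}{2\pi}\,d\theta\bigr)>0$, or the alternative coefficient-compactness argument), a step the paper states without elaboration.
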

\begin{proof}
Suppose $\mu_k\in S_\delta$ is the sequence which yields the $\sup$,
i.e.
\[
|\phi_n(1;\mu_k)|\to M_{n,\delta}, \quad k\to\infty
\]
Since the unit ball is weak-($\ast$) compact, we can choose
$\mu_{k_j}\to \mu^*$ and this convergence is weak-($\ast$), i.e.
\[
\int fd\mu_{k_j}\to \int fd\mu^*, \quad j\to\infty
\]
for any $f\in C(\mathbb{T})$. In particular, $\mu^*$ is a
probability measure. Moreover, for any interval $(a,b)\subseteq
(-\pi,\pi]$, we have (assuming, e.g., that the endpoints $a$ and $b$
are not atoms for $\mu^*$):
\[
\int_{[a,b]} d\mu^*\geq \delta(b-a)/(2\pi)
\]
since each $\mu_{k_j}\in S_\delta$. This implies
${\mu^*}'>\delta/(2\pi)$ a.e. on $\mathbb{T}$. The moments of
$\mu_{k_j}$ will converge to the moments of $\mu^*$ and therefore
\[
\phi_n(1;\mu_{k_j})\to\phi_n(1;\mu^*)
\]
Therefore, $\mu^*\in S_\delta$ and $|\phi_n(1;\mu^*)|=M_{n,\delta}$.
\end{proof}

This argument gives existence of an extremizer. Although we do not
know whether it is unique, we can prove that every $d\mu^*$ must
have a very special form. We start with the following well-known
result attributed to Geronimus:

\begin{lemma}
Consider $\mu(t)=(1-t)\mu+t\delta(\theta)$ where $t\in (0,1)$. Then,
\begin{equation}\label{insert}
\Phi_n(z,\mu(t))=\Phi_n(z,\mu)-t\frac{\Phi_n(1,\mu)K_{n-1}(1,z,\mu)}{1-t+tK_{n-1}(1,1,\mu)}
\end{equation}
\end{lemma}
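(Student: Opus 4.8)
The plan is to verify the identity \eqref{insert} directly by checking that the right-hand side has all the defining properties of the monic orthogonal polynomial $\Phi_n(z,\mu(t))$, namely that it is monic of degree $n$ and orthogonal in $L^2(d\mu(t))$ to all powers $z^0,\dots,z^{n-1}$. Monicity is immediate: $K_{n-1}(1,z,\mu)$ has degree $n-1$ in $z$, so subtracting a multiple of it from the monic polynomial $\Phi_n(z,\mu)$ leaves the leading coefficient unchanged. Thus the whole content is the orthogonality, and by uniqueness of monic orthogonal polynomials this will finish the proof.

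First I would set $d\mu(t) = (1-t)\,d\mu + t\,\delta(\theta=0)$, so that for any polynomial $P$ of degree $\le n-1$,
\[
\langle \Phi_n(\cdot,\mu(t)),P\rangle_{\mu(t)} = (1-t)\langle \Phi_n(\cdot,\mu(t)),P\rangle_\mu + t\,\overline{P(1)}\,\Phi_n(1,\mu(t)),
\]
and similarly for the candidate on the right of \eqref{insert}. Plugging in the candidate $R(z) := \Phi_n(z,\mu) - t\,c\,K_{n-1}(1,z,\mu)$ with $c = \Phi_n(1,\mu)\big/\bigl(1-t+tK_{n-1}(1,1,\mu)\bigr)$ and using that $\langle \Phi_n(\cdot,\mu),P\rangle_\mu = 0$ together with the reproducing property $\langle P,K_{n-1}(1,\cdot,\mu)\rangle_\mu = P(1)$ (equivalently $\langle K_{n-1}(1,\cdot,\mu),P\rangle_\mu = \overline{P(1)}$, since the kernel is Hermitian), the $\mu$-part of $\langle R,P\rangle_{\mu(t)}$ collapses to $-(1-t)\,t\,\overline{c}\,\overline{P(1)}$. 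Here one must be a little careful with which argument of $K_{n-1}$ is conjugated; with the convention in the excerpt, $K_{n-1}(\xi,z,\mu)=\sum \overline{\phi_j(\xi)}\phi_j(z)$ is a polynomial in $z$ and anti-holomorphic in $\xi$, so reproduces values at $z=1$ after pairing in the $z$ variable.

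Next I would compute the point-mass contribution $t\,\overline{P(1)}\,R(1)$. Evaluating the candidate at $z=1$ gives
\[
R(1) = \Phi_n(1,\mu) - t\,c\,K_{n-1}(1,1,\mu) = \Phi_n(1,\mu)\cdot\frac{1-t+tK_{n-1}(1,1,\mu) - tK_{n-1}(1,1,\mu)}{1-t+tK_{n-1}(1,1,\mu)} = (1-t)\,c.
\]
Therefore $\langle R,P\rangle_{\mu(t)} = -(1-t)\,t\,\overline{c}\,\overline{P(1)} + t\,(1-t)\,\overline{c}\,\overline{P(1)} = 0$ for every $P$ of degree $\le n-1$ — wait, I need $R(1)=(1-t)c$ real-conjugated correctly; since the scalar $(1-t)$ is real the two terms cancel as written. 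Hence $R$ is monic of degree $n$ and $\mu(t)$-orthogonal to $\{1,z,\dots,z^{n-1}\}$, so $R(z)=\Phi_n(z,\mu(t))$, which is exactly \eqref{insert}.

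The only genuine subtlety — and the step I would be most careful about — is bookkeeping the complex conjugates: getting the reproducing identity in the correct variable of $K_{n-1}$, and confirming that the factor $\Phi_n(1,\mu)$ can be pulled out of $R(1)$ so that the point-mass term precisely cancels the residual $\mu$-term. Everything else is formal linear algebra in the $(n+1)$-dimensional space of polynomials of degree $\le n$, and the denominator $1-t+tK_{n-1}(1,1,\mu)$ is strictly positive for $t\in(0,1)$ since $K_{n-1}(1,1,\mu)\ge 0$, so no division-by-zero issue arises.
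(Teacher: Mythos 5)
Your proof is correct and follows essentially the same route as the paper: the paper likewise notes the right-hand side is monic of degree $n$ and then checks $\langle \mathrm{r.h.s.}, z^j\rangle_{\mu(t)}=0$ for $j=0,\dots,n-1$, which you simply carry out in detail via the reproducing property and the evaluation $R(1)=(1-t)c$. The only blemish is cosmetic: with the paper's convention (kernel reproducing in the $z$-variable, inner product linear in the first slot) the coefficient in both surviving terms is $c$ rather than $\overline{c}$, but since the same factor appears in both terms the cancellation, and hence the argument, is unaffected.
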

\begin{proof}
Notice that the r.h.s. is a monic polynomial of degree $n$. Then,
one checks that
\[
\langle {\rm r.h.s.},z^j\rangle_{\mu(t)}=0, \quad j=0,\ldots, n-1
\]
which yields orthogonality.
\end{proof}
The proof of the next statement can be found in (\cite{sim1},
theorem 10.13.3, formula (10.13.17))
\begin{lemma}
The following formula holds true
\[
\|\Phi_n(z,\mu(t))\|^2_{\mu(t)}=\|\Phi_n(z,\mu)\|^2_{\mu}
(1-t)\frac{1-t+tK_n(1,1)}{1-t+tK_{n-1}(1,1)}
\]
\end{lemma}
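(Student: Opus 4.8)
The plan is to insert the Geronimus formula from the preceding lemma into the quadratic form defining $\|\Phi_n(z,\mu(t))\|_{\mu(t)}^2$ and simplify. Abbreviate $N=\|\Phi_n(z,\mu)\|_\mu^2$, $A=\Phi_n(1,\mu)$, $K_{n-1}=K_{n-1}(1,1,\mu)$, $K_n=K_n(1,1,\mu)$, and $c=tA/(1-t+tK_{n-1})$, so that the preceding lemma reads $\Phi_n(z,\mu(t))=\Phi_n(z,\mu)-c\,K_{n-1}(1,z,\mu)$. Since $\mu(t)=(1-t)\mu+t\delta(\theta)$ with the point mass placed at $z=1$, we have
\[
\|\Phi_n(z,\mu(t))\|_{\mu(t)}^2=(1-t)\,\|\Phi_n(z,\mu(t))\|_\mu^2+t\,|\Phi_n(1,\mu(t))|^2 ,
\]
and it suffices to evaluate the two terms on the right.

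First, the point-evaluation term: using $K_{n-1}(1,1,\mu)=K_{n-1}$ one gets $\Phi_n(1,\mu(t))=A-cK_{n-1}=(1-t)A/(1-t+tK_{n-1})$, hence $|\Phi_n(1,\mu(t))|^2=(1-t)^2|A|^2/(1-t+tK_{n-1})^2$. For the $L^2(\mu)$-term, expand the square:
\[
\|\Phi_n(z,\mu)-c\,K_{n-1}(1,z,\mu)\|_\mu^2=N-2\,\Re\big(\bar c\,\langle\Phi_n(z,\mu),K_{n-1}(1,z,\mu)\rangle_\mu\big)+|c|^2\,\|K_{n-1}(1,z,\mu)\|_\mu^2 .
\]
The middle term vanishes, because $K_{n-1}(1,z,\mu)$ is a polynomial in $z$ of degree at most $n-1$ while $\Phi_n(\cdot,\mu)$ is $\mu$-orthogonal to every such polynomial; and the reproducing property of the Christoffel--Darboux kernel (applied to $P(z)=K_{n-1}(1,z,\mu)$) gives $\|K_{n-1}(1,z,\mu)\|_\mu^2=K_{n-1}(1,1,\mu)=K_{n-1}$. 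Hence $\|\Phi_n(z,\mu(t))\|_\mu^2=N+|c|^2K_{n-1}$.

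It remains to remove $|A|^2$. Since $\phi_n(z,\mu)=\Phi_n(z,\mu)/\sqrt N$, we have $|\phi_n(1,\mu)|^2=|A|^2/N$, so $K_n=K_{n-1}+|\phi_n(1,\mu)|^2$ yields $|A|^2=N(K_n-K_{n-1})$. Substituting $|c|^2=t^2|A|^2/(1-t+tK_{n-1})^2$ into the two computed pieces and collecting the terms proportional to $|A|^2$, the factor $t(1-t)^2+(1-t)t^2K_{n-1}=t(1-t)(1-t+tK_{n-1})$ cancels one power of the denominator and one obtains
\[
\|\Phi_n(z,\mu(t))\|_{\mu(t)}^2=(1-t)N+\frac{t(1-t)|A|^2}{1-t+tK_{n-1}}=(1-t)N\left(1+\frac{t(K_n-K_{n-1})}{1-t+tK_{n-1}}\right)=(1-t)N\,\frac{1-t+tK_n}{1-t+tK_{n-1}} ,
\]
which is exactly the claimed identity.

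There is no genuine difficulty here beyond careful bookkeeping; the only two facts that must be \emph{used} rather than merely computed are the vanishing of the cross term (orthogonality together with the degree bound $\deg_z K_{n-1}(1,z,\mu)\le n-1$) and the kernel identity $|\Phi_n(1,\mu)|^2=\|\Phi_n(z,\mu)\|_\mu^2\,(K_n(1,1,\mu)-K_{n-1}(1,1,\mu))$. Once these are in place the algebra telescopes as above, so I expect the main obstacle to be purely organizational: keeping the inner-product conjugation convention and the $(1-t)$/$t$ weights straight while simplifying.
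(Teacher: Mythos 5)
Your computation is correct and self-contained. The paper itself does not prove this lemma --- it simply cites Simon's book (Theorem 10.13.3, formula (10.13.17)) --- so your argument supplies an actual proof where the paper has none. The structure is the natural one: split $\|\cdot\|^2_{\mu(t)}$ into $(1-t)\|\cdot\|^2_\mu + t|\cdot|^2_{z=1}$, insert the Geronimus formula from the preceding lemma, kill the cross term by orthogonality and degree count, evaluate $\|K_{n-1}(1,\cdot,\mu)\|_\mu^2=K_{n-1}(1,1,\mu)$ by the reproducing property, and convert $|\Phi_n(1,\mu)|^2$ to $\|\Phi_n\|^2_\mu\,(K_n-K_{n-1})$ via the definition of the normalized polynomial. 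The final algebraic collapse $t(1-t)^2+(1-t)t^2K_{n-1}=t(1-t)(1-t+tK_{n-1})$ is verified correctly, and the substitution yields exactly the claimed identity. This is precisely the argument one would expect behind Simon's formula, so I have no objections.
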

These two results give the following expression
\[
|\phi_n(1,\mu(t))|^2=|\phi_n(1,\mu)|^2
\frac{1-t}{(1-t+tK_{n-1}(1,1,\mu))(1-t+tK_n(1,1,\mu))}
\]

We will need the following lemma later on
\begin{lemma}
The following formulas are true
\begin{equation}\label{drv1}
\left.\frac{d}{dt}|\Phi_n(1,\mu(t))|^2\right|_{t=0}=-2K_{n-1}(1,1,\mu)|\Phi_n(1,\mu)|^2<0
\end{equation}
\begin{equation}\label{drv2}
\left.\frac{d}{dt}|\phi_n(1,\mu(t))|^2\right|_{t=0}=|\phi_n(1,\mu)|^2(1-K_n(1,1,\mu)-K_{n-1}(1,1,\mu))
\end{equation}
\end{lemma}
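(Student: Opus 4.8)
The statement to be proved packages two derivative identities, so I would derive each by differentiating the closed-form expression for $|\phi_n(1,\mu(t))|^2$ that was just recorded, namely
\[
|\phi_n(1,\mu(t))|^2=|\phi_n(1,\mu)|^2\,\frac{1-t}{(1-t+tK_{n-1}(1,1,\mu))(1-t+tK_n(1,1,\mu))},
\]
together with the insertion formula (\ref{insert}) and the norm formula from the preceding lemma. For the $\phi_n$ identity (\ref{drv2}) this is the quickest route: write the right-hand side as $|\phi_n(1,\mu)|^2\,g(t)$ with $g(t)=(1-t)/\big[(1-t+tK_{n-1})(1-t+tK_n)\big]$, note $g(0)=1$, and compute $g'(0)$ by the logarithmic-derivative trick, $g'(0)=\big(\log g\big)'(0)=-1-(K_{n-1}-1)-(K_n-1)=1-K_n(1,1,\mu)-K_{n-1}(1,1,\mu)$, which is exactly (\ref{drv2}). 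One should record the abbreviations $K_{n-1}=K_{n-1}(1,1,\mu)$, $K_n=K_n(1,1,\mu)$ at the start to keep the display lines short.

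For (\ref{drv1}) I would instead differentiate $|\Phi_n(1,\mu(t))|^2$ directly from (\ref{insert}) evaluated at $z=1$: since $K_{n-1}(1,1,\mu)$ is real and positive,
\[
\Phi_n(1,\mu(t))=\Phi_n(1,\mu)\Big(1-\frac{t\,K_{n-1}(1,1,\mu)}{1-t+tK_{n-1}(1,1,\mu)}\Big)=\Phi_n(1,\mu)\,\frac{1-t}{1-t+tK_{n-1}(1,1,\mu)},
\]
so $|\Phi_n(1,\mu(t))|^2=|\Phi_n(1,\mu)|^2\,h(t)^2$ with $h(t)=(1-t)/(1-t+tK_{n-1})$. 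Then $h(0)=1$ and $h'(0)=-1-(K_{n-1}-1)=-K_{n-1}$, hence $\frac{d}{dt}h(t)^2\big|_{t=0}=2h(0)h'(0)=-2K_{n-1}(1,1,\mu)$, giving (\ref{drv1}). The strict negativity is immediate because $K_{n-1}(1,1,\mu)=\sum_{j=0}^{n-1}|\phi_j(1,\mu)|^2>0$ (the $j=0$ term alone is positive) and $|\Phi_n(1,\mu)|^2>0$, provided $\Phi_n(1,\mu)\neq0$; in the degenerate case $\Phi_n(1,\mu)=0$ both sides are zero and the formula still holds as an identity, though one may simply assume $\Phi_n(1,\mu)\neq0$ since that is the only case of interest for the extremal problem.

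**Main obstacle.** There is no serious analytic difficulty here — the identities are exact rational-function computations valid for all $t$ in a neighbourhood of $0$, so differentiability is free and no limiting argument is needed. The only thing to be careful about is bookkeeping: one must make sure the norm formula from the previous lemma is combined with (\ref{insert}) consistently so that the stated closed form for $|\phi_n(1,\mu(t))|^2$ is really the one being differentiated, and one must use that $K_{n-1}(1,1,\mu)$ and $K_n(1,1,\mu)$ are real (they are sums of squared moduli), so that there are no stray conjugates when differentiating. I would therefore present the proof as two short paragraphs, one per identity, each consisting of writing the relevant quantity as $|\Phi_n(1,\mu)|^2$ or $|\phi_n(1,\mu)|^2$ times an explicit rational function of $t$ equal to $1$ at $t=0$, and then reading off the derivative at $t=0$ via the logarithmic derivative.
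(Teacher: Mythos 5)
Your proof is correct and takes essentially the same route as the paper: both start from the rational closed form $|\Phi_n(1,\mu(t))|^2 = |\Phi_n(1,\mu)|^2\,(1-t)^2/(1-t+tK_{n-1})^2$ obtained from the insertion formula, and the corresponding closed form for $|\phi_n(1,\mu(t))|^2$, and then differentiate at $t=0$. The only difference is presentational — the paper simply states the closed form for $\Phi_n$ and says "taking the derivative gives..." while you spell out the intermediate algebra and use the logarithmic-derivative shortcut for the $\phi_n$ case.
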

\begin{proof}
We have
\[
|\Phi_n(1,\mu(t))|^2=\frac{(1-t)^2|\Phi_n(1,\mu)|^2}{(1-t+tK_{n-1}(1,1,\mu))^2}
\]
Taking derivative at zero gives (\ref{drv1}). One gets (\ref{drv2})
similarly.
\end{proof}
\begin{corollary} If $n>n_0(\delta)\gg 1$ and $\mu^*_n$ is an extremizer,
then
\begin{equation}\label{minus}
\left.\frac{d}{dt}|\phi_n(1,\mu_n^*(t))|^2\right|_{t=0}<0
\end{equation}
\end{corollary}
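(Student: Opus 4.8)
The plan is to use formula \eqref{drv2} from the preceding lemma, which tells us that
\[
\left.\frac{d}{dt}|\phi_n(1,\mu_n^*(t))|^2\right|_{t=0}=|\phi_n(1,\mu_n^*)|^2\bigl(1-K_n(1,1,\mu_n^*)-K_{n-1}(1,1,\mu_n^*)\bigr),
\]
so the sign of the derivative is the sign of $1-K_n(1,1,\mu_n^*)-K_{n-1}(1,1,\mu_n^*)$. Since $|\phi_n(1,\mu_n^*)|^2>0$, it suffices to show that for $n$ large (depending on $\delta$) one has $K_n(1,1,\mu_n^*)+K_{n-1}(1,1,\mu_n^*)>1$, equivalently $K_{n-1}(1,1,\mu_n^*)>1$ will already do since $K_n\ge K_{n-1}\ge 0$.

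First I would lower-bound $K_{n-1}(1,1,\mu)$ for \emph{any} $\mu\in S_\delta$. The Christoffel–Darboux kernel has the extremal characterization
\[
K_{n-1}(1,1,\mu)^{-1}=\min\Bigl\{\int_{\mathbb{T}}|P|^2\,d\mu:\ \deg P\le n-1,\ P(1)=1\Bigr\}.
\]
Taking the normalized test polynomial $P(z)=\frac1n\sum_{j=0}^{n-1}z^j$ (the Fejér-type kernel with $P(1)=1$), one computes $\int_{\mathbb{T}}|P|^2\,d\theta = 2\pi/n$, and then the Steklov bound $d\mu\ge(\delta/2\pi)\,d\theta$ only gives an \emph{upper} bound on $\int|P|^2d\mu$ if we also bound $\mu$ from above — which we cannot in general. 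So instead I would use a polynomial whose mass is concentrated near $z=1$: for instance a power of the Fejér kernel, $P(z)=c\bigl(\frac{1}{m}\sum_{j=0}^{m-1}z^j\bigr)^{2k}$ with $2k(m-1)\le n-1$ and $c$ chosen so $P(1)=1$. For such $P$, $|P(e^{i\theta})|$ decays rapidly away from $\theta=0$, so $\int_{\mathbb{T}}|P|^2\,d\mu$ is comparable to $\int_{\mathbb{T}}|P|^2\,d\theta$ up to a constant depending only on the total mass of $\mu$ on $\mathbb{T}$ (which is $1$) plus the contribution near $0$ where $\mu'$ could be large — but there $|P|^2$ is $O(1)$ while the $\theta$-measure of that window is $O(1/m)$... this requires care. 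The cleanest route: since $\mu$ is a probability measure, $\int_{\mathbb{T}}|P|^2\,d\mu\le \|P\|_\infty^2=1$, hence $K_{n-1}(1,1,\mu)\ge 1$ trivially — but that gives only $\ge$, not strict inequality with room to spare.

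To get the strict inequality robustly, I would argue as follows. Pick $P(z)=\bigl(\frac1m\sum_{j=0}^{m-1}z^j\bigr)^{2}$ with $2(m-1)\le n-1$, so $m\sim n/2$. Then $P(1)=1$, $\|P\|_\infty=1$, but $\int_{\mathbb{T}}|P(e^{i\theta})|^2\,d\mu<1$ strictly because $|P(e^{i\theta})|<1$ off $\theta=0$ and $\mu$ has no atom at $1$ once we know (from Theorem 2.1's proof, where ${\mu^*}'\ge\delta/(2\pi)$ a.e.) — actually more simply, $\mu\in S_\delta$ forces $\mu$ to have an absolutely continuous part of mass $\ge\delta$, spread over all of $\mathbb{T}$, so $\int_{\mathbb{T}}|P|^2\,d\mu\le (1-\delta)+\frac{\delta}{2\pi}\int_{\mathbb{T}}|P|^2\,d\theta = (1-\delta)+\delta\,O(1/m)$. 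Hence $K_{n-1}(1,1,\mu)\ge\bigl((1-\delta)+\delta\,O(1/m)\bigr)^{-1}$, which exceeds $1$ by a fixed amount $\gtrsim\delta$ once $n$ (hence $m$) is large enough that the $O(1/m)$ term is, say, $<1/2$. This $n_0(\delta)$ is exactly the threshold in the statement. Then $K_n(1,1,\mu_n^*)+K_{n-1}(1,1,\mu_n^*)\ge 2K_{n-1}(1,1,\mu_n^*)>2>1$, so the bracket in \eqref{drv2} is negative, and \eqref{minus} follows.

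The main obstacle is handling the concentration-of-mass estimate cleanly: bounding $\int_{\mathbb{T}}|P|^2\,d\mu$ from above for an arbitrary $\mu\in S_\delta$ when $\mu$ is allowed to be huge near $z=1$. The key realization that saves the argument is that we do not need to control $\mu$ near $1$ at all — we only need $\|P\|_\infty\le 1$ (which kills any local concentration of $\mu$, since $|P|^2\le 1$ everywhere) together with the fact that the \emph{average} of $|P|^2$ against $d\theta$ is small, applied to the guaranteed a.c. part of $\mu$ of mass $\delta$. Writing $d\mu = \frac{\delta}{2\pi}d\theta + d\nu$ with $\nu\ge 0$ a positive measure of total mass $1-\delta$, we get $\int|P|^2d\mu\le\frac{\delta}{2\pi}\int|P|^2d\theta+(1-\delta)\|P\|_\infty^2$ with no further input needed, and this is entirely elementary.
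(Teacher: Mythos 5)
Your argument is correct, but it is genuinely different from the paper's. The paper's proof is a one-liner resting on Rakhmanov's earlier estimate \eqref{7}: since $|\phi_n(1,\mu_n^*)|=M_{n,\delta}\gtrsim\sqrt{(n+1)/(\delta\log^3n)}\to\infty$, also $K_n(1,1,\mu_n^*)\ge|\phi_n(1,\mu_n^*)|^2\to\infty$, and \eqref{drv2} then gives \eqref{minus}. You instead prove a uniform quantitative bound over the whole class: using the Christoffel-function characterization $K_{n-1}(1,1,\mu)^{-1}=\min\{\int_{\mathbb{T}}|P|^2\,d\mu:\deg P\le n-1,\ P(1)=1\}$, the legitimate splitting $d\mu=\frac{\delta}{2\pi}d\theta+d\nu$ with $\nu\ge0$ of mass $1-\delta$ (valid precisely because $\mu'\ge\delta/(2\pi)$ a.e.), and the squared Ces\`aro/Fej\'er-type test polynomial with $P(1)=1$, $\|P\|_{L^\infty(\mathbb{T})}\le1$, $\int_{\mathbb{T}}|P|^2d\theta=O(1/m)$, you get $K_{n-1}(1,1,\mu)\ge\bigl((1-\delta)+O(\delta/m)\bigr)^{-1}>1$ for every $\mu\in S_\delta$ once $n>n_0(\delta)$; combined with $K_n\ge K_{n-1}$ and $|\phi_n(1,\mu_n^*)|^2>0$ this makes the bracket in \eqref{drv2} negative. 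All steps check out (in particular $|P|\le1$ on $\mathbb{T}$ and $\int|P|^2d\theta\sim m^{-1}$ for the squared Dirichlet mean are correct). What your route buys: it is elementary and self-contained, avoids invoking the nontrivial inequality \eqref{7}, applies to every measure in $S_\delta$ rather than only to extremizers, and yields an explicit threshold $n_0(\delta)$; what the paper's route buys is brevity plus the stronger qualitative fact $K_n(1,1,\mu_n^*)\to\infty$ along extremizers. One further observation: given \eqref{drv2}, even less is needed, since $\phi_0\equiv1$ for a probability measure gives $K_{n-1}(1,1,\mu)\ge1$ and $K_n(1,1,\mu)\ge1+|\phi_n(1,\mu)|^2>1$ (note $\phi_n(1)\neq0$ because all zeros of $\phi_n$ lie in $\mathbb{D}$), so $1-K_n-K_{n-1}<0$ for every such $\mu$ and every $n\ge1$; your test-polynomial estimate delivers the extra margin $K_{n-1}>1+c(\delta)$, which is more than this corollary requires but is a genuinely stronger statement.
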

\begin{proof}For fixed $\delta$, we have a bound
\[
M_{n,\delta}>C\sqrt{\frac{n+1}{\delta \log^3 n}}
\]
which implies that $|\phi_n(1,\mu_n^*(t))|\to\infty$ as
$n\to\infty$. Consequently, $K_n(1,1,\mu_n^*(t))\to\infty$ and the
formula (\ref{drv2}) finishes the proof.
\end{proof}

 Suppose we have a positive measure $\mu$ and the moments
\[
s_j=\int e^{ij\theta}d\mu=s_j^R+is_j^I, \quad j=0,1,\ldots
\]
Then the following formula is well-known
\begin{eqnarray*}
\Phi_n(z)=D_{n-1}^{-1} \left|
\begin{array}{cccc}
s_0 & s_1 & \ldots& s_n\\
s_{-1} & s_0 & \ldots& s_{n-1}\\
\ldots& \ldots& \ldots& \ldots\\
s_{-(n-1)} & s_{-(n-2)} &\ldots&s_1 \\
1 & z& \ldots& z^n
\end{array}
\right|\,\\ D_n=\det T_n, \quad
 T_n=\left[
\begin{array}{cccc}
s_0 & s_1 & \ldots& s_n\\
s_{-1} & s_0 & \ldots& s_{n-1}\\
\ldots& \ldots& \ldots& \ldots\\
s_{-(n-1)} & s_{-(n-2)} &\ldots&s_1 \\
s_{-n} & s_{-(n-1)}& \ldots& s_0
\end{array}
\right]
\end{eqnarray*}
Therefore, the functions $F_{1(2)}$ given by
\[
F_1(s_0,s_1^R,\ldots,s_n^I)=|\Phi_n(1)|^2,
\quad F_2(s_0,s_1^R,\ldots,s_n^I)=|\phi_n(1)|^2
\]
are the smooth functions of the variables $\{s_j^R,s_j^I\},
j=0,\ldots, n$ wherever they are defined. Consider $\Omega_n=\{s:
T_n(s)>0\}$. Clearly, if $s\in \Omega_n$, then there is a family of
measures $\mu$ which have $s$ as the first $n$ moments (the solution
to the truncated trigonometric moments problem).

We will need the following
\begin{lemma}
The function $F_1(s)$ does not have stationary points on $\Omega_n$.
If $n>n_0(\delta)\gg 1$ and $\{s_j^*\}$ are the moments of the
extremizer $\mu_n^*$ then this $s^*$ is not a critical point for
$F_2(s)$.
\end{lemma}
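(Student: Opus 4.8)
The plan is to show that at any candidate critical point the constraint that the moment vector lies in $\Omega_n$ forces a contradiction with the extremal property, by differentiating along the one-parameter families of measures introduced in Lemma 2.3 and in the Geronimus-type insertion (Lemma 2.5). For the claim about $F_1$: if $s\in\Omega_n$ were a stationary point of $F_1$, then in particular the directional derivative of $F_1$ along any smooth curve of moment vectors through $s$ would vanish. Take the curve $t\mapsto$ (moments of $\mu(t)=(1-t)\mu+t\delta(\theta)$) where $\mu$ is any measure with first $n$ moments equal to $s$; this curve stays in $\Omega_n$ for small $t>0$ (and, by inserting a mass at a point and then subtracting, for small $t<0$ as well, as long as positivity of $T_n$ is preserved). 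Along this curve $F_1$ equals $|\Phi_n(1,\mu(t))|^2$, and formula (\ref{drv1}) gives
\[
\left.\frac{d}{dt}F_1\right|_{t=0}=-2K_{n-1}(1,1,\mu)|\Phi_n(1,\mu)|^2.
\]
Since $K_{n-1}(1,1,\mu)\ge |\phi_0(1,\mu)|^2=s_0^{-1}>0$ and $|\Phi_n(1,\mu)|>0$ (the monic polynomial cannot vanish identically), this derivative is strictly negative, so $s$ is not stationary. This disposes of the first assertion.

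For the second assertion the same idea applies but with a twist, since by (\ref{drv2}) the $t$-derivative of $F_2$ along the insertion curve at $t=0$ is $|\phi_n(1,\mu)|^2(1-K_n(1,1,\mu)-K_{n-1}(1,1,\mu))$, which is nonzero for $n>n_0(\delta)$ by Corollary 2.1 — but this by itself does not immediately say $s^*$ is not a critical point of $F_2$ as a function on the open set $\Omega_n$, because the insertion of a point mass at $\theta$ changes the moments in a specific direction, not an arbitrary one. So the argument must be: the curve $t\mapsto$ moments of $\mu_n^*(t)$ is a smooth curve in $\Omega_n$ passing through $s^*$ at $t=0$ with a well-defined nonzero velocity vector $v\in\mathbb R^{2n+1}$ (its components are $\bigl(0,\ (\cos\theta-s_1^R),\ (\sin\theta-s_1^I),\dots\bigr)$, coming from $\frac{d}{dt}\int e^{ij\theta}d\mu_n^*(t)=e^{ij\theta}-s_j^*$, and this is not the zero vector since $\mu_n^*$ is not a single point mass). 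If $s^*$ were a critical point of $F_2$, then $\nabla F_2(s^*)=0$, hence $\langle \nabla F_2(s^*),v\rangle=0$; but this inner product equals $\left.\frac{d}{dt}F_2\right|_{t=0}$ which, by (\ref{drv2}) and Corollary 2.1, is strictly negative. Contradiction.

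The main obstacle I expect is bookkeeping with the insertion curve: one must verify that $t\mapsto$ moments of $\mu(t)$ really does trace a smooth curve lying in $\Omega_n$ for $t$ in a two-sided neighborhood of $0$ (or at least a one-sided one, which already suffices for a strict-sign argument when combined with the smoothness of $F_1,F_2$ on $\Omega_n$, since a stationary point would force the one-sided derivative to vanish too), and that the velocity vector $v$ is genuinely nonzero — i.e., that the point of insertion $\theta$ can be chosen so that $(e^{i\theta}-s_1^*,e^{2i\theta}-s_2^*,\dots)\neq 0$, which fails only if $\mu_n^*=\delta(\theta)$, impossible for $\mu_n^*\in S_\delta$ with $\delta>0$. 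Everything else is a direct substitution into the already-proved formulas (\ref{drv1}) and (\ref{drv2}), together with the lower bound on $K_{n-1}(1,1,\mu)$ coming from the first term of the Christoffel–Darboux sum and the growth $K_n(1,1,\mu_n^*)\to\infty$ established in Corollary 2.1.
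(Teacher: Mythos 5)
Your argument is correct and is essentially the paper's own proof: differentiate $F_{1(2)}$ along the point-mass insertion curve $\mu(t)=(1-t)\mu+t\delta(\theta)$ (whose moments are linear, hence smooth, in $t$), and contradict stationarity with the strict sign of the derivative from (\ref{drv1}), respectively (\ref{minus}). Your extra worries about a two-sided neighborhood and a nonzero velocity vector are superfluous — stationarity alone forces the one-sided chain-rule derivative to vanish, which already contradicts the strictly negative value computed in (\ref{drv1}) and (\ref{drv2}) together with Corollary 2.1 — but they do no harm.
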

\begin{proof}
Indeed, suppose $\widehat s$ is a stationary point and $\widehat
\mu$ is one of the measures that generates it. Then, consider
$\mu(t)=(1-t)\widehat\mu+t\delta(\theta)$. The corresponding moments
$s_j(t)$ are linear functions in $t$. Therefore,
\[
g(t)=F_1(s_0(t),s_1^R(t),\ldots, s_n^I(t))
\]
is differentiable at $t=0$ and $g'(0)=0$ as $\widehat s$ is a
stationary point. This contradicts (\ref{drv1}). The argument for
$F_2$ is the same except that one gets the contradiction with
(\ref{minus}).
\end{proof}
{\bf Remark.} The proof actually shows that at least one of the
derivatives ${\partial F_{2}}/{\partial s_j^{R(I)}}$ is different
from zero and $j=1,2,\ldots n$. This is because the measure $\mu(t)$
has the same variation as $\mu$.
\begin{theorem}
If $\mu^*$ is a maximizer then it can be written in the following
form
\begin{equation}\label{form}
d\mu^*=\delta d\theta+\sum_{j=1}^N m_j\delta(\theta-\theta_j), \quad
1\leq N\leq n
\end{equation}
where $m_j\geq 0$ and $-\pi<\theta_1<\ldots<\theta_N\leq \pi$.
\end{theorem}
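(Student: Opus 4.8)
The plan is to view $F_2$ as a function on the truncated trigonometric moment body and to combine the non-stationarity of $F_2$ at the extremal moment vector, established in the preceding Lemma and Remark, with the classical description of the boundary of that body.

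First I would rewrite the Steklov condition in the form
\[
\mu\in S_\delta \quad\Longleftrightarrow\quad \mu=\tfrac{\delta}{2\pi}\,d\theta+\nu,\qquad \nu\ge 0,\ \ \nu(\mathbb{T})=1-\delta .
\]
Since the Fourier coefficients of $\tfrac{\delta}{2\pi}d\theta$ vanish in degrees $\ge 1$, the moments $s_1,\dots,s_n$ of $\mu$ and of $\nu$ coincide, while $T_n(\mu)=\delta I+T_n(\nu)\ge\delta I>0$; in particular, with $s_0\equiv 1$, every moment vector arising from $S_\delta$ lies in the open set $\Omega_n$ on which $F_2$ is smooth. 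Put
\[
\mathcal{K}=\bigl\{(\widehat\nu(1),\dots,\widehat\nu(n)):\ \nu\ge 0,\ \nu(\mathbb{T})=1-\delta\bigr\}\subset\mathbb{C}^n\cong\mathbb{R}^{2n},
\]
which is $(1-\delta)$ times the classical moment body of probability measures, hence a compact convex set of full dimension $2n$ (the origin, realized by $\nu=\tfrac{1-\delta}{2\pi}d\theta$, is an interior point). Because $\mu\mapsto(\widehat\nu(1),\dots,\widehat\nu(n))$ maps $S_\delta$ onto $\mathcal{K}$ and, for probability measures, $F_2$ depends only on $s_1,\dots,s_n$, maximizing $F_2$ over $S_\delta$ is the same as maximizing it over $\mathcal{K}$; so if $\mu^*$ is a maximizer and $\nu^*:=\mu^*-\tfrac{\delta}{2\pi}d\theta$, then $s^*:=(\widehat{\nu^*}(1),\dots,\widehat{\nu^*}(n))$ maximizes $F_2$ on $\mathcal{K}$.

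I would then show $s^*\in\partial\mathcal{K}$: $F_2$ is smooth on the open set $\Omega_n\supset\mathcal{K}$ and, by the preceding Lemma and Remark, its gradient in the coordinates $s_1,\dots,s_n$ does not vanish at $s^*$; an interior maximum would force this gradient to vanish. Now I invoke the classical description of the boundary of the moment body (see \cite{sim1}): a point of $\mathcal{K}$ is interior if and only if the associated Toeplitz matrix $T_n(\nu)=[\widehat\nu(j-k)]_{j,k=0}^{n}$ (which depends on the point alone) is strictly positive definite. Hence $\det T_n(\nu^*)=0$; choosing $0\neq v=(v_0,\dots,v_n)$ with $T_n(\nu^*)v=0$ and setting $V(z)=\sum_{j=0}^n v_j z^j\not\equiv 0$ gives
\[
0=v^{*}T_n(\nu^*)v=\int_{\mathbb{T}}\bigl|V(e^{i\theta})\bigr|^2\,d\nu^*(\theta),
\]
so $\nu^*$ is supported on the finite set $\{z\in\mathbb{T}:V(z)=0\}$, of cardinality at most $n$. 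Therefore $\nu^*=\sum_{j=1}^N m_j\,\delta(\theta-\theta_j)$ with $m_j>0$ and $N\le n$, and $N\ge 1$ because $\nu^*(\mathbb{T})=1-\delta>0$; reindexing the atoms gives exactly \eqref{form}.

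The genuinely delicate point is the geometry of $\mathcal{K}$ --- the equivalence ``$s\in\partial\mathcal{K}$ iff $\det T_n(\nu)=0$'' and the ensuing rigidity that forces $\nu$ to be supported on at most $n$ points --- but this is classical trigonometric-moment-problem theory and may be quoted from \cite{sim1}; everything else follows once the smoothness of $F_2$ in a neighborhood of $\mathcal{K}$, provided by the bound $T_n(\mu)\ge\delta I$, is in place. I would also record that the argument uses the preceding Lemma and hence applies for $n>n_0(\delta)$ --- for small $n$ the statement fails (e.g.\ for $n=0$ every probability measure is a maximizer), so the theorem is to be read for $n$ large, in line with the preceding Corollary.
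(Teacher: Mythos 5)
Your proof is correct but takes a genuinely different route from the paper's. The paper argues directly on the measure: it forms the ``Lagrange multiplier'' trigonometric polynomial $T_n(\theta)$ built from the partials of $F_2$, notes via the preceding Lemma and Remark that this $T_n$ is not constant and hence attains its maximum at no more than $n$ points $\{\theta_j\}$, and then constructs explicit one-parameter deformations $\mu(t)\in S_\delta$ --- transporting singular mass onto a maximum of $T_n$, and shaving the absolutely continuous density down to its floor --- so that the sign of $H'(1)$ contradicts optimality unless $\mu^*$ already has the form \eqref{form}. You instead pass to the truncated moment body $\mathcal{K}$ after the affine substitution $\mu=\tfrac{\delta}{2\pi}d\theta+\nu$, use the Lemma and Remark to exclude interior maxima of $F_2$ on $\mathcal{K}$, and invoke the classical Carath\'eodory--Toeplitz description of $\partial\mathcal{K}$ (boundary points correspond to singular Toeplitz matrices) together with the rigidity $\int|V|^2\,d\nu^*=0$ to conclude that $\nu^*$ is supported on at most $n$ points. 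Both arguments turn on exactly the same input --- the non-vanishing of $\nabla_{s_1,\dots,s_n}F_2$ at $s^*$ supplied by the preceding Lemma and Remark --- and in effect the paper's supporting trigonometric polynomial $T_n(\theta)$ and your annihilating polynomial $|V(e^{i\theta})|^2$ are dual descriptions of the same exposed face of $\mathcal{K}$. Your version is shorter and cleaner because it quotes the boundary characterization of the moment body rather than rederiving it by hand through explicit mass-redistribution curves; the paper's version is more self-contained and produces the supporting polynomial explicitly, which can carry finer information about the extremal measure. You are also right to flag that both routes require $n>n_0(\delta)$ (inherited from the Lemma), a restriction the theorem statement leaves tacit, and that the factor $\tfrac{1}{2\pi}$ in the a.c.\ part is a notational inconsistency in the paper rather than an error in your decomposition.
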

\begin{proof}
 Our variational problem is
extremal problem for a functional $F(s_{0},s_{1}^R,\ldots,s_{n}^I)$ on
the finite number of moments $\{s_{0},s_{1}^R,\ldots,s_{n}^I\}$ of the
measure from $S_{\delta}$. We can take
$$
F(s_{0},s_{1}^R,\ldots,s_{n}^I)=|\phi_{n}(1)|^2\;.
$$
The function $F$ is differentiable. Moreover,
\[
s_0=\int d\mu,\quad
s_j^R=\int \cos(j\theta) d\mu,\quad s_j^I=\int \sin(j\theta)d\mu
\]
Considering the moments as the functionals in $\mu$, we compute the derivative of $F$ at the point $\mu^*$ in the direction $\delta\mu$:
\[
dF=\int \left(\frac{\partial{F}}{\partial s_0}(s^*)+\frac{\partial F}{\partial s_1^R}(s^*)\cos(\theta)+\ldots+\frac{\partial{F}}{\partial s_n^I}(s^*)\sin(n\theta)\right) d(\delta \mu)
\]
Consider the trigonometric polynomial of degree at most $n$:
\[
T_n(\theta)=\frac{\partial{F}}{\partial s_0}(s^*)+\frac{\partial F}{\partial s_1^R}(s^*)\cos(\theta)+\ldots+\frac{\partial{F}}{\partial s_n^I}(s^*)\sin(n\theta)
\]
From the previous lemma and remark, we know that it is not identically constant. Let $M=\max T_n(\theta)$ and $\{\theta_j; j=1,\ldots, N\}$ are the points where $M$ is achived.
Clearly, $N\leq n$.

Now, if we find a smooth curve  $\mu(t), t\in (0,1]$ such that $\mu(t)\in S_\delta, \mu(1)=\mu^*$ and define
\[
H(t)=F(s_0(\mu(t)),s_1^R(\mu(t)),\ldots,s_n^I(\mu(t))
\]
then $H'(1)\geq 0$ as follows from the optimality of $\mu^*$.

Now, we will assume that the measure $\mu^*$ is not of the form (\ref{form}) and then will come to the contradiction by choosing the curve $\mu(t)$ in a suitable way.

We will first prove that the singular part of $\mu^*$ can be supported only at points $\{\theta_j\}$. Indeed, suppose we have
\[
\mu^*=\mu_1+\mu_2
\]
where $\mu_2$ is singular and supported away from $\{\theta_j\}$.
Consider smooth $p_1(t)$ and $p_2(t)$ defined on $(0,1]$ and
satisfying
\[
|\mu_1|+p_1(t)+p_2(t)|\mu_2|=1, \quad p_{1(2)}(t)\geq 0,\quad p_{1}(1)=0, \quad p_2(1)=1
\]
For example, one can take $p_1(t)=|\mu_2|(1-t),\, p_2(t)=t$.
Take $\mu(t)=\mu_1+p_1(t)\delta(\theta_1)+p_2(t)\mu_2$. We have $\mu(t)\in S_\delta$ and
\[
H'(1)=\int T_n(\theta)d\mu_2-|\mu_2|T_n(\theta_1)<0
\]
since $\theta_1$ is the point of global maximum for $T_n$ and $\mu_2$ is supported away from $\{\theta_j\}$ by assumption.
This contradicts with optimality of $\mu^*$ and so $\mu_2=0$.

We can prove similarly now that $(\mu^*)'=\delta$ a.e. Indeed, suppose
\[
\mu^*=\mu_1+\mu_2, \quad \mu_2=f(\theta)\chi_{\Omega}d\theta
\]
where $f(\theta)>\delta_1>\delta$ on $\Omega$, $|\Omega|>0$ and $\mu_1$ is supported on $\Omega^c$.
We consider the curve
\[
\mu(t)=\mu_1+p_1(t)\delta(\theta_1)+p_2(t)\mu_2(t)
\]
The choice of $p_{1(2)}$ is the same and then $\mu(t)\in S_\delta$ for $t\in (1-\epsilon, 1)$ provided that $\epsilon(\delta_1)$ is small.
The similar calculation gives $H'(1)<0$ and that gives a contradiction.

\end{proof}

The formula (\ref{insert}) expresses all monic polynomials resulted
from  adding one point mass to arbitrary measure at any location and
one can try to iterate it to get the optimal measure $d\mu^*$. That,
however, leads to very complicated analysis. Nevertheless, the
simple application of this formula shows that the total number of
mass points in the optimal $\mu^*$ must necessarily grow in
$n$.\bigskip

One can make a trivial observation that is $\mu$ is any positive
measure (not necessarily a probability one) and $\phi_n(z;\mu)$ is
the corresponding orthonormal polynomial, then
\begin{equation}\label{scaling}
\phi_n(z,\alpha\mu)=\alpha^{-1/2}\phi_n(z;\mu)
\end{equation}
for every $\alpha>0$. The monic orthogonal polynomials, though, stay
unchanged
\[
\Phi_n(z,\alpha\mu)=\Phi_n(z;\mu)
\]
\bigskip

Now, consider the modification of the problem: we define
\[
\widetilde M_{n,\delta}=\sup_{\mu'>\delta/(2\pi)}
\|\phi_n(z;\mu)\|_\infty=\sup_{\mu'>\delta/(2\pi)} |\phi_n(1;\mu)|
\]
i.e. we drop the requirement for the measure $\mu$ to be a
probability measure. In this case, the upper estimate for
$\widetilde M_{n,\delta}$ stays the same (same proof)
\[
\widetilde{M}_{n,\delta}\leq \sqrt{\frac{n+1}{\delta}}
\]
It turns out that the sharp lower bound in this case can be easily
obtained and so we get
\begin{theorem}
We have
\[
\widetilde M_{n,\delta}=\sqrt{\frac{n+1}{\delta}}
\]
\end{theorem}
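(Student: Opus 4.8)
The plan is to realize the supremum through an explicit limiting family of admissible measures, rather than a single extremizer. Recall that in the proof of the upper bound $\widetilde M_{n,\delta}\le \sqrt{(n+1)/\delta}$ (the same computation as for $M_{n,\delta}$, since it used only $\|\phi_n\|_{2,\sigma}=1$ and the Steklov condition), the Cauchy--Schwarz step is tight precisely when $\phi_n=c\,(1+z+\cdots+z^n)$ and $(\mu)'\equiv\delta/(2\pi)$ on the support of the a.c. part. But $1+z+\cdots+z^n$ has all its $n$ zeros on $\mathbb{T}$, so it can never be an orthonormal polynomial; hence $\sqrt{(n+1)/\delta}$ can only be approached in a limit. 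The monic polynomial $P(z):=1+z+\cdots+z^n=\prod_{k=1}^n(z-\zeta_k)$, with $\zeta_k:=e^{2\pi i k/(n+1)}$, vanishes at exactly the $n$ points $\zeta_1,\dots,\zeta_n$, and this tells us where to place mass. So for $M>0$ I would take
\[
d\mu_M:=\frac{\delta}{2\pi}\,d\theta+M\sum_{k=1}^{n}\delta(\theta-\theta_k),\qquad e^{i\theta_k}=\zeta_k .
\]
Off the finite set $\{\theta_k\}$ the symmetric derivative of $\mu_M$ equals $\delta/(2\pi)$, and the points $\theta_k$ are not Lebesgue points of $\mu_M$; thus $\mu_M$ is admissible for the problem defining $\widetilde M_{n,\delta}$ (it need not be a probability measure).

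The key step is an $M$-uniform bound on $\|\Phi_n(\cdot,\mu_M)\|_{\mu_M}$. Since $\Phi_n(\cdot,\mu_M)$ minimizes $\|\cdot\|^2_{\mu_M}$ among monic polynomials of degree $n$, and $P$ is monic of degree $n$ and vanishes at \emph{every} atom of $\mu_M$,
\[
\|\Phi_n(\cdot,\mu_M)\|^2_{\mu_M}\le\|P\|^2_{\mu_M}=M\sum_{k=1}^{n}|P(\zeta_k)|^2+\frac{\delta}{2\pi}\int_0^{2\pi}|P(e^{i\theta})|^2\,d\theta=\delta(n+1),
\]
independently of $M$. In particular $M\sum_{k}|\Phi_n(\zeta_k,\mu_M)|^2\le\delta(n+1)$, so $\Phi_n(\zeta_k,\mu_M)\to 0$ as $M\to\infty$ for each $k$.

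To finish, note that $\Phi_n(\cdot,\mu_M)-P$ has degree at most $n-1$ and equals $\Phi_n(\zeta_k,\mu_M)$ at $\zeta_k$; Lagrange interpolation through the $n$ nodes $\zeta_1,\dots,\zeta_n$ then gives $\|\Phi_n(\cdot,\mu_M)-P\|_{L^\infty(\mathbb{T})}\to0$. Hence $\Phi_n(1,\mu_M)\to P(1)=n+1$ and $\int_0^{2\pi}|\Phi_n(e^{i\theta},\mu_M)|^2\,d\theta\to 2\pi(n+1)$; combining the latter with $\|\Phi_n(\cdot,\mu_M)\|^2_{\mu_M}\ge\frac{\delta}{2\pi}\int_0^{2\pi}|\Phi_n|^2\,d\theta$ and the uniform upper bound forces $\|\Phi_n(\cdot,\mu_M)\|^2_{\mu_M}\to\delta(n+1)$. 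Therefore
\[
|\phi_n(1,\mu_M)|^2=\frac{|\Phi_n(1,\mu_M)|^2}{\|\Phi_n(\cdot,\mu_M)\|^2_{\mu_M}}\ \longrightarrow\ \frac{(n+1)^2}{\delta(n+1)}=\frac{n+1}{\delta}\qquad(M\to\infty),
\]
so $\widetilde M_{n,\delta}\ge\sqrt{(n+1)/\delta}$, and with the upper bound this is an equality.

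There is no serious obstacle; as the paper says, the bound is obtained easily. The only points requiring care are conceptual: recognizing that the supremum is genuinely not attained (because the Cauchy--Schwarz extremizer has all zeros on $\mathbb{T}$), so one works with the family $\mu_M$; and choosing the competitor polynomial $P$ to vanish exactly at the inserted atoms, which is what keeps $\|P\|^2_{\mu_M}$ — hence $\|\Phi_n(\cdot,\mu_M)\|^2_{\mu_M}$ — bounded uniformly in $M$. Checking the Steklov/Lebesgue-point condition for the atomic part is immediate.
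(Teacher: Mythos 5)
Your proof is correct, and it takes a genuinely different route from the paper. The paper chooses the same family of measures (equal masses $m$ at the $n$ roots of $1+z+\cdots+z^n$ over the background $\frac{\delta}{2\pi}\,d\theta$), but then \emph{explicitly solves} for the orthogonal polynomial: writing $\Phi_n=\Pi_n+Q_{n-1}$, it uses the symmetry of the equal-mass configuration to reduce the orthogonality conditions to a one-parameter linear system, obtaining $\Phi_n=\frac{m}{\delta+m}\Pi_n+\frac{\delta}{\delta+m}z^n$ in closed form, from which $\Phi_n(1)$ and $\|\Phi_n\|_\sigma^2$ are computed directly and the limit $m\to\infty$ is taken. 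You instead never compute $\Phi_n$: you exploit the variational characterization of the monic orthogonal polynomial (minimal $\|\cdot\|^2_{\mu_M}$-norm among monics of degree $n$) with $\Pi_n$ as a competitor that kills all the atomic contribution, obtain the $M$-uniform bound $\|\Phi_n\|^2_{\mu_M}\le\delta(n+1)$, deduce $\Phi_n(\zeta_k)\to 0$, upgrade this to uniform convergence $\Phi_n\to\Pi_n$ via Lagrange interpolation at the fixed $n$ nodes, and then squeeze. Your approach is cleaner conceptually and more robust: it does not rely on the equal-mass symmetry that makes the paper's linear system solvable by inspection (any $m_k\to\infty$ would work for you), at the cost of the small interpolation argument. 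The paper's computation, conversely, yields the exact finite-$m$ value of $\phi_n(1)$, not just its limit.
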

\begin{proof}
Consider
\begin{equation}\label{19}
d\sigma=\delta/(2\pi)
d\theta+\sum_{k=1}^{n}m_{k}\,\delta(\theta-\theta_{k})\;,
\quad\theta_{k}=\frac{k}{n+1}2\pi\,,\quad  k=1,\ldots,n\,,
\end{equation}
We assume that all $m_k\geq  0$. Consider
\begin{equation}\label{21}
\Pi_{n}(z)=\prod\limits_{k=1}^{n}(z-\varepsilon_{k})\;,\quad\varepsilon_{k}=e^{i\theta_{k}}
\end{equation}
and one gets: $ \Pi_n(z)=1+z+\ldots+z^n,\quad
\|\Pi_n\|^2_\sigma=\delta(n+1). $ We define now
\[
\Phi_n=\Pi_n+Q_{n-1}
\]
where $Q_{n-1}(z)=q_{n-1}z^{n-1}+\ldots+q_1z+q_0$ is chosen to
guarantee the orthogonality $\langle \Phi_n,z^j\rangle_{\sigma}=0,
j=0,\ldots, n-1$. Suppose now that $m_k=m$ for all $k$. Then, we
have the following equations
\[
\delta+\delta q_j+m\sum_{l=0}^{n-1}q_l\sum_{k=1}^n
\epsilon_k^{l-j}=0, \quad j=0,\ldots, n-1
\]
Then, since
\[
\sum_{k=0}^n \epsilon_k^d=0, \quad d\in \{-n,\ldots,-1,1,\ldots,n\}
\]
we get
\[
\delta+\delta q_j+m(n+1)q_j -m\sum_{l=0}^{n-1} q_l=0, \quad
j=0,\ldots, n-1
\]
and
\[
q_j=-\frac{\delta}{\delta+m}, \quad j=0,\ldots n-1
\]
Thus,
\[
\Phi_n(z)=1+\ldots+z^n-\frac{\delta}{\delta+m}(1+\ldots+z^{n-1})=
\frac{m}{\delta+m}\Pi_n(z)+\frac{\delta}{\delta+m}z^n
\]
Now, we have
\[
\|\Phi_n\|_\infty=\Phi_n(1)=n+1-\frac{\delta}{\delta+m}n=1+\frac{mn}{\delta+m}
\]
and
\[
\|\Phi\|_\sigma^2=\delta\left(1+\frac{m^2n}{(\delta+m)^2}\right)+\frac{\delta^2
nm}{(\delta+m)^2}=\delta\left(1+\frac{mn}{\delta+m}\right)
\]
For the orthonormal polynomial, we have
\[
\phi_n(1)=\frac{\Phi_n(1)}{\|\Phi_n\|_\sigma}
\]
For fixed $n$, we have
\[
\lim_{m\to\infty}\|\phi_n\|_\infty=\sqrt{(n+1)/\delta}
\]
\end{proof}
{\bf Remark.} This theorem has the following implication for our
original problem. Suppose we consider the class $S_\delta$ but
$\delta$ is small in $n$. Then, (\ref{scaling}) implies that
\[
M_{n,\delta_n}=\sqrt{\frac{n+1}{\delta_n}}(1+\overline{o}(1))
\]
where
\[
\delta_n=\frac{C}{nm_n}, \quad m_n\to+\infty,\quad {\rm as}\quad   n\to\infty
\]
Thus, in the small $\delta$ regime the upper bound for
$M_{n,\delta}$ is sharp. If one takes $m_n=1/n$ in the proof above
to make the total mass finite, the polynomials $\phi_n$ constructed
above are bounded in $n$ as $\delta\sim 1$.\bigskip



\section{The main theorem}\label{sec.3}

In this section, we prove the main result of this paper, the sharp
lower bound for fixed $\delta$. We start by introducing some
notation and recalling the relevant facts from the theory of
polynomials orthogonal on the unit circle. Given any polynomial
$P_n(z)=p_nz^n+\ldots+p_1z+p_0$, we can define its $n$-th reciprocal
(or the $*$--transform)
\[
P_n^*(z)=z^n \overline{P_n(1/{\overline z})}=\overline{p}_0
z^n+\overline{p}_1z^{n-1}+\ldots+\overline{p}_n
\]
Notice that if $z^*\neq 0$ is a root of $P_n(z)$, then
$(\overline{z^*})^{-1}$ is a root of $P_n^*(z)$.

The following trivial lemma will be relevant later
\begin{lemma}\label{noli}
If a polynomial $P_n$ has all zeroes inside ${\mathbb{D}}$ then
$D_n(z)=P_n(z)+P_n^*(z)$ has all zeroes on the unit circle.
\end{lemma}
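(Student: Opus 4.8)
The plan is to exploit the self-inversive symmetry of $D_n$ together with a Blaschke-factor estimate. Write $P_n(z)=p_nz^m\prod_{j=1}^{n-m}(z-z_j)$ with $p_n\neq 0$ and $0<|z_j|<1$, the factor $z^m$ collecting a possible zero at the origin. Then $P_n^*(z)=\overline{p_n}\prod_{j=1}^{n-m}(1-\overline{z_j}\,z)$, so the nonzero roots of $P_n^*$ are the points $1/\overline{z_j}$, all of which lie strictly outside $\overline{\mathbb{D}}$; in particular $P_n^*$ has no zeros on $\overline{\mathbb{D}}$. Hence
\[
h(z):=\frac{P_n(z)}{P_n^*(z)}=\frac{p_n}{\overline{p_n}}\,z^m\prod_{j=1}^{n-m}\frac{z-z_j}{1-\overline{z_j}\,z}
\]
is analytic on a neighbourhood of $\overline{\mathbb{D}}$ and equals a unimodular constant times a finite Blaschke product of degree $n$. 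Consequently $|h(z)|<1$ for $z\in\mathbb{D}$ and $|h(z)|=1$ for $z\in\mathbb{T}$; the strict inequality on $\mathbb{D}$ comes from the elementary identity $|1-\overline{z_j}z|^2-|z-z_j|^2=(1-|z|^2)(1-|z_j|^2)>0$.

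From the factorization $D_n(z)=P_n^*(z)\bigl(1+h(z)\bigr)$ it follows at once that $D_n$ has no zero in $\mathbb{D}$: there $P_n^*(z)\neq 0$, and $|h(z)|<1$ rules out $h(z)=-1$. Next I would record that $\deg D_n=n$ exactly: the leading coefficient of $D_n$ is $p_n+\overline{p_0}$ with $|p_0|=|p_n|\prod_j|z_j|<|p_n|$ (strict, since every $|z_j|<1$), so $p_n+\overline{p_0}\neq 0$; the same inequality gives $D_n(0)=p_0+\overline{p_n}\neq 0$.

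Finally, a direct computation gives $(P_n^*)^*=P_n$ (as degree-$n$ objects, using $p_n\neq 0$), whence $D_n^*=P_n^*+(P_n^*)^*=P_n^*+P_n=D_n$; that is, $D_n$ is self-inversive. Therefore $z_0$ is a zero of $D_n$ if and only if $1/\overline{z_0}$ is, so the absence of zeros in $\mathbb{D}$ forces the absence of zeros in $\{|z|>1\}$ as well, and $D_n(0)\neq 0$. Counting: $D_n$ has exactly $n$ zeros, all of which must lie on $\mathbb{T}$.

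I do not expect a serious obstacle here. The only points needing a little care are the bookkeeping when $P_n$ vanishes at the origin, so that $h$ is genuinely a degree-$n$ Blaschke product, and the verification that no degree drop occurs in passing from $P_n$ to $D_n$ — which is precisely where the strict containment $|z_j|<1$ is used. An alternative to the last paragraph is to count the zeros of $D_n$ inside $\mathbb{D}$ via the argument principle applied to $1+h$ on the circles $|z|=r$ with $r\uparrow 1$, but the self-inversive argument is shorter and avoids discussing zeros on $\mathbb{T}$ during the count.
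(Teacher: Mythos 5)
Your proof is correct and follows essentially the same route as the paper: factor $D_n=P_n^*\bigl(1+P_n/P_n^*\bigr)$, note that $P_n^*$ is zero free in $\mathbb{D}$ and that the Blaschke-product bound excludes $P_n/P_n^*=-1$ there, then use the self-inversive symmetry $D_n^*=D_n$ to rule out zeroes in $|z|>1$. The only differences are cosmetic: you get strictness from the explicit Blaschke factor identity instead of the maximum principle, and your degree/nonvanishing-at-$0$ count reproves (by a different, coefficient-based argument) what the paper relegates to the remark following the lemma.
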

\begin{proof}
We have
\[
D_n(z)=P_n^*\left(1+\frac{P_n}{P_n^*} \right), \quad z\in \mathbb{D}
\]
The first factor is zero free in $\mathbb{D}$. In the second one, $ {P_n}/{P_n^*} $ is
 a Blaschke product. If
$P_n=-P_n^*$ identically then the zeroes of $P_n$ must be on
$\mathbb{T}$ which is impossible  by assumption. Therefore, by
maximum principle,
\[
1+\frac{P_n}{P_n^*}
\]
does not have zeroes in $\mathbb{D}$. Since $D_n$ is invariant under
the $*$--transform, it has the following property: $D_n(w)=0$
implies $D_n(\overline{w}^{-1})=0$. Therefore, $D_n$ is zero free in
$|z|>1$ as well.
\end{proof}
{\bf Remark.} One can actually show that $D_n$ has the same degree as
$P_n$ under the assumptions of the lemma. Indeed, suppose not, then
we can assume that $P^*_n=z^n-1+z\widehat P(z)$ where $\widehat P$
is a polynomial. Now, we can use the argument principle: ${\rm Var}
\arg (z\widehat P)\, |_{\mathbb{T}}\geq 2\pi$ and ${\rm Var} \arg
\Bigl( z^n-(1+\epsilon)\Bigr)\, |_{\mathbb{T}}=0$ for any
$\epsilon>0$. Thus, $P^*_n-\epsilon$ has a zero in $D$ for
arbitrarily small $\epsilon>0$ which is impossible since by
assumption $P_n^*$ has all zeroes in $|z|>1$.\bigskip

We will be mostly working with the orthonormal polynomials $\phi_n$
and the corresponding $\phi_n^*$. It is well known \cite{sim1} that
all zeroes of $\phi_n$ are inside $\mathbb{D}$ thus $\phi_n^*$ has
no zeroes in $\overline{\mathbb{D}}$. However, we also need to
introduce the second kind polynomials $\psi_n$ along with the
corresponding $\psi_n^*$. Let us recall (\cite{sim1}, p. 57) that
\[
\left\{
\begin{array}{cc}
\phi_{n+1}=\rho_n^{-1}(z\phi_n-\overline{\gamma}_n\phi_{n}^*),& \phi_0=1\\
\phi_{n+1}^*=\rho_{n}^{-1}(\phi_n^*-\gamma_n z\phi_n),& \phi^*_0=1
\end{array}
\right.
\]
and the second kind polynomials satisfy the recursion with Schur
parameters $-\gamma_n$, i.e.
\begin{equation}\label{secon}
\left\{
\begin{array}{cc}
\psi_{n+1}=\rho_n^{-1}(z\psi_n+\overline{\gamma}_n\psi_{n}^*),& \psi_0=1\\
\psi_{n+1}^*=\rho_{n}^{-1}(\psi_n^*+\gamma_n z\psi_n),& \psi^*_0=1
\end{array}
\right.
\end{equation}
and the following Bernstein-Szeg\H{o} approximation result is valid:
\begin{theorem}Suppose $d\mu$ is a probability measure and $\{\phi_j\}$ and $\{\psi_j\}$
are the
corresponding orthonormal polynomials of the first/second kind,
respectively.  Then,  for any $N$, the function
\[
F_N(z)=\frac{\psi_N^*(z)}{\phi_N^*(z)}=\int_{\mathbb{T}}
C(z,e^{i\theta})d\mu_N(\theta), \quad d\mu_N(\theta)=\frac{1}{2\pi
|\phi_N(e^{i\theta})|^2}=\frac{1}{2\pi |\phi^*_N(e^{i\theta})|^2}
\]
has the first $N$ Taylor coefficients (which are the moments of
$d\mu_N$ up to a constant) identical to the Taylor coefficients of
the function
\[
F(z)=\int_{\mathbb{T}} C(z,e^{i\theta})d\mu(\theta)
\]
In particular, the polynomials $\{\phi_j\}$ and $\{\psi_j\}$, $j\leq
N$ are the orthonormal polynomials of the first/second kind for the
measure $d\mu_N$.
\end{theorem}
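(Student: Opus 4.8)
The statement is the classical Bernstein--Szeg\H{o} approximation; here is how I would organize a self-contained proof. The argument splits into two essentially independent parts: first, identifying the function $F_N=\psi_N^*/\phi_N^*$ with the Carath\'eodory function $\int_{\mathbb{T}}C(z,\xi)\,d\mu_N(\xi)$ of the stated measure; second, checking that $F_N$ matches $F$ to the required order at the origin.

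For the first part, recall from the excerpt that $\phi_N^*$ has no zeros in $\overline{\mathbb{D}}$, so $F_N$ is analytic in a neighbourhood of $\overline{\mathbb{D}}$. I would first record the ``Wronskian'' identity
\[
\phi_n(z)\psi_n^*(z)+\psi_n(z)\phi_n^*(z)=2z^n ,
\]
proved by induction on $n$: it holds at $n=0$, and the inductive step is a direct substitution of the recursions for $\phi_{n+1},\phi_{n+1}^*$ and of (\ref{secon}) for $\psi_{n+1},\psi_{n+1}^*$, after which the terms quadratic in $\gamma_n$ and the term $\bar\gamma_n\phi_n^*\psi_n^*$ cancel and one is left with $\rho_n^{-2}(1-|\gamma_n|^2)z\cdot 2z^n=2z^{n+1}$. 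Dividing by $\phi_n\phi_n^*$ and using that on $\mathbb{T}$ one has $\phi_n^*(z)=z^n\overline{\phi_n(z)}$ and $\psi_n^*(z)=z^n\overline{\psi_n(z)}$, hence $\phi_n\phi_n^*=z^n|\phi_n|^2$ and $\overline{\psi_n^*/\phi_n^*}=\psi_n/\phi_n$ there, we obtain
\[
2\,\Re F_N=\frac{\psi_N^*}{\phi_N^*}+\frac{\psi_N}{\phi_N}=\frac{2z^N}{\phi_N\phi_N^*}=\frac{2}{|\phi_N|^2}=\frac{2}{|\phi_N^*|^2}\qquad\text{on }\mathbb{T}.
\]
Since $\Re F_N$ is harmonic across $\mathbb{T}$ and nonnegative there, the minimum principle gives $\Re F_N\ge 0$ on $\overline{\mathbb{D}}$, so $F_N$ is a Carath\'eodory function; comparing leading coefficients (those of $\phi_N$ and of $\psi_N$ both equal $\prod_{j<N}\rho_j^{-1}$) gives $F_N(0)=1$. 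By the Riesz--Herglotz representation $F_N(z)=\int_{\mathbb{T}}C(z,\xi)\,d\nu(\xi)$ for a probability measure $\nu$, and because $F_N$ extends analytically past $\mathbb{T}$, $\nu$ is purely absolutely continuous with density $\frac{1}{2\pi}\Re F_N(e^{i\theta})$; thus $d\nu=d\mu_N$ with $d\mu_N=\frac{d\theta}{2\pi|\phi_N(e^{i\theta})|^2}=\frac{d\theta}{2\pi|\phi_N^*(e^{i\theta})|^2}$, which is the first claim.

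For the second part, the content is that $d\mu_N$ is the measure obtained from $\mu$ by replacing all Schur parameters of index $\ge N$ by $0$. I would establish this by verifying directly that $\phi_0,\dots,\phi_N$ are orthonormal with respect to $d\mu_N$: for $0\le l\le j\le N$ one rewrites $\langle\phi_j,z^l\rangle_{\mu_N}$, using $|\phi_N(z)|^{-2}=z^N/(\phi_N(z)\phi_N^*(z))$ on $\mathbb{T}$, as a contour integral over $\mathbb{T}$ of a rational function whose poles inside $\mathbb{D}$ are the zeros of $\phi_N$, and evaluates it by residues (equivalently: run the Schur algorithm, or cite \cite{sim1}) to get $\langle\phi_j,z^l\rangle_{\mu_N}=\delta_{jl}$. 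Since $\{\phi_j\}_{j\le N}$ is a triangular basis of $\{1,z,\dots,z^N\}$ with positive leading coefficients, uniqueness of the orthonormal polynomials forces $\phi_j(\cdot,\mu_N)=\phi_j(\cdot,\mu)$ for $j\le N$, and hence $\gamma_j(\mu_N)=\gamma_j(\mu)$ for $j\le N-1$. Because the map $(\gamma_0,\dots,\gamma_{N-1})\mapsto(c_1,\dots,c_N)$ (with $c_0\equiv 1$) is a bijection, the moments $c_k=\int e^{ik\theta}\,d\mu$ and $\int e^{ik\theta}\,d\mu_N$ coincide for $0\le k\le N$, which is precisely the statement that the Taylor coefficients of $F$ and $F_N$ at the origin agree through order $N$.

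The final assertion is then immediate: for $j\le N$ the polynomial $\phi_j$ is determined by $c_0,\dots,c_j$, and $\psi_j$ is the first-kind orthonormal polynomial for the reflected parameters $-\gamma_0,\dots,-\gamma_{j-1}$, so both families coincide for $\mu$ and $\mu_N$. The only step that is not purely formal is the identification in the previous paragraph, i.e.\ showing that $d\mu_N$ has Schur parameters $(\gamma_0,\dots,\gamma_{N-1},0,0,\dots)$; everything else reduces to the one-line Wronskian induction together with Riesz--Herglotz. I would expect the mild index bookkeeping (``first $N$'' versus ``first $N+1$'' Taylor coefficients, the latter being what the orthogonal-polynomial conclusion actually uses) to be the only other place requiring a word of care.
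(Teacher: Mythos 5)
The paper itself offers no proof of this statement: it is quoted as the classical Bernstein--Szeg\H{o} approximation (with \cite{sim1} as the implicit reference), so there is nothing internal to compare you against. Judged on its own, your argument is the standard one and its first half is correct and complete: the Wronskian identity $\phi_n\psi_n^*+\psi_n\phi_n^*=2z^n$ by induction, the boundary identity $2\Re F_N=2|\phi_N|^{-2}$ on $\mathbb{T}$, positivity of $\Re F_N$ in $\mathbb{D}$ by the minimum principle, $F_N(0)=1$ from the equal (positive) leading coefficients, and Riesz--Herglotz (or simply the Poisson formula, since $\Re F_N$ is continuous on $\overline{\mathbb{D}}$) identify $F_N$ with the Carath\'eodory function of $d\mu_N$.

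The step I would not let stand as written is the claim that $\langle\phi_j,z^l\rangle_{\mu_N}$ ``evaluates by residues'' to $\delta_{jl}$ for all $0\le l\le j\le N$. First, a small slip: for $l=j$ the value is $\kappa_j^{-1}=\|\Phi_j\|_\mu$, not $1$; what you need is orthogonality to lower powers together with $\|\phi_j\|_{\mu_N}=1$. Second, and more substantively, for $j<N$ the contour integral $\frac{1}{2\pi i}\oint \phi_j(z)z^{N-l-1}\bigl(\phi_N(z)\phi_N^*(z)\bigr)^{-1}dz$ has poles at all zeros of $\phi_N$ inside $\mathbb{D}$, and the vanishing of the sum of those residues is not apparent without already knowing the conclusion; the computation is clean only for $j=N$, where $\int\phi_N\bar z^l\,d\mu_N=\frac{1}{2\pi}\int z^{N-l}/\phi_N^*\,d\theta=0$ for $l<N$ because $z^{N-l}/\phi_N^*$ is analytic in $\overline{\mathbb{D}}$ and vanishes at the origin, while $\int|\phi_N|^2d\mu_N=1$. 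So the correct elementary route --- which you do flag parenthetically as ``run the Schur algorithm'' --- is: prove only that $\phi_N$ is the $N$-th orthonormal polynomial of $\mu_N$, then invoke the inverse Szeg\H{o} recursion, which recovers $\phi_j(\cdot,\mu_N)$, $j<N$, and $\gamma_0,\dots,\gamma_{N-1}$ from $\phi_N$ alone; since $\phi_N(\cdot,\mu_N)=\phi_N(\cdot,\mu)$ this gives $\gamma_j(\mu_N)=\gamma_j(\mu)$ for $j\le N-1$ (and, checking $z\Phi_N\perp z^l$, $l\le N$, in $L^2(\mu_N)$ by the same analyticity argument, $\gamma_j(\mu_N)=0$ for $j\ge N$). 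With that replacement, the rest of your chain --- equal Verblunsky coefficients, hence equal moments $c_1,\dots,c_N$, hence equal Taylor coefficients of $F$ and $F_N$ through order $N$, hence the same first- and second-kind polynomials up to index $N$ --- is sound.
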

In a way, the converse to the Bernstein-Szeg\H{o} approximation
holds as well.
\begin{lemma}\label{vspomag}
The polynomial $P_n(z)$ of degree $n$ is the orthonormal polynomial
for the probability measure with infinitely many growth points if
and only if
\begin{itemize}
\item[1.] $P_n(z)$ has all $n$ zeroes inside $\mathbb{D}$ (counting the multiplicities).
\item[2.] The normalization condition
\[
\int_\mathbb{T} \frac{d\theta}{2\pi|P_n(e^{i\theta})|^2}=1
\]
is satisfied.
\end{itemize}
\end{lemma}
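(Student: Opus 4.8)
The plan is to prove the two directions separately. For the "only if" direction, suppose $P_n$ is the orthonormal polynomial of degree $n$ for a probability measure $\mu$ with infinitely many growth points. Then $P_n=c\phi_n(z,\mu)$ with $|c|=1$, and the standard theory recalled above gives that all $n$ zeroes of $\phi_n$ lie inside $\mathbb{D}$, which is item 1 (the multiplicity statement follows because a zero of multiplicity $k$ forces $\phi_n$ and its first $k-1$ derivatives to vanish, and the zeros-in-$\mathbb{D}$ assertion is about the polynomial, not just its distinct roots). For item 2, I would invoke the Bernstein--Szeg\H{o} theorem stated just above with $N=n$: the measure $d\mu_n=\frac{d\theta}{2\pi|\phi_n(e^{i\theta})|^2}$ has $\{\phi_j\}_{j\le n}$ as its orthonormal polynomials, and in particular $\phi_n$ is orthonormal with respect to it. Writing out $\|\phi_n\|^2_{\mu_n}=1$ gives exactly $\int_{\mathbb{T}}\frac{d\theta}{2\pi|\phi_n(e^{i\theta})|^2}=1$, and since $P_n$ differs from $\phi_n$ by a unimodular constant the same holds for $P_n$.

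For the "if" direction, assume $P_n$ satisfies 1 and 2. Since all zeroes of $P_n$ are in $\mathbb{D}$, the polynomial $P_n^*$ is zero-free on $\overline{\mathbb{D}}$, so $F(z)=P_n^*(z)/\widetilde{P}_n^*(z)$ is well defined and analytic near $\overline{\mathbb{D}}$ for an appropriate second-kind partner; more directly, I would define the candidate measure by $d\mu(\theta)=\frac{d\theta}{2\pi|P_n(e^{i\theta})|^2}$. Condition 2 says precisely that $\mu$ is a probability measure, and it is purely absolutely continuous with a strictly positive real-analytic density on $\mathbb{T}$ (the denominator never vanishes), hence has infinitely many growth points. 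It remains to check that $P_n$ is the $n$-th orthonormal polynomial for this $\mu$. The cleanest route is to exhibit the second-kind polynomial: set $Q_n$ to be the polynomial of degree $n$ with $Q_n^*=$ the Schur/Szeg\H{o} dual determined by the recursion with parameters $-\gamma_j$, where $\gamma_j$ are read off from $P_n$ via the inverse Szeg\H{o} recursion (possible because each $|\gamma_j|<1$, as the zeros of the successive $P_j$ built by the backward recursion stay in $\mathbb{D}$ — this uses that $P_n$ has all zeros in $\mathbb{D}$). Then $P_n^*/Q_n^*$ (or rather $Q_n^*/P_n^*$ in the Carath\'eodory normalization) has nonnegative real part on $\mathbb{T}$ equal to $\frac{1}{|P_n^*(e^{i\theta})|^2}=\frac{1}{|P_n(e^{i\theta})|^2}$, so it is a Carath\'eodory function whose boundary measure is exactly $\mu$; by the Bernstein--Szeg\H{o} theorem applied in reverse, $P_n$ is the $n$-th orthonormal polynomial for $\mu$.

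The main obstacle is the step in the "if" direction of producing the Schur parameters and verifying $|\gamma_j|<1$ for all $j<n$ — i.e., running the backward Szeg\H{o} recursion and showing it does not break down. The key point to nail is that the backward recursion $P_{j}=\rho_{j}^{-1}(\dots)$ inverted gives $\gamma_{j-1}=-\overline{P_j^*(0)}/\overline{P_j(0)}$-type formulas (up to normalization) and that $|\gamma_{j-1}|<1$ is equivalent to $P_{j-1}$ again having all zeros in $\mathbb{D}$; this is a standard Schur-algorithm fact but should be stated carefully, perhaps by induction downward from $j=n$ using that $|P_j|>|P_j^*|$ nowhere on $\mathbb{D}$ fails, or more simply by citing the Schur algorithm characterization in \cite{sim1}. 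Once the $\gamma_j$ are in hand, everything else is the bookkeeping already set up in the Bernstein--Szeg\H{o} theorem above, so I would keep that part brief.
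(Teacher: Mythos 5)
The paper states this lemma without proof, treating it as a standard converse to the Bernstein--Szeg\H{o} theorem quoted just above it, so there is no house argument to compare yours against; I will evaluate it on its own terms. Your ``only if'' direction is in essence right, but the specific step you wrote is vacuous: $\|\phi_n\|_{\mu_n}^2=\int|\phi_n|^2\,\frac{d\theta}{2\pi|\phi_n|^2}=1$ holds trivially for \emph{every} nonvanishing polynomial and does not give condition~2. The normalization comes instead from the coincidence of the \emph{zeroth} moments: Bernstein--Szeg\H{o} gives $\int d\mu_n=\int d\mu=1$, i.e. $\int\frac{d\theta}{2\pi|\phi_n|^2}=1$, and that is exactly what is needed. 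Condition~1 is, as you say, standard.

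In the ``if'' direction the candidate measure $d\mu=\frac{d\theta}{2\pi|P_n|^2}$ is the right choice, but everything you do after that --- inverting the Szeg\H{o} recursion to produce Schur parameters, building the second-kind polynomial $Q_n$, forming $Q_n^*/P_n^*$, and then ``applying Bernstein--Szeg\H{o} in reverse'' --- is far heavier than the problem requires, and the last step in particular verges on circularity since ``Bernstein--Szeg\H{o} in reverse'' is essentially the very statement you are proving. You correctly flag the backward-recursion step ($|\gamma_j|<1$ at each stage because the zeros of the successive polynomials stay in $\mathbb{D}$) as the real content, but you only cite it. All of this machinery can be bypassed by a one-line Cauchy-integral computation. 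Condition~2 makes $\mu$ a probability measure, and since $P_n$ (hence $P_n^*$) has no zero on $\mathbb{T}$, the density is smooth and strictly positive, so $\mu$ has infinitely many growth points. The normalization is immediate: $\|P_n\|^2_{L^2(\mu)}=\frac{1}{2\pi}\int_{\mathbb{T}}d\theta=1$. For the orthogonality, use that $\overline{P_n(z)}=z^{-n}P_n^*(z)$ for $z\in\mathbb{T}$, so for $0\le j\le n-1$
\[
\langle P_n,z^j\rangle_\mu=\frac{1}{2\pi}\int_\mathbb{T}\frac{\overline{z}^{j}}{\overline{P_n(z)}}\,d\theta
=\frac{1}{2\pi}\int_\mathbb{T}\frac{z^{n-j}}{P_n^*(z)}\,d\theta=0,
\]
because $z^{n-j}/P_n^*(z)$ is holomorphic on a neighborhood of $\overline{\mathbb{D}}$ (by condition~1, $P_n^*$ is zero-free there) and vanishes at the origin since $n-j\ge 1$, so the mean value over $\mathbb{T}$ equals its value at $0$, namely zero. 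This gives the ``if'' direction directly, with no second-kind polynomials, Wronskian identities, or Schur algorithm.
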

Now, we are ready to formulate the main result of the
paper.
\begin{theorem}\label{T3} For any $\delta\in (0,\delta_0)$ with $\delta_0$ sufficiently small,
 we have
\begin{equation}\label{osnova}
M_{n,\delta} > C(\delta)\sqrt{{n}}
\end{equation}
\end{theorem}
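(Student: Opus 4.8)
The plan is to construct, for each large $n$, a measure $\mu_n \in S_\delta$ whose $n$-th orthonormal polynomial is large at $z=1$, of size $\sim \sqrt n$. The key is to run the logic in reverse: rather than starting with a measure and computing $\phi_n$, I would start by specifying the polynomial $\phi_n$ directly and then verify via Lemma \ref{vspomag} that it is the orthonormal polynomial of some probability measure, and moreover that this measure lies in $S_\delta$. So I need to produce a degree-$n$ polynomial $P_n$ with (i) all zeroes strictly inside $\mathbb{D}$, (ii) the normalization $\int_{\mathbb T} (2\pi|P_n|^2)^{-1}\,d\theta = 1$, (iii) $|P_n(1)| \gtrsim \sqrt n$, and (iv) the resulting measure $d\mu = (2\pi|\phi_n|^2)^{-1}d\theta$ (approximated in the Bernstein–Szeg\H{o} sense) has density bounded below by $\delta/(2\pi)$ at every Lebesgue point. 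Points (i) and (ii) are exactly the conditions of Lemma \ref{vspomag}; the real content is engineering (iii) and (iv) simultaneously.

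To get a handle on condition (iv), I would reformulate the Steklov lower bound on the density in terms of the second-kind polynomial. By Theorem 3.1 (Bernstein–Szeg\H{o}), the measure $d\mu_N$ associated to $\phi_N$ has density $(2\pi|\phi_N^*|^2)^{-1}$, and its Cauchy (Caratheodory) transform is $F_N = \psi_N^*/\phi_N^*$. Since $\Re F_N(e^{i\theta}) = \mu_N'(\theta)\cdot 2\pi = |\phi_N^*(e^{i\theta})|^{-2}$ on $\mathbb T$, the Steklov condition $\mu_N' \ge \delta/(2\pi)$ becomes the requirement $|\phi_n^*|^2 \le 1/\delta$ on $\mathbb T$. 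So the problem reduces to: find $\phi_n$ with all zeroes in $\mathbb D$, correctly normalized, with $\|\phi_n^*\|_{L^\infty(\mathbb T)}^2 \le 1/\delta$ and $|\phi_n(1)| = |\phi_n^*(1)| \gtrsim \sqrt n$ (using $|\phi_n(1)| = |\phi_n^*(1)|$ on $\mathbb T$, up to a unimodular factor). Writing $\phi_n^* = D_n$ as in Lemma \ref{noli}, namely $D_n = P + P^*$ with $P$ having all zeroes in $\mathbb D$ — so that $D_n$ has all its zeroes on $\mathbb T$, forcing $\phi_n$ to have all zeroes in $\mathbb D$ — I would pick $P$ to be a carefully chosen polynomial (a small perturbation of something like $z^n$ or a product of a few Blaschke-type factors) engineered so that $|P + P^*|$ stays uniformly bounded by $\delta^{-1/2}$ on $\mathbb T$ while $|P(1) + P^*(1)| = 2\Re P(1)$ is as large as $\sqrt n$, and then rescale to fix the normalization (ii). The Herglotz/Caratheodory function mentioned in the introduction is exactly $F_n = \psi_n^*/\phi_n^*$, and the "convenient form" of the Steklov condition is the pointwise bound on $\Re F_n$.

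The key steps, in order: (1) translate $\mu_n \in S_\delta$ into the analytic condition $\|\phi_n^*\|_\infty^2 \le 1/\delta$ via the Bernstein–Szeg\H{o} correspondence; (2) parametrize candidate $\phi_n^*$ as $P + P^*$ with $P$ zero-free outside $\mathbb D$, guaranteeing the zero-location requirement for free; (3) exhibit an explicit $P$ for which $|P+P^*|$ is uniformly $\lesssim 1$ on $\mathbb T$ but $|(P+P^*)(1)| \gtrsim \sqrt n$ — this is the heart of the construction and likely involves balancing a slowly-varying bounded factor against a contribution concentrated near $z=1$, possibly built from a lacunary or Fejér-kernel-like structure; (4) compute (or estimate) $\int_{\mathbb T} |P+P^*|^{-2}\,d\theta/(2\pi)$ and rescale $P \mapsto cP$ to normalize it to $1$, checking that the rescaling does not destroy the $\sqrt n$ lower bound at $z=1$ (it multiplies everything by the same constant $c \sim 1$, so the product bound and the value at $1$ scale compatibly as long as $c$ is bounded above and below); (5) invoke Lemma \ref{vspomag} to conclude that this $P+P^*$ is $\phi_n^*$ for a genuine probability measure $\mu_n$, verify $\mu_n \in S_\delta$ from step (1), and read off $M_{n,\delta} \ge |\phi_n(1;\mu_n)| \gtrsim \sqrt n$.

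The main obstacle, I expect, is step (3): finding a bounded-on-$\mathbb T$ polynomial of the special self-reciprocal type $P+P^*$ whose value at a boundary point is as large as the square root of the degree, while keeping the $L^\infty$ bound tight enough (constant depending only on $\delta$) to survive the normalization. This is a genuine extremal-construction problem — one cannot simply take $P = z^n$ (then $P+P^* = z^n+1$ is bounded by $2$ but only equals $2$ at $z=1$, giving no growth), so one needs the bounded factor to "hide" a large derivative or a large localized bump at $z=1$; controlling the interaction between that bump and the normalization integral $\int |P+P^*|^{-2}$, which could blow up if $P+P^*$ has near-zeros on $\mathbb T$, is the delicate part. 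A secondary technical point is ensuring strict interiority of the zeroes of $\phi_n$ (so the measure has infinitely many growth points, as Lemma \ref{vspomag} requires) — this follows from Lemma \ref{noli} once $P \ne -P^*$, but one should keep an eye on it.
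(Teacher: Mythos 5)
Your reduction in step (1) is self-defeating, and this is the fatal gap. If you insist that the measure be exactly the Bernstein--Szeg\H{o} measure $d\mu=(2\pi|\phi_n^*|^2)^{-1}d\theta$, then indeed the Steklov condition becomes $\|\phi_n^*\|^2_{L^\infty(\mathbb{T})}\le 1/\delta$; but $z=1$ lies on $\mathbb{T}$ and $|\phi_n(1)|=|\phi_n^*(1)|$, so this bound forces $|\phi_n(1)|\le\delta^{-1/2}$, i.e. within this subclass of measures no growth in $n$ is possible at all. The same contradiction is stated verbatim in your step (3): a polynomial with $|P+P^*|\lesssim 1$ uniformly on $\mathbb{T}$ cannot satisfy $|(P+P^*)(1)|\gtrsim\sqrt n$, since $1\in\mathbb{T}$. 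What your plan is missing is exactly the idea the paper is built on: the measure exhibiting the growth is \emph{not} the Bernstein--Szeg\H{o} measure of $\phi_n$. One keeps the first $n$ Schur parameters determined by $\phi_n$ and then continues the parameter sequence with the Schur parameters of an auxiliary smooth measure $\widetilde\sigma$ encoded by a Herglotz function $\widetilde F$ whose real part is of size $n$ on an arc of length $\sim n^{-1}$ around $\theta=0$. The density of the glued measure is $\sigma'=2\Re\widetilde F/\bigl(\pi|\phi_n+\phi_n^*+\widetilde F(\phi_n^*-\phi_n)|^2\bigr)$, so the Steklov condition turns into the inequality (\ref{main}), namely $|\phi_n^*|+|\widetilde F(\phi_n-\phi_n^*)|\lesssim(\Re\widetilde F)^{1/2}$, which \emph{does} allow $|\phi_n^*(1)|\sim\sqrt n$ because $\Re\widetilde F\sim n$ near $z=1$ --- at the price of a cancellation condition forcing $\phi_n-\phi_n^*$ to be small (of order $|1-z|\,|Q_m|$) exactly where $\widetilde F$ is large. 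Without this extra degree of freedom coming from the tail $\widetilde F$, your scheme can never beat the constant $\delta^{-1/2}$.

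A secondary but also genuine defect is your step (2). Taking $\phi_n^*=P+P^*$ does not give the zero-location requirement ``for free'': by Lemma \ref{noli} such a polynomial has all its zeroes \emph{on} the unit circle, whereas Lemma \ref{vspomag} requires the zeroes of $\phi_n$ to lie strictly inside $\mathbb{D}$ (equivalently $\phi_n^*\ne 0$ on $\overline{\mathbb{D}}$), and zeroes of $\phi_n^*$ on $\mathbb{T}$ generically make the normalization integral $\int_{\mathbb{T}}|\phi_n^*|^{-2}\,d\theta$ divergent. This is precisely why the paper takes $\phi_n^*=P_m+Q_m+Q_m^*$ with the additional correction $P_m$: its role is to give the sum a strictly positive real part on $\mathbb{T}$, thereby pushing the zeroes of $Q_m+Q_m^*$ off the closed disk, while being small enough not to destroy the size and normalization estimates.
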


{\bf Remark.} We are not trying to track the dependence of
$C(\delta)$ on $\delta$ as $\delta\to 0$ and we do not attempt to make $\delta_0=1$.

\begin{proof}
We first need to rewrite the problem in more convenient form.

\begin{lemma}
To prove (\ref{osnova}), it is sufficient to find a polynomial  $\phi_n^*$ and a Herglotz
function $\widetilde F$ which satisfy the following properties
\begin{itemize}
\item[1.] $\phi_n^*(z)$ has no roots in $\mathbb{D}$.

\item[2.] Normalization
\begin{equation}\label{norma}
\int_{\mathbb{T}} |\phi_n^*(z)|^{-2}d\theta =2\pi
\end{equation}

\item[3.] Large uniform norm, i.e.
\[
|\phi^*_n(1)|\sim \sqrt{{n}}
\]

\item[4.]  $\widetilde F\in C^{\infty}(\mathbb{T})$,\, $\Re \widetilde F>0$ on $\mathbb{T}$,
and
\begin{equation}\label{norka}
\frac{1}{2\pi}\int_{\mathbb{T}}\Re \widetilde F(e^{i\theta}) d\theta  =1
\end{equation}
Moreover,
\begin{equation}
|\phi^*_n(z)|+|\widetilde F(z)(\phi_n(z)-\phi_n^*(z))|<C_1(\delta) \Bigl(\Re
\widetilde F(z)\Bigr)^{1/2} \label{main}
\end{equation}
uniformly in $z\in \mathbb{T}$.
\end{itemize}
\end{lemma}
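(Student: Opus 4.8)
The plan is to build, out of the objects in 1--4, an explicit competitor measure $\sigma\in S_\delta$ whose $n$-th orthonormal polynomial is the given $\phi_n:=(\phi_n^*)^*$; then, since $M_{n,\delta}=\sup_{\sigma\in S_\delta}|\phi_n(1,\sigma)|$ by the Remark following (\ref{6}) and $|\phi_n^*(1)|=|\phi_n(1)|$ on $\mathbb T$, property~3 gives at once $M_{n,\delta}\ge|\phi_n(1,\sigma)|=|\phi_n^*(1)|\sim\sqrt n$, which is (\ref{osnova}). First I would note that property~1 together with the integrability forced by (\ref{norma}) makes $\phi_n^*$ zero-free on all of $\overline{\mathbb D}$, so $\phi_n$ has its $n$ zeros inside $\mathbb D$; and (\ref{norma}) is exactly the normalization in Lemma~\ref{vspomag}. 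Hence $\phi_n$ really is the $n$-th orthonormal polynomial of the Bernstein--Szeg\H{o} measure $d\mu_n=d\theta/(2\pi|\phi_n^*|^2)$. Let $\psi_n,\psi_n^*$ be the corresponding second-kind polynomials, so $F_n:=\psi_n^*/\phi_n^*$ is the Carath\'eodory function of $\mu_n$; reading off its boundary real part via the Poisson kernel gives the identity $\Re(\psi_n^*\overline{\phi_n^*})\equiv 1$ on $\mathbb T$, which, together with the standard relation $\phi_n\psi_n^*+\phi_n^*\psi_n=2z^n$ (see \cite{sim1}), will carry all the algebra below.

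Next I would use $\widetilde F$ to perturb $\mu_n$ without moving $\phi_n$. Subtracting $i\,\Im\widetilde F(0)$ (which changes neither $\Re\widetilde F$ nor (\ref{norka}) nor (\ref{main})) I may assume $\widetilde F(0)=1$, so that $f(z):=z^{-1}\bigl(\widetilde F(z)-1\bigr)\big/\bigl(\widetilde F(z)+1\bigr)$ is a Schur function: it is analytic in $\mathbb D$ since the numerator vanishes at $0$, $|f|\le 1$ there because $\widetilde F$ maps into the right half-plane, and $|f|<1$ on $\mathbb T$ because $\Re\widetilde F>0$. I then define $\sigma$ through its Carath\'eodory function
\[
F_\sigma(z):=\frac{\psi_n^*(z)+z\psi_n(z)f(z)}{\phi_n^*(z)-z\phi_n(z)f(z)}\,.
\]
The things to check are: (i) the denominator equals $\phi_n^*\bigl(1-z(\phi_n/\phi_n^*)f\bigr)$, and $\phi_n/\phi_n^*$ is a finite Blaschke product, so the denominator is zero-free on $\overline{\mathbb D}$ and $F_\sigma$ is analytic in $\mathbb D$, continuous on $\overline{\mathbb D}$; (ii) a one-line boundary computation, using $\overline{\phi_n^*}=z^{-n}\phi_n$ on $\mathbb T$ and the Wronskian identity, gives $\Re F_\sigma|_{\mathbb T}=(1-|f|^2)\big/|\phi_n^*-z\phi_n f|^2\ge 0$, hence $\Re F_\sigma\ge 0$ in $\mathbb D$ by the minimum principle, and $F_\sigma(0)=\psi_n^*(0)/\phi_n^*(0)=1$, so (as $\Re F_\sigma$ extends continuously to $\mathbb T$) $\sigma$ is a purely absolutely continuous probability measure with $\sigma'(\theta)=\tfrac1{2\pi}(1-|f|^2)\big/|\phi_n^*-e^{i\theta}\phi_n f|^2$; (iii) again by the Wronskian identity, $F_\sigma-F_n=2z^{n+1}f\big/\bigl(\phi_n^*(\phi_n^*-z\phi_n f)\bigr)$, so $F_\sigma$ and $F_n$ share their first $n+1$ Taylor coefficients, i.e. $\sigma$ and $\mu_n$ share their first $n+1$ moments, whence $\phi_n(\cdot,\sigma)=\phi_n$ by the determinantal formula for $\Phi_n$.

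Finally I would invoke (\ref{main}) to see $\sigma\in S_\delta$. Substituting the definition of $f$ and using $|w+1|^2-|w-1|^2=4\Re w$, the density on $\mathbb T$ becomes
\[
\sigma'(\theta)=\frac{2\,\Re\widetilde F(e^{i\theta})}{\pi\,\bigl|(\phi_n^*+\phi_n)+\widetilde F(\phi_n^*-\phi_n)\bigr|^2}\,;
\]
by the triangle inequality and $|\phi_n^*+\phi_n|\le 2|\phi_n^*|$ on $\mathbb T$ the denominator is $\le 4\bigl(|\phi_n^*|+|\widetilde F(\phi_n-\phi_n^*)|\bigr)^2$, which by (\ref{main}) is $<4\,C_1(\delta)^2\,\Re\widetilde F$; hence $\sigma'>1/\bigl(2\pi C_1(\delta)^2\bigr)\ge\delta/(2\pi)$ for the constant furnished by the construction of Section~\ref{sec.3}, i.e. $\sigma\in S_\delta$, while (\ref{norka}) is just the consistency $\sigma(\mathbb T)=\Re F_\sigma(0)=1$. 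Combining everything yields (\ref{osnova}). The only non-bookkeeping step here is (ii)--(iii) --- pinning down the boundary density of $\sigma$ as $(1-|f|^2)/|D|^2$ and the moment match to order $n+1$ --- and both rest on the two orthogonal-polynomial identities stated above. The genuinely hard part of the paper is of course the construction of a pair $(\phi_n^*,\widetilde F)$ that actually satisfies 1--4, in particular the delicate inequality (\ref{main}); that is the subject of the rest of Section~\ref{sec.3} and is taken for granted here.
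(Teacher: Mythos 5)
Your proposal is correct, and it verifies the reduction by a genuinely different route, even though the competitor measure you build is ultimately the same one as in the paper. The paper glues the data together at the level of Schur (Verblunsky) coefficients: it takes $\widetilde\sigma$ with density $\Re\widetilde F/(2\pi)$, concatenates the $n$ coefficients determined by $\phi_n$ with those of $\widetilde\sigma$, and then needs Baxter's theorem (to know $\sigma$ is purely a.c.\ with positive continuous density), the transfer-matrix identity (\ref{intert}), and a Szeg\H{o}-asymptotics limit $m\to\infty$ to reach $2\Pi=\widetilde\Pi\bigl(\phi_n+\phi_n^*+\widetilde F(\phi_n^*-\phi_n)\bigr)$, from which $\sigma'$ is read off. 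You instead define $\sigma$ at once through its Carath\'eodory function $F_\sigma=(\psi_n^*+z\psi_n f)/(\phi_n^*-z\phi_n f)$, with $f$ the Schur function attached to $\widetilde F$, and check everything from the Wronskian identity $\phi_n\psi_n^*+\phi_n^*\psi_n=2z^n$ and the Bernstein--Szeg\H{o} theorem: nonnegativity and the boundary density $(1-|f|^2)/|\phi_n^*-z\phi_n f|^2$, which after substituting $f$ is precisely the paper's expression $2\Re\widetilde F/\bigl(\pi|\phi_n+\phi_n^*+\widetilde F(\phi_n^*-\phi_n)|^2\bigr)$, and the $O(z^{n+1})$ difference $F_\sigma-F_n$, whose moment matching replaces the coefficient-concatenation step and yields $\phi_n(\cdot,\sigma)=\phi_n$. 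What this buys is a more elementary, self-contained argument: no Baxter's theorem, no Wiener class, no limit of Bernstein--Szeg\H{o} approximants; the only use of the regularity of $\widetilde F$ is $|f|<1$ on $\mathbb{T}$. Two minor caveats, neither fatal: subtracting $i\Im\widetilde F(0)$ does change the left-hand side of (\ref{main}) (it alters $|\widetilde F|$), so strictly the constant becomes $C_1(\delta)(1+2|\Im\widetilde F(0)|)$ unless one adds the harmless normalization $\Im\widetilde F(0)=0$ to the hypotheses; the paper's proof implicitly makes the same normalization (it treats $\widetilde F$ as the Carath\'eodory function of $\widetilde\sigma$, i.e.\ $\widetilde F_m\to\widetilde F$), and the $\widetilde F$ constructed later indeed has $\widetilde F(0)=1$. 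Likewise, your final bound $\sigma'>1/\bigl(2\pi C_1^2(\delta)\bigr)$ needs $C_1(\delta)\le\delta^{-1/2}$, exactly the choice the paper makes.
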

\begin{proof}
By lemma \ref{vspomag}, the first two conditions guarantee that $\phi_n(z)$ is orthonormal
polynomial of some probability measure and it also determines its first $n$
Schur parameters: $\gamma_0,\ldots, \gamma_{n-1}$. The third gives the necessary growth. Next,
 let
us show that the fourth condition is sufficient for the existence of
a measure $\sigma\in S_\delta$ for which $\phi_n$ is the $n$--th  orthonormal
polynomial.

Notice that $\widetilde F$ defines the corresponding probability measure $\widetilde \sigma$
 which is purely absolutely continuous and has positive smooth density $\widetilde\sigma'$ given
  by
\begin{equation}\label{fefl}
\widetilde\sigma'(\theta)=\frac{\Re\widetilde F(e^{i\theta})}{2\pi}
\end{equation}
Denote its Schur parameters by $\{\widetilde \gamma_j\}, \, j=0,1,\ldots$ and the orthonormal
 polynomials of the first and second kind by $\{\widetilde \phi_j\}$ and
 $\{\widetilde \psi_j\}, \, j=0,1,\ldots$, respectively.  By Baxter's theorem \cite{sim1}
 we have $\widetilde\gamma_j\in \ell^1$ (in fact, the decay is much stronger but $\ell^1$
 is enough for our purposes). Then, let us consider the measure $\sigma$  which has the
  following Schur parameters
\[
\gamma_0,\ldots,\gamma_{n-1},\widetilde \gamma_0, \widetilde \gamma_1,\ldots
\]
We will show that this measure satisfies the Steklov's condition.
Denote
\begin{equation}\label{otoj}
\gamma_n=\widetilde \gamma_0, \gamma_{n+1}=\widetilde \gamma_1, \ldots
\end{equation}
The Baxter's theorem yields that $\sigma$ is purely a.c., $\sigma'$ belongs to the Wiener's
 class $W(\mathbb{T})$, and $\sigma'$ is positive on $\mathbb{T}$.
The first $n$ orthonormal polynomials corresponding to the measure $\sigma$ will be
$\{\phi_j\}, \, j=0,\ldots, n-1$.
Let us compute the polynomials $\phi_j$ and $\psi_j$ (orthonormal with respect to $\sigma$)
 for the indexes $j>n$.
Since the second kind polynomials correspond to Schur parameters $\{-\gamma_j\}$ (
see (\ref{secon})), the recursion can be rewritten in the following matrix form
\begin{equation}\label{m-ca}
\left(
\begin{array}{cc}
\phi_{n+m} & \psi_{n+m}\\
\phi_{n+m}^* & -\psi_{n+m}^*
\end{array}
\right)=\left(
\begin{array}{cc}
A_m & B_m\\
C_m & D_m
\end{array}
\right)\left(
\begin{array}{cc}
\phi_{n} & \psi_{n}\\
\phi_{n}^* & -\psi_{n}^*
\end{array}
\right)
\end{equation}
where $A_m, B_m, C_m, D_m$ satisfy
\begin{eqnarray*}
\left(
\begin{array}{cc}
A_0 & B_0\\
C_0 & D_0
\end{array}
\right)=\left(
\begin{array}{cc}
1 & 0\\
0 & 1
\end{array}
\right),\hspace{6cm} \\ \left(
\begin{array}{cc}
A_m & B_m\\
C_m & D_m
\end{array}
\right)=\frac{1}{\widetilde \rho_0\cdot\ldots\cdot\widetilde\rho_{m-1}}\left(
\begin{array}{cc}
z & -\widetilde\gamma_{m-1}\\
-z\widetilde\gamma_{m-1} & 1
\end{array}
\right)\cdot\ldots\cdot\left(
\begin{array}{cc}
z & -\widetilde\gamma_0\\
-z\widetilde\gamma_0 & 1
\end{array}
\right)
\end{eqnarray*}
and thus depend only on $\gamma_{n}, \ldots, \gamma_{n+m-1}$
(i.e., $\widetilde\gamma_{0}, \ldots, \widetilde\gamma_{m-1}$ by (\ref{otoj})).
Moreover, we have
\[
\left(
\begin{array}{cc}
\widetilde\phi_m & \widetilde\psi_m\\
\widetilde\phi_m^* & -\widetilde\psi^*_m
\end{array}
\right)= \left(
\begin{array}{cc}
A_m & B_m\\
C_m & D_m
\end{array}
\right)\left(
\begin{array}{cc}
1 & 1\\
1 & -1
\end{array}
\right)
\]
Thus,
$
A_m=(\widetilde \phi_m+\widetilde \psi_m)/2, B_m=(\widetilde\phi_m-\widetilde \psi_m)/2,C_m=(\widetilde\phi^*_m-\widetilde\psi^*_m)/2, D_m=(\widetilde\phi^*_m+\widetilde\psi^*_m)/2
$
and substitution into (\ref{m-ca}) yields
\begin{equation}\label{intert}
2\phi_{n+m}^*=\phi_n(\widetilde\phi_m^*-\widetilde\psi^*_m)+\phi_n^*(\widetilde\phi_m^*+\widetilde\psi^*_m)=
\widetilde\phi_m^*\left(\phi_n+\phi_n^*+\widetilde
F_m(\phi_n^*-\phi_n)\right)
\end{equation}
where
\[
\widetilde F_m(z)=\frac{\widetilde\psi^*_m(z)}{\widetilde\phi^*_m(z)}
\]
Since $\{\widetilde\gamma_n\}\in \ell^1$ and $\{\gamma_n\}\in \ell^1$, we have
(\cite{sim1}, p.225)
\[
\widetilde F_m\to \widetilde F \,\, {\rm as\,\,}  m\to\infty\,\,\,{\rm and\,\,}
\phi_n^*\to \Pi, \,\,\widetilde \phi_n^*\to\widetilde\Pi \,\,{\rm as}\, n\to\infty
\]
uniformly on $\overline{\mathbb{D}}$. The functions $\Pi$ and $\widetilde\Pi$
are the Szeg\H{o} functions of $\sigma$ and $\widetilde\sigma$, respectively,
i.e. they are outer functions in $\mathbb{D}$ that give the factorization
\begin{equation}\label{facti}
|\Pi|^{-2}=2\pi\sigma', \quad |\widetilde \Pi|^{-2}=2\pi\widetilde \sigma'
\end{equation}
In (\ref{intert}), send $m\to\infty$ to get
\begin{equation}\label{facti1}
2\Pi= \widetilde\Pi\left(\phi_n+\phi_n^*+\widetilde
F(\phi_n^*-\phi_n)\right)
\end{equation}
Thus, the first formula in (\ref{facti}) shows that in the category of sufficiently
regular measures the
Steklov's condition $\sigma'>\delta/(2\pi)$ is equivalent to
\begin{equation}\label{oc-1}
\left|\widetilde \Pi\left(\phi_n+\phi_n^*+\widetilde
F(\phi_n^*-\phi_n)\right)\right|\leq \frac{2}{\sqrt{\delta}}, \quad z=e^{i\theta}\in \mathbb{T}
\end{equation}
Since $|\phi_n|=|\phi_n^*|$ on $\mathbb{T}$, we have
\[
\left|\widetilde \Pi\left(\phi_n+\phi_n^*+\widetilde
F(\phi_n^*-\phi_n)\right)\right|\leq 2|\widetilde \Pi|\left(|\phi^*_n|+|\widetilde F(\phi^*_n-\phi_n)|\right)<2C_1(\delta) |\widetilde \Pi|\left(\Re \widetilde F\right)^{1/2}=2C_1(\delta)
\]
due to (\ref{main}), (\ref{fefl}), and the second formula in (\ref{facti}). Thus,
to guarantee (\ref{oc-1}), we only need to take $C_1(\delta)=\delta^{-1/2}$ in (\ref{main}).
 In this paper, we assume $\delta$ to be fixed so the exact formulas for $C(\delta)$
 and $C_1(\delta)$ will not be needed.
\bigskip
\end{proof}
Having rewritten the Steklov's condition, we only need to present the
polynomial $\phi_n^*$ and the function $\widetilde F$ which satisfy the
conditions given in the lemma.
Take $\epsilon_n=n^{-1}$.

{\bf 1.  Choice of $
\widetilde F$.} Consider two parameters: $\alpha\in (1/2,1)$ and $\rho\in (0,\rho_0)$ where $\rho_0$ is sufficiently small. Take
\[
\widetilde F(z)=\widetilde C_n\left(\rho(1+\epsilon_n-z)^{-1}+(1+\epsilon_n - z)^{-\alpha}\right),
\]
where the positive constant $\widetilde C_n$ will be chosen later.
Clearly $\widetilde F$ is smooth and has a positive real part in $\overline{\mathbb{D}}$.
Notice that for $z=e^{i\theta}\in \mathbb{T}$ and $\theta\sim 0$ we
have
\[
1+\epsilon_n-z=\epsilon_n+\frac{\theta^2}{2}-i\theta+O(\theta^3)
\]
and so
\begin{equation}\label{herglotz}
\widetilde
F(e^{i\theta})=\widetilde C_n\left(\frac{\rho\epsilon_n}{\epsilon_n^2+\theta^2}+i\frac{\rho\theta}{\epsilon_n^2+\theta^2}+(1+\epsilon_n-z)^{-\alpha}+O\left(\frac{|\theta|}{\epsilon_n+|\theta|}\right)\right)
\end{equation}
For small fixed $\upsilon$ and $\theta\in (-\upsilon,\upsilon)$ we have
\begin{equation}\label{j1}
(1+\epsilon_n-e^{i\theta})^{-\alpha}\sim (\epsilon_n^2+\theta^2)^{-\alpha/2}\exp(-i\alpha \Gamma_n(\theta))
\end{equation}
\begin{equation}\label{j2}
\Gamma_n(\theta)=-\arctan \left( \frac{\sin\theta}{1+\epsilon_n-\cos\theta}\right)\in (-\pi/2, \pi/2)
\end{equation}
The following bound is true
\[
\left\|\frac{\epsilon_n}{\epsilon_n^2+\theta^2}+|(1+\epsilon_n-z)^{-\alpha}|+O\left(\frac{|\theta|}{\epsilon_n+|\theta|}\right)\right\|_{L^1[-\pi,\pi]}\sim C_2(\alpha,\rho)
\]
uniformly in $n$.  Then, we choose $\widetilde C_n$ to guarantee (\ref{norka}) and then $\widetilde C_n\sim C_2(\alpha,\rho)$ uniformly in $n$.

For $|\theta|<\epsilon_n$
\begin{equation}\label{tit1}
|\widetilde F|\sim \epsilon_n^{-1}= n
\end{equation}
and for $|\theta|>\epsilon_n$
\begin{equation}\label{tit2}
|\widetilde F|\sim |
\theta|^{-1}
\end{equation}
\bigskip

{\bf 2. Choice of $\phi_n^*$.} Let $\phi_n^*$ be as follows
\[
\phi_n^*(z)=C_n f_n(z), \, f_n(z)=P_m(z)+Q_m(z)+Q_m^*(z)
\]
where $P_m$ and $Q_m$ are certain polynomials of degree
\begin{equation}\label{propa}
m=[\delta_1
n]
\end{equation}
 where $\delta_1$ is small and will be chosen later.  Notice here that $Q_m^*$ is defined by applying the $n$--th order star operation. The constant $C_n$ will be chosen in such a way that
\[
\int_{-\pi}^\pi |\phi^*_n|^{-2}d\theta=2\pi
\]
(i.e. (\ref{norma}) is satisfied). To prove the theorem, we only need to show that
\begin{equation}\label{si-n}
C_n=\left(\int_{-\pi}^\pi |f_n|^{-2}d\theta\right)^{1/2}\sim 1
\end{equation}
uniformly in $n$ and that $f_n$ satisfies the other conditions of the lemma.
\bigskip

 Consider the Fejer kernel
\begin{equation}\label{feya}
\cal{F}_m(\theta)=\frac{1}{m}\frac{\sin^2(m\theta/2)}{\sin^2(\theta/2)}=\frac{1}{m}\frac{1-\cos(m\theta)}{1-\cos(\theta)}, \quad \cal{F}_m(0)=m
\end{equation}
and the Taylor approximation to the function
$
(1-z)^{-\alpha}
$,
i.e.
\[
R_{(k,\alpha)}(z)=c_0+\sum_{j=1}^k c_jz^j
\]
(see Appendix B for the detailed discussion).
We define $Q_m$ as an analytic polynomial without zeroes in
$\mathbb{D}$ which gives Fejer-Riesz factorization
\begin{equation}\label{fact}
|Q_m(z)|^2=\cal{G}_m(\theta)+|R_{(m,\alpha/2)}(e^{i\theta})|^2
\end{equation}
\begin{equation}\label{sdvig}
\cal{G}_m(\theta)=\cal{F}_m(\theta)+\frac
12\cal{F}_m\left(\theta-\frac{\pi}{m}\right)+\frac
12\cal{F}_m\left(\theta+\frac{\pi}{m}\right)
\end{equation}
Clearly, the right hand side of (\ref{fact}) is positive
trigonometric polynomial of degree $m$ so this factorization is
possible and $Q_m$ is unique up to a unimodular factor. We choose this factor in such a way that $Q_m(0)>0$, i.e.
\begin{equation}\label{mult-mult}
Q_m(z)=\exp\left(\frac{1}{2\pi} \int C(z,e^{i\theta})\log |Q_m(e^{i\theta})|d\theta\right)
\end{equation}
Notice that $Q_m^*$ is a polynomial of degree $n$ with positive leading coefficient. Since $|Q_m(e^{i\theta})|$ is even in $\theta$, the multiplicative representation shows that $H(z)=\log Q_m(z)$ is analytic in $\mathbb{D}$ and has real Taylor coefficients (indeed, $\overline{H(z)}=H(\overline{z})$). That, on the other hand, implies that $Q_m(z)=e^{H(z)}$ has real coefficients as well.

For $P_m$, we take
\begin{equation}\label{ppp}
P_m(z)=Q_m(z)(1-z)(1-0.1R_{(m,-(1-\alpha))}(z))
\end{equation}
and $\deg P_m=2m+1<n$ by the choice of small $\delta_1$. Consequently, $\deg \phi^*_n=n$.\bigskip

Now that we have chosen $\widetilde F$ and $\phi_n^*$, it is left to show that they satisfy the conditions of the lemma.

{\bf 1. $f_n$ has no zeroes in $\overline{\mathbb{D}}$}.
For $f_n$, we can write
\begin{equation}\label{efn}
f_n=Q_m\left(
(1-z)(1-0.1R_{(m,-(1-\alpha))}(z))+1+z^ne^{-2i\phi}
\right), \quad z\in \mathbb{T}
\end{equation}
where
\[
e^{-2i\phi}=\frac{\overline{Q_m}}{Q_m}
\]
so $\phi$ is an argument of $Q_m$.
The polynomial $Q_m$ has no zeroes in $\mathbb{D}$ and
\begin{equation}\label{dunduk}
(1-z)(1-0.1R_{(m,-(1-\alpha))}(z))+1+\frac{Q_m^*}{Q_m}
\end{equation}
is analytic in $D$ and has positive real part. Indeed,
\[
\Re \left( 1+\frac{Q_m^*}{Q_m}\right)\geq 0, \quad z\in \mathbb{T}
\]
since $|Q_m|=|Q^*_m|$.

Since $Q_m$ has real coefficients, $Q_m(1)$ is real and so $\phi(0)=0$ and
\[
\Re \left( 1+\frac{Q_m^*}{Q_m}\right)=2, \quad z=1
\]
 For the first term in (\ref{dunduk}), we have
\begin{equation}\label{parts}
\Re(1-z)(1-0.1R_{(m,-(1-\alpha))}(z))=(1-\cos\theta)(1-0.1X)-0.1Y\sin\theta
\end{equation}
where
\[
X=\Re R_{(m,-(1-\alpha))},\quad Y=\Im R_{(m,-(1-\alpha))}
\]
Notice that $|R_{(m,-(1-\alpha))}|<3$ for $z\in \mathbb{D}$ so $|X|<3$ in $\mathbb{D}$ as well. The function $Y$ is odd in $\theta$ and $Y(\theta)<0$ for $\theta>0$ (as follows from the lemma \ref{poly1} in Appendix A).
Thus, the function $f_n(z)$ has a positive real part on $\mathbb{T}$ and, in particular,
has no zeroes in $\mathbb{D}$. We conclude then that $\phi_n^*$ has
no zeroes in $\mathbb{D}$.\smallskip

{\bf Remark.} The polynomial $Q_m+Q_m^*$ has all zeroes on the unit
circle as follows from lemma \ref{noli}. The relatively small correction $P_m$ moves them away from
$\overline{\mathbb{D}}$.\bigskip

{\bf 2. The growth at $z=1$.}  The
(\ref{efn}) implies
\[
f_n(1)=2Q_m(1)
\]
Then, (\ref{feya}) and (\ref{fact}) yield
\[
|f_n(1)|\gtrsim \sqrt m\sim \sqrt n
\]
due to (\ref{propa}).\bigskip

{\bf 3. Steklov's condition.}
We need to check (\ref{main}) with $\phi_n$ replaced by $f_n$.  From  (\ref{fact}) and (\ref{efn}), we get
\[
|f_n|^2\lesssim
|Q_m|^2=\cal{G}_m(\theta)+|R_{(m,\alpha/2)}(e^{i\theta})|^2
\]
From  lemma \ref{poly2}
\[
|R_{(m,\alpha/2)}(e^{i\theta})|^2\lesssim (\epsilon_m+|\theta|)^{-\alpha}\lesssim  (\epsilon_n+|\theta|)^{-\alpha}
\]
The exact form of the Fejer's kernel (\ref{feya}) gives
\[
\cal{G}_m(\theta)\lesssim  \frac{m}{m^2\theta^2+1} \lesssim  \frac{n}{n^2\theta^2+1}=\frac{\epsilon_n}{\epsilon_n^2+\theta^2}
\]
The bounds (\ref{herglotz}), (\ref{j1}), and (\ref{j2}) give
\[
\Re\widetilde F\sim \frac{\epsilon_n}{\epsilon_n^2+\theta^2}+(\epsilon_n^2+\theta^2)^{-\alpha/2}
\]
Therefore,
\begin{equation}\label{vtoro}
|f_n|\lesssim |Q_m|\lesssim  (\Re \widetilde F)^{1/2}
\end{equation}
Then, for the second term in (\ref{main}), we get
\[
|\widetilde F(f_n-f_n^*)|^2=
|\widetilde F(P_m-P_m^*)|^2\lesssim |\widetilde
F(1-z)|^2|Q_m|^2
\]
The uniform bounds
\[
|\widetilde F(1-z)|\lesssim 1,\,
|Q_m|^2\lesssim \Re \widetilde F
\]
together with (\ref{vtoro}), imply (\ref{main}) with $\phi_n$ replaced by $f_n$.
\bigskip

{\bf 4. Normalization.}
We only need to check now that
\[
\int_{\mathbb{T}}\frac{1}{|f_n|^2}d\theta\sim 1
\]
(see (\ref{si-n})). We only need to consider $\theta\in [0,
\upsilon]$ where $\upsilon$ is small and fixed as $
f_n(\theta)$ is even and
\[
|f_n|>C_5(\upsilon), \quad \theta\in [\upsilon,\pi]
\]
as follows from   (\ref{fact}), (\ref{efn}),  and (\ref{parts}).

Notice that (\ref{fact}) yields
\[
|Q_m(e^{i\theta})|^2\geq |R_{(m,\alpha/2)}(e^{i\theta})|^2
\]
and the lemma \ref{poly2}  from Appendix A
 yields
\[
|f_n|^2\gtrsim (m^{-1}+|\theta|)^{-\alpha}\left|
(1-z)(1-0.1R_{(m,-(1-\alpha)}(z))+1+z^ne^{2i\phi}\right|^2
\]
Then, the representation (\ref{parts}) gives
\begin{equation}\label{xyz}
\left|
(1-z)(1-0.1R_{(m,-(1-\alpha)}(z))+1+z^ne^{2i\phi}\right|^2=
\end{equation}
\[
\Bigl((1-\cos\theta)(1-0.1X)-0.1Y\sin\theta +1+\cos(n\theta-2\phi)\Bigr)^2+
\]
\[
\Bigl(  -0.1Y(1-\cos\theta)-(1-0.1X)\sin\theta+\sin(n\theta-2\phi)\Bigr)^2
\]
For the first term, we have
\[
(1-\cos\theta)(1-0.1X)-0.1Y\sin\theta +1+\cos(n\theta-2\phi)\geq -0.1Y\sin\theta\gtrsim \theta^{2-\alpha}
\]
where the last inequality follows from lemma \ref{poly1}. Thus,
\begin{equation}\label{inv}
|f_n|^2\gtrsim (m^{-1}+|\theta|)^{-\alpha}\left[\Bigl(
(\theta^{2-\alpha}) ^2 + \Bigl(
\Psi(\theta)+\sin(n\theta-2\phi)\Bigr)^2 \right]
\end{equation}
where
\[
\Psi= -0.1Y(1-\cos\theta)-(1-0.1X)\sin\theta
\]
The lemma \ref{der-der} gives
\[
|\Psi'(\theta)|\lesssim 1
\]
uniformly in $n$ and $\Psi(0)=0$ implies
\[
|\Psi(\theta)|\lesssim \theta
\]
by integration.

For the phase $\phi$, we have $\phi(0)=0$  and
\[
|\phi'(\theta)|\lesssim m
\]
as proved in Appendix B. Since we have the derivative of $\phi$ under control, making
$\delta_1$ small we can make sure that the function
$
n\theta-2\phi(\theta)
$
is monotonically increasing and
\[
n/2<(n\theta-2\phi(\theta))'<2n
\]
Denote the consecutive zeroes of
\[ \Psi(\theta)+\sin(n\theta-2\phi(\theta))\]
on the interval $[0,\upsilon]$
by $\{\theta_j\}$: $0=\theta_0<\theta_1<\ldots<\theta_k\leq \upsilon$. These zeroes can be found from two sets of equations
\[
F_1(\theta_{2j})= 2j\pi, \quad j=0,1,\dots,N_1; \quad F_1(\theta)=n\theta-2\phi(\theta)+\arcsin \Psi(\theta)
\]
and
\[
F_2(\theta_{2j-1})=(2j+1)\pi, \quad j=0,1,\dots,N_2; \quad F_2(\theta)=n\theta-2\phi(\theta)-\arcsin \Psi(\theta)
\]
Since
\[
n/2<F'_{1(2)}<2n \quad{\rm and }\quad F_{1(2)}(0)=0
\]
these $\{\theta_j\}$ exist, $N_{1(2)}\sim n$, and
\begin{equation}\label{porovnu}
\frac{1}{20 n}<|\theta_{j+1}-\theta_j|<\frac{20}{n}, \quad \theta_j\sim \frac{j}{n}
\end{equation}

Now, consider the intervals $I_0=[0,0.01n^{-1}]$, $I_j=[\theta_j-0.01n^{-1}, \theta_j+0.01n^{-1}], j=1,\ldots, k$ centered around $\{\theta_j\}$. For $\theta\in I_j$, we have
\[
|\Psi(\theta)+\sin(n\theta-2\phi(\theta))|=\Bigl|\int_{\theta_j}^\theta  \Bigl(\Psi'(\xi)+\cos(n\xi-2\phi(\xi))(n-2\phi'(\xi)\Bigr)d\xi\Bigr|\sim n|\theta-\theta_j|
\]
and
\[
|\Psi(\theta)+\sin(n\theta-2\phi(\theta))|\sim 1
\]
outside $\cup_{j} I_j$.

Therefore, from (\ref{inv}), we have
\[
\|f_n^{-1}\|_2^2\lesssim \int\limits_{[0,\upsilon]\cap (\cup_j
I_j)^c} \theta^{\alpha}d\theta+\sum_{j=1}^{Cn}
\theta_j^{\alpha}\int_0^{Cn^{-1}}
\frac{d\theta}{(\theta_j^{2-\alpha})^2+n^2\theta^2}
\]
\[
\lesssim 1+\frac 1n \sum_{j=1}^{Cn} \theta_j^{-2+2\alpha}\sim 1+\int_0^1
\theta^{2\alpha-2}d\theta\lesssim 1
\]
where we used (\ref{porovnu}) and  $\alpha\in (1/2,1)$.
\end{proof}
{\bf Remark.} Although the limitation $\alpha>1/2$ seems rather artificial, we believe it is the range of $\alpha\in (1-\epsilon,1)$ with small $\epsilon$ that is essential for the argument to hold.
\vspace{0.5cm}

Our method allows to compute the measure of orthogonality $\sigma$ for which the orthonormal
polynomial has the required size and it is interesting to compare it to the results on the maximizers we obtained before.
This $\sigma$ is purely absolutely continuous. To find it we use (\ref{facti}) and (\ref{facti1}) and get
\[
\sigma'=\frac{4\widetilde\sigma'}{|\phi_n+\phi_n^*+\widetilde F(\phi_n^*-\phi_n)|^2}=\frac{2\Re\widetilde F}{\pi|\phi_n+\phi_n^*+\widetilde F(\phi_n^*-\phi_n)|^2}
\]
This expression is explicit as we know the formulas for all functions involved. We have
\begin{equation}\label{chto}
\sigma^{-1}=\frac{C_n |Q_n|^2}{\Re \widetilde F}\cdot\left| 2(1+e^{i(n\theta-2\phi)})+H_n(e^{i\theta})+e^{i(n\theta-2\phi)}\overline{H_n(e^{i\theta})}+\widetilde F \left(-H_n(e^{i\theta})+e^{i(n\theta-2\phi)}\overline{H_n(e^{i\theta})}\right)  \right|^2
\end{equation}
where
\[
H_n(z)=(1-z)(1-0.1R_{(m,-(1-\alpha))}(z)), \quad C_n\sim 1
\]
as follows from (\ref{ppp}).

Consider the first factor. We can apply (\ref{tit1}), (\ref{tit2}), and lemma \ref{poly2} to get
\[
\frac{|Q_n|^2}{\Re \widetilde F}\sim 1
\]
Recall that $\widetilde F$ is given by
\[
\widetilde F(z)=\widetilde C_n(\rho(1+\epsilon_n-z)^{-1}+(1+\epsilon_n-z)^{-\alpha}), \, \widetilde C_n=(\rho/(1+\epsilon_n)+(1+\epsilon_n)^{-\alpha})^{-1}
\]
where the choice for constant $\widetilde C_n$ comes from the normalization (\ref{norka}).

Substitution into the second factor in (\ref{chto}) gives
\[
(\sigma')^{-1}\sim \left| (2+\overline{H}_n(1+\widetilde F))\left(e^{i(n\theta-2\phi)}+\frac{2+H_n(1-\widetilde F)}{2+\overline H_n (1+\widetilde F)}\right)\right|^2
\]
For $z=e^{i\theta}$ and small positive $\theta$ (negative values can be handled similarly), we have
\[
H_n(e^{i\theta})=-i\theta+0.1\theta i R_{(m,-(1-\alpha))}(e^{i\theta})+O(|\theta|^{2})
\]
and for $R_{(m,-(1-\alpha))}$ lemma \ref{poly1} can be used.
For $\widetilde F$,
\[
\widetilde F(e^{i\theta})=\widetilde C_n\left(  \frac{\rho}{\epsilon_n-i\theta}+\frac{1}{(\epsilon_n-i\theta)^\alpha}
\right)+O(1)
\]
and therefore the first factor
\[
|2+\overline{H}_n(1+\widetilde F)|\sim 1
\]
for $\rho\in(0,\rho_0)$ as long as $\rho_0$ is small.

Consider
\[
J=\frac{2+H_n(1-\widetilde F)}{2+\overline H_n (1+\widetilde F)}
\]
Notice first that $|J|<1$ for $\theta\neq 0$. Indeed,
\begin{eqnarray*}
1-|J|^2=\frac{-|2+H_n(1-\widetilde F)|^2+|2+\overline H_n (1+\widetilde F)|^2}{|2+\overline H_n (1+\widetilde F)|^2}=\frac{4\Re\left((H_n+\overline{H}_n+|H_n|^2)\overline{\widetilde F}\right)}{{|2+\overline H_n (1+\widetilde F)|^2}}=
\\
=\frac{4(\Re \widetilde F)(|H_n|^2+2\Re H_n)}{{|2+\overline H_n (1+\widetilde F)|^2}}
\end{eqnarray*}
and the last expression is positive by the choice of $\widetilde F$ and $H_n$. For $\theta=0$, we have $J(0)=1$.

For small $\theta$, we have asymptotics
\begin{equation}\label{masa1}
1-|J|^2\sim |\theta|^{2-\alpha} \left(\frac{\rho \epsilon_n}{\epsilon_n^2+\theta^2}+|\theta|^{-\alpha}\right), \quad |\theta|>0.1n^{-1}
\end{equation}
and
\[
1-|J|^2\sim \left(  \frac{\rho\epsilon_n}{\epsilon_n^2+\theta^2}+\epsilon_n^{-\alpha}  \right)|\theta|n^{\alpha-1}, \quad |\theta|<0.1n^{-1}
\]
If we write
\[
J^{-1}=r(\theta)e^{i\Upsilon(\theta)}
\]
then
\[
(\sigma')^{-1}\sim  |e^{i(n\theta-2\phi(\theta)+\Upsilon(\theta))}+r(\theta)|^2=(\cos(n\theta-2\phi(\theta)+\Upsilon(\theta))+r(\theta))^2+\sin^2 (n\theta-2\phi(\theta)+\Upsilon(\theta))
\]

Consider solutions $\{\widehat \theta_j\}$ to
\[
n\theta-2\phi(\theta)+\Upsilon(\theta)=\pi+2\pi j, \quad |j|<N_3
\]
that belong to some small fixed arc $|\theta|<\upsilon$. We have $\Upsilon(0)=0$ and the direct estimation gives
\[
|\Upsilon'(\theta)|<0.1n
\]
uniformly for all $\theta$. Indeed, it is sufficient to prove
\[
\left|\partial_\theta\left(\frac{2+H_n(1-\widetilde F)}{2+\overline H_n(1-\overline{\widetilde F})}\right)\right|<0.01n
\]
and
\[
\left|\partial_\theta\left(\frac{2+\overline H_n(1+\widetilde F)}{2+ H_n(1+\overline{\widetilde F})}\right)\right|<0.01n
\]
The both estimates follow from the bounds in lemmas \ref{poly1} and \ref{der-der} by the choice of small $\rho$
(one can actually obtain much better bounds by looking at $\partial_\theta J$ itself).

Now we can argue that the distance between the consecutive $\{\widehat \theta_j\}$ is $\sim n^{-1}$ and the density $\sigma'$ has  spikes of width $\sim n^{-1}$ around these points each carrying the weight $m_j$ where
\[
m_j\lesssim
\int_0^{Cn^{-1}}\frac{d\theta}{(r(\theta_j)-1)^2+n^2\theta^2}\lesssim \frac{1}{n(r(\theta_j)-1)}
\]
and
\[
\sum_{j=-N_3}^{N_3} m_j\lesssim \int_{0.1n^{-1}<|\theta|<\upsilon}
\frac{d\theta}{r(\theta)-1}<\infty
\]
as follows from (\ref{masa1}) and $\alpha\in (1/2,1)$. Away from these spikes the density is $\sim 1$.
In this argument we assumed that $\rho_0$ is sufficiently small.

\vspace{1cm}

\section{ The polynomial entropies and the Steklov
class}

In recent years, a lot of efforts were made (see, e.g., \cite{ap1, ap2, ap3}) to study the  so-called polynomial entropy
\[
\int_\mathbb{T} |\phi_n|^2\log|\phi_n|d\sigma
\]
where $\phi_n$ are orthonormal with respect to $\sigma$.
Since $\sup_{x\in [0,1]} x^2|\log x|<\infty$, this quantity is bounded if and only if
\[
 \int_\mathbb{T}
|\phi_n|^2\log^+|\phi_n|d\sigma
\]
is bounded. The last expression is important as it contains the information on the size of $\phi_n$.
In this section, we consider the following variational problem
\[
\Omega_n(\cal K)=\sup_{\sigma\in \cal{K}} \int_\mathbb{T}
|\phi_n|^2\log^+|\phi_n|d\sigma
\]
where $\phi_n$ is the $n$-th orthonormal polynomial with respect to
$\sigma$ taken in $\cal{K}$, some special class of measures. It is an interesting question to describe those $\cal{K}$ for which $\Omega_n(\cal{K})$ is bounded in $n$. So far, this is known only for very few $\cal{K}$, e.g., the Baxter class of measure. For the Szeg\H{o} class with measures normalized by the $\ell^2$ norm of Schur parameters, the sharp estimate $\Omega_n\sim \sqrt n$ is known \cite{dk}.
In this section, we will obtain the sharp bound on $\Omega_n(S_\delta)$.
\begin{lemma}
If $\delta\in (0,\delta_0)$ and $\delta_0$ is sufficiently small, then
\[
\Omega_n(S_\delta)\sim \log n
\]
\end{lemma}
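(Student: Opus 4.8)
The plan is to prove the two inequalities separately. For the upper bound $\Omega_n(S_\delta)\lesssim \log n$, I would start from the elementary bound $\log^+|\phi_n|\leq \frac{1}{2}\log(1+|\phi_n|^2)$ and use the Steklov condition to pass to Lebesgue measure at a cost of a factor $\delta^{-1}$:
\[
\int_\mathbb{T}|\phi_n|^2\log^+|\phi_n|\,d\sigma\leq \frac{1}{2}\int_\mathbb{T}|\phi_n|^2\log(1+|\phi_n|^2)\,d\sigma.
\]
Since $\phi_n$ is bounded by $\sqrt{(n+1)/\delta}$ (Lemma with \eqref{8}), we have $\log(1+|\phi_n|^2)\lesssim \log n$ uniformly on $\mathbb{T}$, so the integral is at most $(\log n)\int_\mathbb{T}|\phi_n|^2\,d\sigma = \log n$ up to a constant depending on $\delta$, using the normalization $\|\phi_n\|_{2,\sigma}=1$. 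This gives $\Omega_n(S_\delta)\lesssim \log n$ immediately; the only mild care is the $\delta$-dependence, which is harmless since $\delta$ is fixed.

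For the lower bound $\Omega_n(S_\delta)\gtrsim \log n$, the natural route is to exhibit a single measure in $S_\delta$ — the one constructed in the proof of Theorem~\ref{T3} — for which the entropy integral is at least of order $\log n$. Concretely, I would take $\sigma$ to be the absolutely continuous measure produced in Section~\ref{sec.3}, whose density satisfies $\sigma'\sim 1$ away from the spikes, with spikes of width $\sim n^{-1}$ centered at the points $\{\widehat\theta_j\}$, $|j|\lesssim n$. Near $z=1$ (equivalently $\theta=0$), we showed $|\phi_n(1)|\sim\sqrt n$, and in fact the analysis of $|f_n|$ shows $|\phi_n(e^{i\theta})|\sim \theta^{-1/2}$ on a range like $n^{-1}\lesssim|\theta|\lesssim\upsilon$ (this is exactly the size encoded in \eqref{vtoro} combined with the lower bound \eqref{inv}: $|f_n|^2\sim(\epsilon_n+|\theta|)^{-\alpha}\cdot(\text{bounded, }\gtrsim \theta^{2(2-\alpha)}\text{ near the }\theta_j)$, which on the complement of the small intervals $I_j$ gives $|f_n|^2\sim |\theta|^{-\alpha}$, i.e. $|\phi_n|\sim|\theta|^{-\alpha/2}$). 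Then on the bulk part of $\mathbb{T}$ where $\sigma'\sim 1$ and $|\phi_n(e^{i\theta})|\sim|\theta|^{-\alpha/2}$, we estimate
\[
\int_\mathbb{T}|\phi_n|^2\log^+|\phi_n|\,d\sigma\gtrsim \int_{n^{-1}}^{\upsilon}|\theta|^{-\alpha}\cdot\log(|\theta|^{-\alpha/2})\,d\theta\gtrsim \int_{n^{-1}}^{\upsilon}\frac{|\log\theta|}{|\theta|^{\alpha}}\,d\theta,
\]
and since $\alpha\in(1/2,1)$ this integral is of exact order $n^{\alpha-1}\log n$ — which is $o(\log n)$, not $\gtrsim\log n$. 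So the bulk contribution is too small, and the main contribution must come from the spikes.

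The key step, and the one I expect to be the main obstacle, is therefore a careful lower estimate of the spike contributions. On the spike around $\widehat\theta_j$ the density behaves like $\sigma'(\theta)\sim \bigl((r(\theta_j)-1)^2+n^2(\theta-\widehat\theta_j)^2\bigr)\cdot(\text{weight factor})$ — more precisely, from \eqref{chto} and the subsequent analysis, $(\sigma')^{-1}\sim(\cos(\cdots)+r)^2+\sin^2(\cdots)$, so $\sigma'$ is peaked where the phase hits $\pi\pmod{2\pi}$, with peak value $\sim(r(\theta_j)-1)^{-2}$ and integrated mass $m_j\lesssim \frac{1}{n(r(\theta_j)-1)}$. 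Meanwhile, from \eqref{inv} the polynomial on the spike $I_j$ behaves like $|\phi_n(e^{i\theta})|^2\sim \theta_j^{-\alpha}\bigl(\theta_j^{2(2-\alpha)}+n^2(\theta-\theta_j)^2\bigr)^{-1}$; note that the zeros of $\Psi+\sin(\cdots)$ and the peaks of $\sigma'$ need not coincide, and the interplay of the two sets of spike locations is the delicate combinatorial point. On a spike where $|\phi_n|^2$ is of size roughly $\theta_j^{-\alpha}\cdot \theta_j^{-2(2-\alpha)} = \theta_j^{\alpha-4}$... — the cleanest way to avoid getting lost in these competing scales is to instead lower-bound $\int|\phi_n|^2\log^+|\phi_n|\,d\sigma$ using the reproducing-kernel identity and the known growth: since $\sigma\in S_\delta$ and $\int_\mathbb{T}|\phi_n|^2\,d\sigma = 1$ while $\max|\phi_n|\sim\sqrt n$, and since by the Christoffel–Darboux/Máté–Nevai–Totik type estimates the set where $|\phi_n|\gtrsim\sqrt n$ has $\sigma$-measure $\sim n^{-1}$, one gets a contribution $\gtrsim n^{-1}\cdot n\cdot\log\sqrt n\sim\log n$ provided one can show $|\phi_n|^2$ is genuinely of order $n$ on a set of $\sigma$-measure $\gtrsim n^{-1}$. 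Establishing this last fact rigorously from the explicit formula \eqref{chto} — i.e. that the big spike of $\phi_n$ near $\theta=0$ carries $\sigma$-mass bounded below by $c/n$ — is the crux, and it should follow because near $\theta=0$ we have $r(0)=1$, $|\phi_n(1)|\sim\sqrt n$, both $\phi_n$ and $\sigma'$ vary on scale $n^{-1}$, and $\sigma'\sim 1$ there (the spike structure of $\sigma'$ starts only at $|\theta|\gtrsim n^{-1}$), so $\int_{|\theta|<cn^{-1}}|\phi_n|^2\log^+|\phi_n|\,d\sigma\gtrsim n^{-1}\cdot n\cdot\log n\sim\log n$. Wrapping the two bounds together yields $\Omega_n(S_\delta)\sim\log n$.
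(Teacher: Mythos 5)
Your route is the same as the paper's: the upper bound is the trivial one (from $\|\phi_n\|_\infty\lesssim\sqrt{n}$ together with $\int_{\mathbb{T}}|\phi_n|^2\,d\sigma=1$), and the lower bound is extracted from the measure and polynomial built in the proof of Theorem~\ref{T3}, on an interval of length $\sim n^{-1}$ near $\theta=0$ where $|\phi_n|^2\sim n$, which is exactly how the paper gets $n^{-1}\cdot n\cdot\log n\sim\log n$. The one genuine gap is the step you yourself call the crux: you claim $|\phi_n|\gtrsim\sqrt{n}$ on $\{|\theta|<cn^{-1}\}$ because ``$\phi_n$ and $\sigma'$ vary on scale $n^{-1}$'', but nothing in your argument establishes that variation bound for $\phi_n$. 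It can be closed in two ways: (i) as in the paper, on the interval $(0.01n^{-1},\theta_1-0.01n^{-1})$ between the first two zeros of $\Psi+\sin(n\theta-2\phi)$ one has $|\Psi+\sin(n\theta-2\phi)|>C$, hence by (\ref{efn}) $|f_n|\sim|Q_m|$, and the Fej\'er-kernel part of (\ref{fact}) (see (\ref{uh-uh})) gives $|Q_m|^2\gtrsim m/(m^2\theta^2+1)\sim n$ at that scale; or (ii) by Bernstein's inequality on $\mathbb{T}$, $\|\phi_n'\|_\infty\leq n\|\phi_n\|_\infty\lesssim n\sqrt{n/\delta}$ by (\ref{8}), so $|\phi_n(1)|\gtrsim\sqrt{n}$ propagates to the arc $|\theta|\leq c(\delta)n^{-1}$.

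Two further comments. The lower bound on the density needs nothing beyond $\sigma'\geq\delta/(2\pi)$, which holds because the constructed measure lies in $S_\delta$; so the detour through the spike structure of (\ref{chto}), the claim $\sigma'\sim1$ near $0$, and the Christoffel--Darboux/M\'at\'e--Nevai--Totik appeal are unnecessary, and the latter as stated (``the set where $|\phi_n|\gtrsim\sqrt{n}$ has $\sigma$-measure $\sim n^{-1}$'') is only an upper bound via Chebyshev, not the lower bound you need. Also, a harmless slip: for $\alpha\in(1/2,1)$ the bulk integral $\int_{n^{-1}}^{\upsilon}\theta^{-\alpha}|\log\theta|\,d\theta$ is $O(1)$, not of order $n^{\alpha-1}\log n$; either way it is $o(\log n)$, so your conclusion that the bulk cannot produce the logarithm stands.
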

\begin{proof}
If one takes the measure $\sigma$ and the polynomial $\phi_n$
constructed in the previous section, then
\[
\Omega_n(S_\delta)\gtrsim
\int_{\mathbb{T}}|\phi_n|^2\log^+|\phi_n|d\theta\gtrsim 1+
\int_{0.01n^{-1}}^{\theta_1-0.01n^{-1}} |f_n|^2\log^+|f_n|d\theta
\]
where $\theta_1$ was introduced in the proof of the main theorem. On that interval,
$|\Psi(\theta)+\sin(n\theta-2\phi)|>C$ and so $|f_n|\sim |Q_m|$. That follows from (\ref{efn}) and the verification of the normalization condition in the proof of the main theorem. Then, the estimate (\ref{uh-uh}) gives a very rough lower bound
\[
|Q_m(e^{i\theta})|^2\gtrsim  \frac{m}{m^2\theta^2+1}+1
\]
This shows $|Q_m|\sim \sqrt n$ on the interval $(0.01n^{-1},\theta_1-0.01n^{-1})$ and so
\[
\Omega_n(S_\delta)\gtrsim n^{-1}\cdot n\cdot \log n\sim \log n
\]
Therefore the polynomial entropy grows at least as the logarithm. On the other hand, the trivial upper bound
\[
\|\phi_n\|_\infty\lesssim \sqrt{n}
\]
implies that
\[
\Omega_n(S_\delta)\lesssim \log n
\]
\end{proof}

 {\bf Acknowledgement.} The research of S.D. was supported by
NSF grant DMS-1067413. The research of A.A. and D.T. was supported
by the grants RFBR 13-01-12430 OFIm and  RFBR 11-01-00245 and
Program 1 DMS RAS. The hospitality of IMB (Bordeaux) and IHES
(Paris) is gratefully acknowledged by S.D. The authors thank Stas
Kupin and Fedor Nazarov for interesting comments.

\vspace{1cm}

\section{Appendix A}

In this Appendix, we will introduce and study the polynomials that
approximate the function $(1-z)^{-\alpha}$ (used in the formula
(\ref{fact})) and the function  $(1-z)^{1-\alpha}$ (used in the
definition of $P_m$, formula (\ref{ppp})). Notice first, that both
$(1-z)^\beta$ is analytic in $\mathbb{D}$ for any $\beta\in (-1,1)$
and has the positive real part.  For $z=e^{i\theta}\in \mathbb{T}$,
we have
\[
(1-z)^\beta=\Bigl((1-\cos \theta)^2+\sin^2\theta\Bigr)^{\beta/2}\exp(-i\beta L(\theta))
\]
where
\[
L(\theta)=\arctan\left(\frac{\sin\theta}{1-\cos\theta} \right)
\]
and so
\begin{equation}\label{chisto}
(1-z)^\beta=|\theta|^{\beta}(1+O(\theta^2))\exp(-i\beta L(\theta)), \quad L(\theta)\to   \frac{\pi{\rm  sign}(\theta)(1-\theta+O(\theta^2))}{2}, \quad \theta\to 0
\end{equation}
We will now introduce the polynomials that will approximate $(1-z)^\beta$ uniformly on compacts in $\mathbb{D}$ and will
behave on the boundary in a controlled way. We will treat the cases of
positive and negative $\beta$ separately. Let $A_n(z)$ be the $n$-th Taylor
coefficients of $(1-z)^{\beta}$ plus a correction $Mn^{-\beta}$,
i.e.
\[
A_n(z)=Mn^{-\beta}+\sum_{j=1}^n c_j (1-z^j)
\]
and $M$ is some positive constant. The polynomial $R_{(n, -(1-\alpha))}$ in the main text will be taken as $A_n$ with $\beta=1-\alpha\in (0,1/2)$.

 For $B_n(z)$, we choose
$n$--th Taylor coefficient of $(1-z)^{-\beta}$, i.e.
\[
B_n(z)=1+\sum_{j=1}^{n} d_j z^j
\]
and
\[
d_j=\frac{\beta(\beta+1)\ldots(\beta+j-1)}{j!}=C_3(\beta)j^{\beta-1}+O(j^{\beta-2}), \quad C_3(\beta)>0
\]
The polynomial $R_{(n,\alpha/2)}$ used in the main text is $B_n$ with $\beta=\alpha/2\in (1/4,1/2)$.

We need the following simple lemmas.
\begin{lemma}\label{trifle}
For any $a>0$, we have
\[
\int_0^a \frac{\cos x}{x^\gamma}dx>0, \, {\rm if}\quad \gamma\in [1/2,1)
\]
and
\[
 \quad \int_0^a \frac{\sin
x}{x^\gamma}dx>0, \,\int_0^\infty \frac{\sin
x}{x^\gamma}dx>0,\,  {\rm if}\quad \gamma\in (0,1)
\]
\end{lemma}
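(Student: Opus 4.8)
The plan is to handle both the sine and the cosine integrals by the same elementary device. Set $G(a)=\int_0^a x^{-\gamma}\sin x\,dx$ and $F(a)=\int_0^a x^{-\gamma}\cos x\,dx$; since $\gamma\in(0,1)$ these are well defined and $C^1$ on $[0,\infty)$ with $G'(a)=a^{-\gamma}\sin a$ and $F'(a)=a^{-\gamma}\cos a$. From the sign pattern of $\sin$ and $\cos$ one reads off where $G$ and $F$ increase and decrease; the point is that positivity on all of $(0,\infty)$ follows once we know the value at the \emph{first} local minimum, together with the fact that the successive local-minimum values are increasing. The latter is proved by a period-$\pi$ folding trick, and only the single numerical inequality $F(3\pi/2)>0$ requires real work — and it is there that the hypothesis $\gamma\ge 1/2$ enters.

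For the sine integrals: $x^{-\gamma}\sin x\ge 0$ on $[0,\pi]$, so $G(a)>0$ for $a\in(0,\pi]$. For the rest, substitute $x=u+\pi$ in the lower half of each period and use that $v\mapsto v^{-\gamma}$ is strictly decreasing:
\[
\int_{2k\pi}^{2(k+1)\pi} x^{-\gamma}\sin x\,dx=\int_{2k\pi}^{(2k+1)\pi}\bigl(u^{-\gamma}-(u+\pi)^{-\gamma}\bigr)\sin u\,du>0,\qquad k\ge 0,
\]
since $\sin u\ge 0$ on $[2k\pi,(2k+1)\pi]$. Hence $0=G(0)<G(2\pi)<G(4\pi)<\cdots$, so $G(2k\pi)>0$ for all $k\ge 1$; and because $G$ increases on $[2j\pi,(2j+1)\pi]$ and decreases on $[(2j+1)\pi,(2j+2)\pi]$, its minimum over each interval $[2j\pi,2(j+1)\pi]$ is attained at an endpoint and is therefore positive. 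Thus $G(a)>0$ for every $a>0$. Finally $\int_0^\infty x^{-\gamma}\sin x\,dx=\lim_{k\to\infty}G(2k\pi)$ is a convergent sum of the strictly positive terms displayed above (the $k$-th term is dominated by $\pi\bigl((2k\pi)^{-\gamma}-(2(k+1)\pi)^{-\gamma}\bigr)$, which telescopes), hence positive.

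For the cosine integral the same analysis, now driven by $F'(a)=a^{-\gamma}\cos a$, shows that $F$ increases on $(0,\pi/2)$, decreases on $(\pi/2,3\pi/2)$, and repeats with period $2\pi$, so its local minima lie at $a=2k\pi+3\pi/2$, $k\ge 0$. The same folding identity on the interval $[2k\pi+3\pi/2,\,2(k+1)\pi+3\pi/2]$ produces $\int\bigl(u^{-\gamma}-(u+\pi)^{-\gamma}\bigr)\cos u\,du$ over a range on which $\cos u\ge 0$, so $F(2k\pi+3\pi/2)$ is increasing in $k$; arguing endpoint-by-endpoint as before reduces everything to $F(3\pi/2)>0$. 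To prove this I would estimate the two pieces of $F(3\pi/2)=\int_0^{\pi/2}+\int_{\pi/2}^{3\pi/2}$ separately. On $[0,\pi/2]$ the concavity of $\cos$ gives $\cos x\ge 1-\tfrac2\pi x\ge 0$, hence
\[
\int_0^{\pi/2}x^{-\gamma}\cos x\,dx\ \ge\ \int_0^{\pi/2}x^{-\gamma}\Bigl(1-\tfrac2\pi x\Bigr)\,dx\ =\ \frac{(\pi/2)^{1-\gamma}}{(1-\gamma)(2-\gamma)}.
\]
On $[\pi/2,3\pi/2]$ the substitution $x=\pi/2+t$ turns the integral into $-\int_0^\pi(\pi/2+t)^{-\gamma}\sin t\,dt$, which is $\ge -(\pi/2)^{-\gamma}\int_0^\pi\sin t\,dt=-2(\pi/2)^{-\gamma}$. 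Adding the two bounds,
\[
F(3\pi/2)\ \ge\ (\pi/2)^{-\gamma}\left(\frac{\pi/2}{(1-\gamma)(2-\gamma)}-2\right),
\]
and since $(1-\gamma)(2-\gamma)\le\tfrac34$ on $[\tfrac12,1)$ the parenthesis is at least $\tfrac{2\pi}{3}-2>0$. This is the heart of the matter, and it is exactly where $\gamma\ge\tfrac12$ is needed: for smaller $\gamma$ the conclusion itself fails (e.g. $F(3\pi/2)=\sin(3\pi/2)=-1$ at $\gamma=0$). Combining with the reduction gives $F(a)>0$ for all $a>0$, which is the remaining assertion of the lemma.
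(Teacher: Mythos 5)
Your proof is correct, and its skeleton coincides with the paper's: the sine integrals are handled by the same period-$\pi$ folding (pairing each positive half-wave with the next negative one and using the monotone decay of $x^{-\gamma}$), and the cosine statement is reduced to the single finite inequality $\int_0^{3\pi/2}x^{-\gamma}\cos x\,dx>0$ --- you do this by showing the local minima at $3\pi/2+2k\pi$ increase, the paper by the equivalent remark that $\int_{3\pi/2}^{a}x^{-\gamma}\cos x\,dx>0$ for all $a>3\pi/2$. Where you genuinely diverge is the endgame. The paper integrates by parts, $\int_0^{3\pi/2}x^{-\gamma}\cos x\,dx=-(2/(3\pi))^{\gamma}+\gamma\int_0^{3\pi/2}x^{-\gamma-1}\sin x\,dx$, drops the (nonnegative) part over $[\pi/2,3\pi/2]$ and uses $\sin x\geq 2x/\pi$, ending with the inequality $\frac{2\gamma}{\pi(1-\gamma)}(\pi/2)^{1-\gamma}>(2/(3\pi))^{\gamma}$ for $\gamma\in[1/2,1)$. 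You instead split at $\pi/2$, bound $\cos x\geq 1-2x/\pi$ on the first piece and bound the second piece crudely by $-2(\pi/2)^{-\gamma}$, ending with $\frac{\pi/2}{(1-\gamma)(2-\gamma)}>2$; your verification that $(1-\gamma)(2-\gamma)\leq 3/4$ on $[1/2,1)$, so the bracket is at least $2\pi/3-2>0$, is correct, and this is indeed exactly where $\gamma\geq 1/2$ enters in both versions. Your route avoids integration by parts at the price of a lossier estimate on $[\pi/2,3\pi/2]$, which still suffices on the whole range. One cosmetic remark: for the sine integral the minimum of $G$ over the first interval $[0,2\pi]$ is attained at $0$, where $G=0$, so the blanket phrase ``attained at an endpoint and therefore positive'' needs the separate observation you in fact already made (positivity of the integrand on $(0,\pi]$ and monotone decrease to $G(2\pi)>0$ on $[\pi,2\pi]$); as written this is harmless.
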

\begin{proof}
The inequalities with $\sin$ are elementary as $x^{-\gamma}$ decays and $\sin x$ satisfies
\[\sin (\pi+x)=-\sin x; \quad \sin x>0, \,x\in (0,\pi)\]
 For the first inequality, we notice that
\[
\int_{3\pi/2}^a \frac{\cos x}{x^\gamma}dx>0
\]
for any $a>3\pi/2$ and we only need to show that
\[
\int_0^{3\pi/2} \frac{\cos x}{x^\gamma}dx>0
\]
Integrating by parts we have
\[
\int_0^{3\pi/2} \frac{\cos
x}{x^\gamma}dx=-\left(\frac{2}{3\pi}\right)^\gamma+\gamma\int_0^{3\pi/2}\frac{\sin
x}{x^{\gamma+1}}dx>-\left(\frac{2}{3\pi}\right)^\gamma+\frac{2\gamma}{\pi}
\int_0^{\pi/2}x^{-\gamma}dx
\]
where in the last inequality we dropped the integral over
$[\pi/2,3\pi/2]$ and used decay of $x^{-1}\sin x$ on $[0,\pi/2]$.
Calculating the integral, we get
\[
\frac{2\gamma}{\pi(1-\gamma)}\left(\frac{\pi}{2}\right)^{1-\gamma}-\left(\frac{2}{3\pi}\right)^\gamma>0
\]
for $\gamma\in [1/2,1)$.
\end{proof}

Let us first study the properties of $B_n$. As $B_n$ is the Taylor expansion of $(1-z)^{-\beta}$ and $\beta\in (0,1/2)$, we have the uniform convergence $B_n(z)\to (1-z)^{-\beta}$ in $\{|z|\leq 1\}\cap \{|1-z|>1-\upsilon\}$ for any fixed $\upsilon>0$ as long as $n\to \infty$. We now take $z=e^{i\theta}$ with $\theta\in (-\upsilon,\upsilon)$ where $\upsilon$ is small.\smallskip

We will need to use the following approximations by the integrals. Let $\gamma\in (0,1)$.
\begin{equation}\label{repa1}
\int_1^n \frac{\cos(x\theta)}{x^\gamma}dx=\sum_{j=1}^{n-1}
\int_{j}^{j+1} \frac{\cos(x\theta)}{x^\gamma}dx=\sum_{j=1}^{n-1}
\frac{1}{j^\gamma}\int_{j}^{j+1} {\cos(x\theta)}dx+\sum_{j=1}^{n-1}
\int_j^{j+1}
\cos(x\theta)\left(\frac{1}{x^\gamma}-\frac{1}{j^\gamma}\right)dx
\end{equation}
Since
\begin{equation}\label{motr}
\max_{x\in [j,j+1]} |x^{-\gamma}-j^{-\gamma}|\lesssim j^{-\gamma-1}
\end{equation}
the second term is $O(1)$ uniformly in $\theta$ and $n$ and that
gives
\[
\int_1^n \frac{\cos(x\theta)}{x^\gamma}dx=O(1)+\sum_{j=1}^{n-1}
\frac{1}{j^\gamma}
\frac{\sin(\theta/2)}{\theta/2}\cos(j\theta+\theta/2)=
\]
\[
O(1)+\sum_{j=1}^{n-1} \frac{1}{j^\gamma}
\frac{\sin(\theta/2)}{\theta/2}\Bigl(\cos(j\theta)\cos(\theta/2)-\sin(j\theta)\sin(\theta/2)\Bigr)
\]
Similarly
\begin{equation}\label{repa2}
\int_1^n \frac{\sin(x\theta)}{x^\gamma}dx=\sum_{j=1}^{n-1}
\int_{j}^{j+1} \frac{\sin(x\theta)}{x^\gamma}dx=\sum_{j=1}^{n-1}
\frac{1}{j^\gamma}\int_{j}^{j+1} {\sin(x\theta)}dx+\sum_{j=1}^{n-1}
\int_j^{j+1}
\sin(x\theta)\left(\frac{1}{x^\gamma}-\frac{1}{j^\gamma}\right)dx
\end{equation}
By (\ref{motr}), the second term is $\overline{o}(1)$ as $\theta\to
0$ uniformly in $n$. Therefore, we have
\[
\int_1^n
\frac{\sin(x\theta)}{x^\gamma}dx=\overline{o}(1)+\sum_{j=1}^{n-1}
\frac{1}{j^\gamma}\int_{j}^{j+1} {\sin(x\theta)}dx=
\]
\[
o(1)+\sum_{j=1}^{n-1} \frac{1}{j^\gamma}
\frac{\sin(\theta/2)}{\theta/2}\sin(j\theta+\theta/2)=
\]
\[
\overline{o}(1)+\sum_{j=1}^{n-1} \frac{1}{j^\gamma}
\frac{\sin(\theta/2)}{\theta/2}\Bigl(\sin(j\theta)\cos(\theta/2)+\cos(j\theta)\sin(\theta/2)\Bigr)
\]
Above $O(1)$ and $\overline{o}(1)$ are in $\theta\to 0$ uniformly in
$n$. Now, representations (\ref{repa1}) and (\ref{repa2}) yield the
formulas for
\[
\sum_{j=1}^{n-1}\frac{\cos(j\theta)}{j^\gamma}, \quad \sum_{j=1}^{n-1}\frac{\sin(j\theta)}{j^\gamma}
\]
i.e.
\begin{equation}\label{sin-a}
\sum_{j=1}^{n-1} \frac{\cos(j\theta)}{j^\gamma}=O(1)+C_{11}(\theta)\int_1^n \frac{\cos(x\theta)}{x^\gamma}dx+C_{12}(\theta)\int_1^n \frac{\sin(x\theta)}{x^\gamma}dx
\end{equation}
and
\begin{equation}\label{cos-a}
\sum_{j=1}^{n-1}
\frac{\sin(j\theta)}{j^\gamma}=\overline{o}(1)+C_{21}(\theta)\int_1^n
\frac{\sin(x\theta)}{x^\gamma}dx+C_{22}(\theta)\int_1^n
\frac{\cos(x\theta)}{x^\gamma}dx
\end{equation}
where $C_{11}\to 1, C_{12}\to 0, C_{21}\to 1, C_{22}\to 0$ as
$\theta\to 0$ uniformly in $n$.\smallskip

Now we are ready for the next lemma.
\begin{lemma}\label{poly2}
Let $\beta\in (0,1/2)$ and $\upsilon$ is sufficiently small fixed
positive number, then
\[
\Re B_n(e^{i\theta})\sim (n^{-1}+|\theta|)^{-\beta}, \quad \theta\in (-\upsilon,\upsilon)
\]
and
\[
 \quad \frac{\Im B_n(e^{i\theta})}{{\rm sign}(\theta)}\sim |\theta|^{-\beta}, \quad 0.01n^{-1}<|\theta|<\upsilon
\]
\[
 \quad \frac{\Im B_n(e^{i\theta})}{\theta}\sim n^{1+\beta}, \quad |\theta|<0.01n^{-1}
\]

\end{lemma}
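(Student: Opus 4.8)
Write $\gamma=1-\beta$, so $\gamma\in(1/2,1)$ and $d_j=C_3(\beta)j^{-\gamma}+e_j$ with $|e_j|\lesssim j^{-\gamma-1}$, and use $\Re B_n(e^{i\theta})=1+\sum_{j=1}^n d_j\cos(j\theta)$, $\Im B_n(e^{i\theta})=\sum_{j=1}^n d_j\sin(j\theta)$. Since the $d_j$ are real and positive, $\Re B_n$ is even and $\Im B_n$ is odd in $\theta$, so it suffices to treat $0<\theta<\upsilon$; I would split this range into the inner regime $0<\theta<0.01n^{-1}$ and the outer regime $0.01n^{-1}<\theta<\upsilon$. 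The $\{e_j\}$ are harmless throughout, because $\sum|e_j|<\infty$ (and in the inner regime one gains an extra factor $\theta$ from $|\sin(j\theta)|\le j\theta$). In the inner regime $j\theta\le n\theta<0.01$ for all $j\le n$, so $\cos(j\theta)\in(0.99,1]$, and hence $\Re B_n(e^{i\theta})\sim\sum_{j=1}^n d_j\sim n^\beta\sim(n^{-1}+\theta)^{-\beta}$ (using $d_j\sim C_3 j^{-\gamma}$ and $n^{-1}+\theta\sim n^{-1}$), while the expansion $\sin(j\theta)=j\theta+O((j\theta)^3)$ gives $\Im B_n(e^{i\theta})=\theta\sum_{j=1}^n jd_j+O\big(\theta^3\sum_{j=1}^n j^3 d_j\big)$; since $\sum_{j=1}^n jd_j\sim n^{1+\beta}>0$ and $\theta^3\sum_{j=1}^n j^3 d_j=O\big((n\theta)^2\,\theta n^{1+\beta}\big)$ with $(n\theta)^2<10^{-4}$, we obtain $\Im B_n(e^{i\theta})/\theta\sim n^{1+\beta}$.

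\textbf{The outer regime.} Here I would first apply the sum-to-integral formulas (\ref{sin-a}) and (\ref{cos-a}) (for the present value of $\gamma$), which express $\Re B_n$ and $\Im B_n$, up to the bounded additive errors appearing there and the multiplicative factors $C_{11},C_{21}\to1$, $C_{12},C_{22}\to0$, through the two integrals $I_c=\int_1^n x^{-\gamma}\cos(x\theta)\,dx$ and $I_s=\int_1^n x^{-\gamma}\sin(x\theta)\,dx$. The substitution $u=x\theta$ gives $I_c=\theta^{-\beta}(g(n\theta)-g(\theta))$ and $I_s=\theta^{-\beta}(h(n\theta)-h(\theta))$, where $g(a)=\int_0^a u^{-\gamma}\cos u\,du$ and $h(a)=\int_0^a u^{-\gamma}\sin u\,du$. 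The key input is Lemma \ref{trifle}: because $\gamma\in[1/2,1)$ it gives $g>0$ and $h>0$ on $(0,\infty)$ together with $\int_0^\infty u^{-\gamma}\sin u\,du>0$; combined with the convergence of both integrals at $+\infty$ — so $g,h$ are bounded and bounded below by a positive constant on $[0.01,\infty)$ — and with $0\le g(\theta),h(\theta)\to0$ as $\theta\to0$, this forces $g(n\theta)-g(\theta)\sim1$ and $h(n\theta)-h(\theta)\sim1$ uniformly on the outer regime once $\upsilon$ is small, hence $I_c\sim I_s\sim\theta^{-\beta}$. Feeding this back into (\ref{sin-a}), (\ref{cos-a}), and noting that the additive $O(1)$ errors are $\ll\theta^{-\beta}$ since $\theta<\upsilon$ is small while $C_{11},C_{21}$ are near $1$ and $C_{12},C_{22}$ are small, we get $\Re B_n(e^{i\theta})\sim\theta^{-\beta}\sim(n^{-1}+\theta)^{-\beta}$ (using $\theta>0.01n^{-1}$) and $\Im B_n(e^{i\theta})\sim\theta^{-\beta}$ with positive sign, i.e. $\Im B_n(e^{i\theta})/{\rm sign}(\theta)\sim|\theta|^{-\beta}$; the case $\theta<0$ follows from oddness.

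\textbf{Main obstacle.} The one genuinely delicate point is the lower bound for the oscillatory integrals $I_c$ and $I_s$ in the outer regime: a priori these could be small or exhibit cancellation. What saves the argument is exactly the sign information supplied by Lemma \ref{trifle}, and this is precisely where the hypothesis $\beta<1/2$ — equivalently $\gamma\ge1/2$ — enters: it ensures $g(a)>0$ for every $a>0$, not merely for large $a$.
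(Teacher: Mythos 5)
Your proposal is correct and follows essentially the same route as the paper: the inner regime $|\theta|<0.01n^{-1}$ is handled by the trivial estimates $\cos(j\theta)\sim1$, $\sin(j\theta)\sim j\theta$, and the outer regime by the sum-to-integral formulas (\ref{sin-a})--(\ref{cos-a}) together with the substitution $t=x\theta$ and the positivity statements of Lemma \ref{trifle}, which is exactly where $\beta<1/2$ (i.e. $\gamma\geq1/2$) enters for the cosine integral. You merely spell out in more detail than the paper the uniform lower bound for $\int_\theta^{n\theta}t^{-\gamma}\cos t\,dt$ and $\int_\theta^{n\theta}t^{-\gamma}\sin t\,dt$ when $n\theta\geq0.01$ and $\upsilon$ is small, which the paper leaves implicit.
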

\begin{proof}
 The case $|\theta|<0.01n^{-1}$
follows from $\cos(j\theta)\sim 1$ and $\sin(j\theta)/(j\theta)\sim 1$. For the other $\theta$, we first notice that it is sufficient to consider $\theta\in (0.01n^{-1},\upsilon)$ and that
\[
B_n(e^{i\theta})=1+C_3(\beta)\left(\sum_{j=1}^{n} j^{-1+\beta}e^{i\theta j}+O(1)\right)
\]
Let $\gamma=1-\beta\in (1/2,1)$ and use the formulas (\ref{sin-a}) and (\ref{cos-a}).
Notice that
\[
\int_1^n
\frac{\cos(x\theta)}{x^\gamma}dx=\theta^{\gamma-1}\int_\theta^{n\theta}\frac{\cos
t}{t^\gamma}dt\sim \theta^{\gamma-1} \quad ({\rm any}\, \gamma\in (1/2,1))
\]
as long as $\theta\in (0.01n^{-1},\upsilon)$ as follows from the lemma \ref{trifle}.
Similarly
\[
\int_1^n
\frac{\sin(x\theta)}{x^\gamma}dx=\theta^{\gamma-1}\int_\theta^{n\theta}\frac{\sin
t}{t^\gamma}dt\sim \theta^{\gamma-1}\quad ({\rm any}\, \gamma\in (0,1))
\]
That finishes the proof.
\end{proof}
\begin{lemma}\label{derider}
For any $\beta\in (0,1)$ we have
\[
|B_n'(e^{i\theta})|\lesssim \left\{
\begin{array}{cc}
|\theta|^{-1}n^\beta, & |\theta|>n^{-1}\\
n^{1+\beta}, & |\theta|<n^{-1}
\end{array}
\right.
\]
\[
|B''_n(e^{i\theta})|\lesssim \left\{
\begin{array}{cc}
|\theta|^{-1}n^{\beta+1}, & |\theta|>n^{-1}\\
n^{2+\beta}, & |\theta|<n^{-1}
\end{array}
\right.
\]
where the derivative is taken in $\theta\in (-\upsilon,\upsilon)$.
\end{lemma}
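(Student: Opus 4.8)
The plan is to differentiate the explicit Taylor polynomial $B_n(e^{i\theta})=1+\sum_{j=1}^n d_j e^{ij\theta}$ term by term and reduce everything to a summation-by-parts estimate for a trigonometric sum with monotone coefficients. Since the derivative is taken in $\theta$, we have $\partial_\theta B_n(e^{i\theta})=i\sum_{j=1}^n (jd_j)\,e^{ij\theta}$ and $\partial_\theta^2 B_n(e^{i\theta})=-\sum_{j=1}^n (j^2 d_j)\,e^{ij\theta}$, so both bounds follow once we control $\bigl|\sum_{j=1}^n a_j e^{ij\theta}\bigr|$ for the two weight sequences $a_j=jd_j$ and $a_j=j^2 d_j$.

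The key structural observation is that both of these sequences are positive and strictly increasing in $j$. Indeed, from $d_{j+1}/d_j=(\beta+j)/(j+1)$ one gets $(j+1)d_{j+1}/(jd_j)=(\beta+j)/j>1$ and $(j+1)^2 d_{j+1}/(j^2 d_j)=(j+1)(\beta+j)/j^2>1$. Moreover, the asymptotics $d_j=C_3(\beta)j^{\beta-1}+O(j^{\beta-2})$ from the definition of $B_n$ give $jd_j\sim j^{\beta}$ and $j^2 d_j\sim j^{1+\beta}$; hence $\max_{1\le j\le n} jd_j\lesssim n^{\beta}$, $\sum_{j=1}^n jd_j\lesssim n^{1+\beta}$, and likewise $\max_{1\le j\le n} j^2 d_j\lesssim n^{1+\beta}$, $\sum_{j=1}^n j^2 d_j\lesssim n^{2+\beta}$.

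Now I split into the two regimes. For $|\theta|<n^{-1}$ I use the trivial bound $\bigl|\sum_{j=1}^n a_j e^{ij\theta}\bigr|\le\sum_{j=1}^n a_j$, which gives $n^{1+\beta}$ for the first derivative and $n^{2+\beta}$ for the second. For $n^{-1}\le|\theta|\le\upsilon$ I apply Abel summation: writing $S_k=\sum_{j=1}^k e^{ij\theta}$, the Dirichlet-kernel bound $|S_k|\le 1/|\sin(\theta/2)|\lesssim|\theta|^{-1}$ holds uniformly in $k$ on $(-\upsilon,\upsilon)\setminus\{0\}$, and
\[
\sum_{j=1}^n a_j e^{ij\theta}=a_n S_n-\sum_{j=1}^{n-1}(a_{j+1}-a_j)S_j .
\]
Since $a_j$ is increasing, the telescoping sum of the increments equals $a_n-a_1$, so $\bigl|\sum_{j=1}^n a_j e^{ij\theta}\bigr|\le|\theta|^{-1}(2a_n-a_1)\lesssim|\theta|^{-1}a_n$. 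Inserting $a_n=nd_n\lesssim n^{\beta}$ for the first derivative and $a_n=n^2 d_n\lesssim n^{1+\beta}$ for the second gives exactly the claimed bounds $|\theta|^{-1}n^{\beta}$ and $|\theta|^{-1}n^{1+\beta}$.

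The argument is essentially routine; the only points worth checking carefully are (i) the monotonicity of $jd_j$ and $j^2 d_j$, which is what makes the telescoping in the Abel sum signed and produces the clean constant, and (ii) the uniform-in-$k$ Dirichlet bound $|S_k|\lesssim|\theta|^{-1}$ on $(-\upsilon,\upsilon)$. One could instead peel off the $O(j^{\beta-2})$ correction in $d_j$ and estimate its contribution via the tail $\sum_j j^{\beta-1}\lesssim n^{\beta}$ (which is $\ell^1$-type since $\beta<1$) and absorb it in both regimes; but using the monotonicity of the full coefficients $jd_j$ and $j^2 d_j$ avoids any such bookkeeping.
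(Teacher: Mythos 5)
Your proof is correct and follows essentially the same route as the paper: the trivial termwise bound for $|\theta|<n^{-1}$, and Abel summation against the Dirichlet sums $S_k$ with $|S_k|\lesssim|\theta|^{-1}$ for $|\theta|>n^{-1}$. The only cosmetic difference is that you exploit the monotonicity of the exact weights $jd_j$ and $j^2d_j$ (via $d_{j+1}/d_j=(\beta+j)/(j+1)$) to telescope cleanly, whereas the paper first replaces $jd_j$ by its leading term $j^{\beta}$ and absorbs the $O(j^{\beta-1})$ remainder using $\sum_{j\le n}j^{\beta-1}\lesssim n^{\beta}$.
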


\begin{proof}
For $|\theta|<n^{-1}$, this follows from
\[
B_n'=\sum_{j=1}^n ijd_je^{ij\theta}, \,B_n''=\sum_{j=1}^n (ij)^2d_je^{ij\theta}
\]
by estimating the absolute values of the terms.

 For $|\theta|>1/n$
we can use Abel's lemma. Indeed,
\[
|B'_n|=\left|\sum_{j=1}^n jd_je^{ij\theta}\right|\lesssim \left|\sum_{j=1}^n e^{ij\theta}j^{\beta}\right|+\sum_{j=1}^n j^{\beta-1}
\]
The second term in the sum is bounded by $Cn^{\beta}$. For the first one, we have
\[
\left|\sum_{j=1}^n e^{ij\theta}j^{\beta}\right|\lesssim n^\beta|S_n|+\left|\sum_{j=1}^n S_j j^{\beta-1}\right|, \quad S_j=\sum_{j=1}^n e^{ij\theta}, \quad |S_j|\lesssim |\theta|^{-1}
\]
and that yields the bound for $B_n'$. The second derivative can be estimated similarly.
\end{proof}

Next, we will study the polynomial $A_n$. For the Taylor expansion of $(1-z)^\beta$, we have
\[
(1-z)^\beta=1+\sum_{j=1}^\infty \frac{(-1)^j\beta(\beta-1)\ldots(\beta-(j-1))}{j!}z^j
\]
If
\[
c_j=\frac{\beta(1-\beta)\ldots(j-1-\beta)}{j!}=C_4(\beta)j^{-\beta-1}+O(j^{-\beta-2})>0
\]
then
\[
\sum_{j=1}^\infty c_j=1
\]
Therefore, the formula for $A_n$ can be rewritten as
\[
A_n(z)=Mn^{-\beta}+\sum_{j=1}^n c_j(1-z^j)
\]
We again notice that $A_n(z)$ converges uniformly to $(1-z)^\beta$ in $\{|z|\leq 1\}\cap \{|1-z|>1-\upsilon\}$ for any fixed $\upsilon>0$.

\begin{lemma}\label{poly1}
Let $\beta\in (0,1)$. We have
\begin{equation}\label{odin}
\Re A_n(e^{i\theta})\sim (n^{-1}+|\theta|)^{\beta}, \quad \theta\in (-\upsilon,\upsilon)
\end{equation}
and
\begin{equation}\label{dva}
-\frac{\Im A_n(e^{i\theta})}{{\rm sign}( \theta)}\sim \left\{
\begin{array}{cc}
|\theta| n^{1-\beta}, & |\theta|<0.01n^{-1}\\ |\theta|^{\beta}\,, &0.01n^{-1}< |\theta|<\upsilon
\end{array}\right.
\end{equation}
\end{lemma}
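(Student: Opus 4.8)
The plan is to reduce everything to the asymptotics of the trigonometric sums $\sum_{j=1}^n c_j\cos(j\theta)$ and $\sum_{j=1}^n c_j\sin(j\theta)$, exactly as in the proof of Lemma \ref{poly2}, but now with coefficients $c_j = C_4(\beta)j^{-\beta-1}+O(j^{-\beta-2})$, i.e. with the exponent $\gamma = \beta+1 \in (1,2)$ instead of $\gamma\in(1/2,1)$. Writing $A_n(e^{i\theta}) = Mn^{-\beta} + \sum_{j=1}^n c_j - \sum_{j=1}^n c_j e^{ij\theta} = Mn^{-\beta} + \bigl(1-\sum_{j>n}c_j\bigr) - \sum_{j=1}^n c_j e^{ij\theta}$, and using $\sum_{j>n} c_j \sim n^{-\beta}$, one sees that $1-\Re A_n = \sum_{j=1}^n c_j\cos(j\theta) + O(n^{-\beta})$ and $-\Im A_n = \sum_{j=1}^n c_j\sin(j\theta)$. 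So the whole lemma comes down to estimating these two sums.

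The regime $|\theta|<0.01n^{-1}$ is the easy one: there $\cos(j\theta) = 1+O(j^2\theta^2)$ and $\sin(j\theta) = j\theta(1+O(j^2\theta^2))$ for all $j\le n$, so $\sum c_j\cos(j\theta) = \sum_{j\le n} c_j\,(1+O(n^{-2}(j\theta)^2))$. Hence $1-\Re A_n \sim n^{-\beta}$, giving $\Re A_n \sim n^{-\beta}\sim (n^{-1}+|\theta|)^\beta$ after adding the correction $Mn^{-\beta}$ (here the positive constant $M$ guarantees $\Re A_n$ stays comparable to $n^{-\beta}$ rather than possibly being much smaller), and $-\Im A_n = \theta\sum_{j\le n} jc_j\,(1+o(1)) \sim \theta\sum_{j\le n} j^{-\beta} \sim \theta n^{1-\beta}$, which is the first line of (\ref{dva}). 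For the main regime $0.01n^{-1}<|\theta|<\upsilon$ I would mimic (\ref{repa1})–(\ref{cos-a}) to replace the sums by the integrals $\int_1^n x^{-\gamma}\cos(x\theta)\,dx$ and $\int_1^n x^{-\gamma}\sin(x\theta)\,dx$ with $\gamma=\beta+1$, up to error terms controlled using $\max_{x\in[j,j+1]}|x^{-\gamma}-j^{-\gamma}|\lesssim j^{-\gamma-1}$. Substituting $t=x\theta$, $\int_1^n x^{-\gamma}\cos(x\theta)\,dx = \theta^{\gamma-1}\int_\theta^{n\theta} t^{-\gamma}\cos t\,dt$; since $\gamma-1=\beta$ and the integral $\int_\theta^{n\theta} t^{-\gamma}\cos t\,dt$ behaves like $\int_0^{\,\cdot} t^{-\gamma}\cos t\,dt$, which near $t=0$ diverges like $t^{1-\gamma}=t^{-\beta}$ and is dominated by the lower limit, one gets $\int_1^n x^{-\gamma}\cos(x\theta)\,dx \sim \theta^{\beta}\cdot\theta^{-\beta} = 1$; more carefully, $1-\sum_{j>n}c_j - \sum_{j\le n}c_j\cos(j\theta) = \sum_{j\le n}c_j(1-\cos(j\theta))$ behaves like $|\theta|^\beta$, which yields $\Re A_n \sim |\theta|^\beta$. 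Likewise $\int_1^n x^{-\gamma}\sin(x\theta)\,dx = \theta^{\gamma-1}\int_\theta^{n\theta} t^{-\gamma}\sin t\,dt$, and since $t^{-\gamma}\sin t \sim t^{-\beta}$ is integrable near $0$ and the tail integral $\int^\infty t^{-\gamma}\sin t\,dt$ converges, $\int_\theta^{n\theta}t^{-\gamma}\sin t\,dt \to \int_0^\infty t^{-\gamma}\sin t\,dt$, which is a positive constant by Lemma \ref{trifle} (the statement there is given for $\gamma\in(0,1)$ but the same alternating-series argument works for $\gamma\in(1,2)$, or one integrates by parts once to reduce to that case). Hence $-\Im A_n \sim |\theta|^{\gamma-1} = |\theta|^\beta$, the second line of (\ref{dva}), and the sign is $\mathrm{sign}(\theta)$ since the limiting constant is positive.

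The main obstacle is bookkeeping the error terms uniformly in $n$ when passing from the sums to the integrals in the regime $\gamma>1$: unlike in Lemma \ref{poly2}, the coefficient sequence $c_j$ is now summable, so the "leading" contribution to $1-\Re A_n$ is a genuine cancellation $\sum c_j(1-\cos(j\theta))$ of size $|\theta|^\beta$ sitting on top of a convergent sum of size $O(1)$, and I must make sure the $O(1)$ pieces cancel exactly (they do, since $\sum_{j=1}^\infty c_j = 1$) rather than being absorbed into a weaker error. Concretely, the cleanest route is probably to write $1-\cos(j\theta) = 2\sin^2(j\theta/2)$, split the sum at $j\sim|\theta|^{-1}$ (small $j$: $\sin^2(j\theta/2)\sim (j\theta)^2$, contributing $\theta^2\sum_{j\lesssim|\theta|^{-1}} j^{1-\beta}\sim |\theta|^\beta$; large $j$: $\sin^2$ averages to $1/2$, contributing $\sum_{j\gtrsim|\theta|^{-1}}c_j\sim|\theta|^\beta$), and similarly decompose $\sum c_j\sin(j\theta)$; this avoids the delicate integral comparison and makes both upper and lower bounds transparent. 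The lower bound on $-\Im A_n$ in the large-$j$ block still needs Lemma \ref{trifle}-type positivity, which I would handle by the same stationary-grouping-of-consecutive-humps argument used there.
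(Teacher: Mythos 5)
Your proposal is correct, and it is partly the paper's route and partly a genuinely different one. For $|\theta|<0.01n^{-1}$, and for the imaginary part in the regime $0.01n^{-1}<|\theta|<\upsilon$, you do essentially what the paper does: termwise Taylor bounds in the first regime, and in the second the replacement of $\sum_j c_j\sin(j\theta)$ by $\theta^{\beta}\int_\theta^{n\theta}t^{-1-\beta}\sin t\,dt$ up to $O(\theta)$ errors, together with the uniform two-sided bound $\delta_2<\int_0^a t^{-1-\beta}\sin t\,dt<C$ for $a>0.01$; the paper also uses this extension of Lemma \ref{trifle} to the exponent $1+\beta$ (stating it without proof), and your alternating-hump justification is the right way to get it, with continuity plus convergence of the tail giving the uniform lower bound. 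Where you genuinely diverge is the real part for $0.01n^{-1}<|\theta|<\upsilon$: the paper sets $T_n(\theta)=\sum_j j^{-1-\beta}(1-\cos(j\theta))$, computes $T_n'(\theta)=\sum_j j^{-\beta}\sin(j\theta)\sim\theta^{\beta-1}$ via (\ref{cos-a}) and the preceding estimates, and then integrates $T_n(\theta)=\int_0^\theta T_n'(\xi)d\xi\sim\theta^\beta$; you instead estimate $\sum_j c_j(1-\cos(j\theta))$ directly by splitting at $j\sim|\theta|^{-1}$, using that every term is nonnegative, so the block $j\lesssim|\theta|^{-1}$ alone gives the lower bound $\gtrsim\theta^2\sum_{j\lesssim|\theta|^{-1}}j^{1-\beta}\sim|\theta|^{\beta}$ (still valid when $|\theta|^{-1}$ slightly exceeds $n$, since $n|\theta|>0.01$), while $\sum_{j\gtrsim|\theta|^{-1}}c_j\lesssim|\theta|^{\beta}$ and $Mn^{-\beta}\lesssim|\theta|^{\beta}$ give the upper bound. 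Your split is more elementary and self-contained, avoiding any sign analysis for the cosine sum; the paper's differentiate-and-integrate step simply reuses machinery ((\ref{sin-a}), (\ref{cos-a})) it has already built for Lemma \ref{poly2}. One caution you correctly flag yourself: for $-\Im A_n$ the positivity of the small-$j$ block plus an $O(|\theta|^{\beta})$ bound on the tail block does not by itself yield the lower bound, since the two blocks could cancel; the positivity of the partial sine integrals (or an equivalent grouping argument with the decreasing coefficients $c_j$) is indispensable there and cannot be skipped.
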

\begin{proof}
We only need to handle positive $\theta$. Again, if $0<\theta<0.01n^{-1}$,
the estimate is simple.
\[
\Re A_n=Mn^{-\beta}+\sum_{j=1}^n c_j(1-\cos(j\theta))
\]
and we have a bound
\[
Mn^{-\beta}\leq \Re A_n\lesssim  Mn^{-\beta}+\sum_{j=1}^n j^{-\beta-1}(j^2\theta^2)\lesssim (M+1)n^{-\beta}
\]
Similarly
\[
\Im A_n=-\sum_{j=1}^n c_j\sin(j\theta)
\]
and
\[
\sum_{j=1}^n c_j\sin(j\theta)\sim \theta\sum_{j=1}^n jc_j\lesssim \theta n^{1-\beta}
\]
For the other $\theta$, we can again approximate
by the integrals. We have
\[
\sum_{j=1}^n c_j\sin(j\theta)=C_4(\beta)\sum_{j=1}^n j^{-\beta-1}\sin(j\theta)+O\left(\sum_{j=1}^n j^{-\beta-2}(j\theta)\right)
\]
The last term is $O(\theta)$. Then, take
\[
\int_1^n  \frac{\sin(x\theta)}{x^{1+\beta}}dx= \sum_{j=1}^{n-1} \int_{j}^{j+1}\frac{\sin(x\theta)}{x^{1+\beta}}dx=
\sum_{j=1}^{n-1}j^{-\beta-1}\int_{j}^{j+1}\sin(x\theta)dx+O(\theta)
\]
For the sum, we have
\[
 \sum_{j=1}^{n-1} j^{-\beta-1} \int_{j}^{j+1}{\sin(x\theta)}dx=\sum_{j=1}^{n-1} j^{-\beta-1} \frac{\sin(\theta/2)}{\theta/2}\sin(j\theta+\theta/2)=
\]
\[
\frac{\sin(\theta/2)\cos(\theta/2)}{\theta/2}\sum_{j=1}^{n-1} j^{-\beta-1}\sin(j\theta)+\frac{\sin^2(\theta/2)}{\theta/2}\sum_{j=1}^{n-1} j^{-\beta-1}\cos(j\theta)
\]
The second term is $O(\theta)$ and
\[
\frac{\sin(\theta/2)\cos(\theta/2)}{\theta/2}\sim 1
\]
for $\theta\in (0,\upsilon)$.
Then,
\[
\int_1^n \frac{\sin(x\theta)}{x^{1+\beta}}dx=\theta^\beta
\int_\theta^{n\theta} \frac{\sin x }{x^{1+\beta}}dx=\int_0^{n\theta} \frac{\sin x }{x^{1+\beta}}dx+O(\theta)
\]
Notice that
\[
C>\int_{0}^{a}\frac{\sin x }{x^{1+\beta}}dx>\delta_2>0
\]
for any $a>0.01$ and so we have
\[
\sum_{j=1}^nc_j\sin(j\theta)\sim \theta^{\beta}+O(\theta)\sim \theta^\beta
\]
This implies (\ref{dva}). For the real part,
\[
\Re A_n(e^{i\theta})=Mn^{-\beta}+T_n(\theta)+O\left(\theta\sum_{j=1}^n j^{-1-\beta}\right), \,T_n=\sum_{j=1}^n
\frac{1-\cos(j\theta)}{j^{1+\beta}}
\]
The last term is $O(\theta)$
and for $T_n$ we have
\[
T_n(0)=0, \quad T_n'(\theta)=\sum_{j=1}^n \frac{\sin(j\theta)}{j^\beta}
\]
For $\theta\in (0,0.01n^{-1})$, we have $T_n'\sim \theta n^{2-\beta}$. For $\theta\in (0.01n^{-1},\upsilon)$, the formula
(\ref{cos-a}) gives
\[
T_n'\sim \theta^{\beta-1}
\]
Integration gives
\[
T_n(\theta)=\int_0^\theta T_n'(\xi)d\xi \sim \theta^\beta, \quad \theta\in (0.01n^{-1},\upsilon)
\]
That finishes the proof.
\end{proof}
It is instructive to compare the results of lemmas \ref{poly2} and \ref{poly1} with (\ref{chisto}).\smallskip

For the derivative of $A_n$ in $\theta$, we have
\[
A_n'=-i\sum_{j=1}^n jc_je^{ij\theta}
\]

\begin{lemma}\label{der-der}
If $\beta\in (0,1)$, then
\[
|A_n'|\lesssim  \left\{
\begin{array}{cc}
|\theta|^{\beta-1}, & |\theta|>0.01n^{-1}\\
n^{1-\beta}, & |\theta|<0.01n^{-1}
\end{array}
\right.
\]
uniformly in $n$.
\end{lemma}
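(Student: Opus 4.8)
The plan is to reduce the claim to an estimate on the exponential sum $\Sigma_n(\theta)=\sum_{j=1}^n j^{-\beta}e^{ij\theta}$ and then control $\Sigma_n$ by splitting it at the natural cutoff $j\sim|\theta|^{-1}$: the ``head'' is handled by the triangle inequality and the ``tail'' by summation by parts.

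\emph{Step 1 (reduction).} Since $c_j=C_4(\beta)j^{-\beta-1}+O(j^{-\beta-2})$, we have $jc_j=C_4(\beta)j^{-\beta}+O(j^{-\beta-1})$, so $A_n'(\theta)=-iC_4(\beta)\Sigma_n(\theta)+E_n(\theta)$ with $|E_n(\theta)|\le C\sum_{j\ge 1}j^{-\beta-1}=O(1)$ uniformly in $n$ and $\theta$. Because $\beta\in(0,1)$ and $|\theta|<\upsilon<1$, this $O(1)$ is $\lesssim|\theta|^{\beta-1}$ when $|\theta|>0.01n^{-1}$ and $\lesssim n^{1-\beta}$ when $|\theta|<0.01n^{-1}$; hence it suffices to prove $|\Sigma_n(\theta)|\lesssim\min(n^{1-\beta},|\theta|^{\beta-1})$, which gives exactly the two cases of the lemma.

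\emph{Step 2 (head and tail).} Set $N=\min(n,\lfloor|\theta|^{-1}\rfloor)\ge 1$ and split $\Sigma_n=\sum_{j=1}^N j^{-\beta}e^{ij\theta}+\sum_{j=N+1}^n j^{-\beta}e^{ij\theta}$, the second sum being empty precisely when $N=n$, i.e. when $|\theta|\le n^{-1}$. The head is bounded by $\sum_{j=1}^N j^{-\beta}\lesssim N^{1-\beta}$, which is $\lesssim|\theta|^{\beta-1}$ (since $N\le|\theta|^{-1}$) and $\lesssim n^{1-\beta}$ (since $N\le n$). For the tail (present only when $|\theta|>n^{-1}$, so $N=\lfloor|\theta|^{-1}\rfloor$ and $N+1>|\theta|^{-1}$), the partial sums satisfy $\bigl|\sum_{j=N+1}^k e^{ij\theta}\bigr|\le|1-e^{i\theta}|^{-1}\lesssim|\theta|^{-1}$ uniformly in $k$, while the weights $j^{-\beta}$ are positive and decreasing; Abel's inequality then gives $\bigl|\sum_{j=N+1}^n j^{-\beta}e^{ij\theta}\bigr|\lesssim(N+1)^{-\beta}|\theta|^{-1}\lesssim|\theta|^{\beta}|\theta|^{-1}=|\theta|^{\beta-1}$. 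Combining the two bounds yields $|\Sigma_n(\theta)|\lesssim\min(n^{1-\beta},|\theta|^{\beta-1})$.

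I do not expect any real obstacle; the argument is short and in the spirit of Lemma~\ref{derider}. The only points needing a little care are verifying that the error term $E_n$ is subordinate to the main bound in both regimes --- which works precisely because $\beta<1$ and $\theta$ lies in a fixed small neighbourhood of $0$ --- and choosing $N$ so that both inequalities $N\le|\theta|^{-1}$ (for the head) and $N+1>|\theta|^{-1}$ (for the tail) are available. Alternatively, one could compare $\Sigma_n$ with the integral $\int_1^n x^{-\beta}e^{ix\theta}\,dx=\theta^{\beta-1}\int_\theta^{n\theta}t^{-\beta}e^{it}\,dt$ and use the uniform boundedness of $\int_a^b t^{-\beta}e^{it}\,dt$ for $\beta\in(0,1)$, exactly as in the proofs of Lemmas~\ref{poly2} and~\ref{poly1}, but the summation-by-parts route above is cleaner.
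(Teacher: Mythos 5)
Your proof is correct, and it takes a genuinely different technical route from the paper's. Both you and the paper start the same way: for $|\theta|<0.01n^{-1}$ you take absolute values and get $\sum_{j\le n} j^{-\beta}\lesssim n^{1-\beta}$, and for $|\theta|>0.01n^{-1}$ you reduce to the exponential sum $\Sigma_n(\theta)=\sum_{j=1}^n j^{-\beta}e^{ij\theta}$ at the cost of an $O(1)$ error. The divergence is in how $\Sigma_n$ is then controlled. The paper invokes the pre-established comparison formulas (\ref{sin-a}) and (\ref{cos-a}), which trade the sums for $\int_1^n \cos(x\theta)x^{-\beta}\,dx$ and $\int_1^n\sin(x\theta)x^{-\beta}\,dx$, and then uses the substitution $t=x\theta$ together with the uniform boundedness of $\int_a^b t^{-\beta}\cos t\,dt$ and $\int_a^b t^{-\beta}\sin t\,dt$ to get $|\theta|^{\beta-1}$. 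You instead split $\Sigma_n$ at $N\sim|\theta|^{-1}$, bound the head by the triangle inequality using $N^{1-\beta}\le\min(n^{1-\beta},|\theta|^{\beta-1})$, and bound the tail by Abel's inequality with the monotone decreasing weights $j^{-\beta}$ and the geometric partial sums bounded by $|\theta|^{-1}$, giving $(N+1)^{-\beta}|\theta|^{-1}\lesssim|\theta|^{\beta-1}$. Your version is more self-contained (it does not rely on the machinery of (\ref{sin-a})--(\ref{cos-a})), while the paper's is shorter in context because those formulas have already been set up and are reused across Lemmas \ref{poly2}, \ref{poly1}, and \ref{der-der}. Both are sound; the one small subtlety you handled correctly is that the error $E_n=O(1)$ is dominated by $|\theta|^{\beta-1}$ only because $\theta$ ranges over a fixed small neighborhood of $0$ and $\beta<1$, so this bound should not be read as holding for all $\theta\in(-\pi,\pi]$.
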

\begin{proof}
For $|\theta|<0.01n^{-1}$, the estimate is obtained by taking the absolute values in the sum. For $|\theta|>0.01n^{-1}$,
\[
|A_n'|\lesssim \left|\sum_{j=1}^n j^{-\beta} e^{ij\theta}\right|+1
\]
The estimates (\ref{sin-a}) and (\ref{cos-a}) along with the trivial estimates on the integrals involved yield the statement of the lemma.
\end{proof}
\section{Appendix B}

In this Appendix, we will control $\phi$, the phase of $Q_m(e^{i\theta})$, for $|\theta|<\upsilon$, where $\upsilon$ is some small positive and fixed number.
\begin{lemma}
For any $\theta\in (-\upsilon,\upsilon)$, we have
\[
|\phi'(\theta)|\lesssim m
\]
\end{lemma}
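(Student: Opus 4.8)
The plan is to reduce the bound to a pointwise estimate for a conjugate function and then extract that estimate from the explicit structure of $T(\theta):=|Q_m(e^{i\theta})|^2$, which by (\ref{fact}) equals $\cal{G}_m(\theta)+|R_{(m,\alpha/2)}(e^{i\theta})|^2$. Using the multiplicative representation (\ref{mult-mult}), write $Q_m=e^{H}$ with $H(z)=\tfrac1{4\pi}\int C(z,e^{it})\log T(t)\,dt$; then $u(\theta)+i\phi(\theta)=\log Q_m(e^{i\theta})$ with $u=\tfrac12\log T$ and the constant fixed by $\phi(0)=0$. Differentiating in $\theta$ and integrating by parts in $t$ gives $u'(\theta)+i\phi'(\theta)=ie^{i\theta}Q_m'(e^{i\theta})/Q_m(e^{i\theta})$ and
\[
\phi'(\theta)=\tfrac12\,\widetilde g(\theta),\qquad g:=(\log T)',\qquad \widetilde g(\theta)=\frac1{2\pi}\,{\rm p.v.}\!\int_{-\pi}^{\pi}g(t)\cot\tfrac{\theta-t}{2}\,dt,
\]
so it suffices to show $|\widetilde g(\theta)|\lesssim m$ for $|\theta|<\upsilon$.

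Next I would pin down the structure of $T$. For the Fej\'er part, (\ref{feya}) together with the identity $(1-\cos m\theta)+\tfrac12(1-\cos(m\theta-\pi))+\tfrac12(1-\cos(m\theta+\pi))\equiv 2$ and the fact that the three denominators in (\ref{sdvig}) differ only by the harmless shift $\pm\pi/m$ yield
\[
\cal{G}_m(\theta)=\frac{2}{m(1-\cos\theta)}\bigl(1+O\bigl((m\theta)^{-2}\bigr)\bigr)\quad\text{for }1/m<|\theta|<\upsilon,\qquad \cal{G}_m(\theta)\sim m\quad\text{for }|\theta|<1/m .
\]
In particular $\cal{G}_m$ is, up to a relatively negligible oscillatory correction, the smooth function $\tfrac1m|1-e^{i\theta}|^{-2}$, so $\log\cal{G}_m(\theta)=-\log m-2\log|1-e^{i\theta}|$ mollified at scale $m^{-1}$ near $0$, and $|(\log\cal{G}_m)'|\lesssim\min(m,|\theta|^{-1})$. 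For the other part, Lemma~\ref{poly2} gives $\Re R_{(m,\alpha/2)}(e^{i\theta})\sim(m^{-1}+|\theta|)^{-\alpha/2}>0$ on $(-\upsilon,\upsilon)$, while on $\{|\theta|\ge\upsilon\}$ the polynomial $R_{(m,\alpha/2)}$ (a Taylor partial sum of $(1-z)^{-\alpha/2}$) converges uniformly to $(1-e^{i\theta})^{-\alpha/2}$, which has positive real part; hence for $m$ large $R_{(m,\alpha/2)}$ has positive real part on $\mathbb T$, so it is zero--free in $\mathbb D$, i.e.\ outer, and $\widetilde{\tfrac12\log|R_{(m,\alpha/2)}|^2}=\arg R_{(m,\alpha/2)}$ up to an additive constant. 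From the $\Re$, $\Im$ asymptotics of Lemma~\ref{poly2} and the derivative bounds in Lemmas~\ref{derider} and \ref{der-der} one reads off $\bigl|(\arg R_{(m,\alpha/2)})'(\theta)\bigr|\lesssim m$ on $(-\upsilon,\upsilon)$.

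To finish, split $\log T=\log|R_{(m,\alpha/2)}|^2+w$ with $w:=\log\bigl(1+\cal{G}_m/|R_{(m,\alpha/2)}|^2\bigr)$. By the previous step the first summand contributes $2\arg R_{(m,\alpha/2)}$ to $\phi'$ after conjugation, which is $O(m)$. The correction $w\ge0$ is concentrated in $\{|\theta|\lesssim m^{-1/(2-\alpha)}\}$ (outside it $\cal{G}_m/|R_{(m,\alpha/2)}|^2\lesssim m^{-1}|\theta|^{\alpha-2}$, so $w$ decays like a power), satisfies $|w'|\lesssim m$, $|w''|\lesssim m^2$ near $0$ with power decay away from it, and --- because $w$ is essentially a function of $\cal{G}_m/|R_{(m,\alpha/2)}|^2\sim m^{-1}|1-e^{it}|^{\alpha-2}$ --- enjoys the ``logarithmic'' continuity $|w(t)-w(\theta_0)|\lesssim|\log(|t|/|\theta_0|)|$ for $t,\theta_0\gtrsim 1/m$. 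One then estimates $\widetilde{w'}(\theta_0)=\tfrac1{2\pi}{\rm p.v.}\!\int\bigl(w'(t)-w'(\theta_0)\bigr)\cot\tfrac{\theta_0-t}{2}\,dt$ by splitting at scale $m^{-1}$: on $|t-\theta_0|<m^{-1}$ the integrand is $\lesssim\sup|w''|\lesssim m^2$, giving $\lesssim m$; on the complement one uses $|\cot\tfrac{\theta_0-t}{2}|\lesssim|\theta_0-t|^{-1}$ and, after one integration by parts, the power decay together with the logarithmic continuity to again get $\lesssim m$. The same two--scale scheme applied to $(\log\cal{G}_m)'$ works because its conjugate is the derivative of a mollification at scale $m^{-1}$ of the sawtooth $\pi-\theta=\widetilde{-2\log|2\sin(\theta/2)|}$, hence $O(m)$; combining the pieces gives $|\phi'(\theta)|\lesssim m$.

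The step I expect to be the main obstacle is this last one. A direct modulus estimate of the singular integral over the annulus $m^{-1}<|t-\theta_0|\lesssim m^{-1/(2-\alpha)}$, where $|(\log T)'|$ is of order $m$ and the kernel $|\theta_0-t|^{-1}$ contributes ``$\int dt/|t|\sim\log m$'', loses a factor $\log m$ --- which would be fatal for the choice of $\delta_1$ in the proof of Theorem~\ref{T3}. Removing this loss is exactly where one must use that there $\log T$ varies only logarithmically in $|\theta|$ (so the correctly regularized integral, or an integration by parts, recovers the logarithm) and that the Fej\'er contribution $\cal{G}_m$ is non--oscillatory to leading order; making these heuristics quantitative out of (\ref{sdvig}) and the Appendix~A lemmas is the technical core.
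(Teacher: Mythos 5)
Your reduction of the lemma to a pointwise bound on the conjugate function of $(\log|Q_m(e^{i\theta})|^2)'$ is exactly the paper's first step, but after that the routes genuinely diverge. The paper never splits $|Q_m|^2$: it rescales, works with $M_m(t)=\exp D_m(t/m)$, proves from (\ref{uh-uh}) and Lemmas \ref{poly2}, \ref{derider} the pointwise bounds $|M_m'/M_m|\lesssim (1+|\xi|)^{-1}+(1+|\xi|)^{\alpha/2-1}$ and $\|M_m''/M_m\|_\infty\lesssim 1$, and then treats the near zone $|t|<1$ by the mean value theorem and the far zone $1<|t|<\pi m$ by Cauchy--Schwarz, the square integrability of $(1+|\xi|)^{\alpha/2-1}$ being what rules out any logarithm. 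You instead factor out $|R_{(m,\alpha/2)}|^2$ first: since $\Re R_{(m,\alpha/2)}>0$ on $\mathbb{T}$ the polynomial is outer, so that piece of $\phi'$ is exactly $(\arg R_{(m,\alpha/2)})'$, which is $O(m)$ by $|R'|/|R|$ together with Lemmas \ref{poly2} and \ref{derider} (these, not \ref{der-der}, which concerns $A_n$), and only $w=\log\bigl(1+\cal{G}_m/|R_{(m,\alpha/2)}|^2\bigr)$ enters the singular integral. That is a clean and genuinely different decomposition; it buys an exact treatment of the smooth ``power'' factor, at the price of still having to bound the conjugate of $w'$.

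That last bound is the one place where your text is not yet a proof, and your diagnosis of the difficulty there is off: no $\log m$ ever has to be removed. With the estimates you already have, $|w'(t)|\le |(\log\cal{G}_m)'|+2|R'|/|R|\lesssim m(1+m|t|)^{\alpha/2-1}$ on $(-\upsilon,\upsilon)$ and $|w'|\lesssim 1$ elsewhere, the outer region is handled by the plain triangle inequality. The term $w'(\theta_0)$ contributes nothing there, because the cotangent kernel is odd: its principal value over the full circle and its integral over the symmetric window $|t-\theta_0|<m^{-1}$ both vanish, hence so does its integral over the complement. The remaining integral $\int_{|t-\theta_0|>1/m}|w'(t)|\,|t-\theta_0|^{-1}dt$ is $\lesssim m$ uniformly in $\theta_0$: where $|t-\theta_0|\ge |t|/2$ one integrates $m(1+m|t|)^{\alpha/2-1}/\max(|t|,m^{-1})$, which is $\lesssim m$ since $\alpha/2-2<-1$; where $|t-\theta_0|<|t|/2$ one has $|t|\sim|\theta_0|$, and the factor $\log(2+m|\theta_0|)$ produced by the kernel is absorbed by $(1+m|\theta_0|)^{\alpha/2-1}$. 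So neither the integration by parts nor the ``logarithmic continuity'' machinery is needed (alternatively, you could import the paper's rescale-and-Cauchy--Schwarz step verbatim for $w$); the loss you feared would occur only if $|w'|$ stayed of size $m$ on an interval much longer than $m^{-1}$, which your own bounds exclude. Writing out this outer estimate, plus the routine check that your pointwise bounds hold on all of $\mathbb{T}$ (the conjugation integral runs over the whole circle), turns the proposal into a complete proof.
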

\begin{proof}   Recall that (see (\ref{mult-mult}))
\begin{equation}\label{mult-mult1}
Q_m(z)=\exp\left(\frac{1}{2\pi} \int_{-\pi}^\pi C(z,e^{i\xi})\log |Q_m(e^{i\xi})|d\xi\right)
\end{equation}
and $\phi(\theta)=\arg Q_m(e^{i\theta})$, i.e.
\[
\phi(\theta)=\Im \left(  \frac{1}{2\pi} \int_{-\pi}^\pi C(e^{i\theta},e^{i\xi})\log |Q_m(e^{i\xi})|d\xi\right)
\]
where
\[
C(e^{i\theta},e^{i\xi})=\frac{e^{i\xi}+e^{i\theta}}{e^{i\xi}-e^{i\theta}}
\]
where the integral is taken in principal value.
\[
\phi(\theta)=-\frac{1}{2\pi}\int_{-\pi}^\pi \frac{\cos((\xi-\theta)/2)}{\sin((\xi-\theta)/2)}\log|Q_m(e^{i\xi})|d\xi
\]
which amounts to controlling the Hilbert transform of $\log|Q_m(e^{i\xi})|$ since
\[
 \frac{\cos(\xi/2)}{\sin(\xi/2)}=\frac{2}{\xi}+O(\xi), \quad \left(\frac{\cos(\xi/2)}{\sin(\xi/2)}\right)'=-\frac{2}{\xi^2}+O(1)
\]
Therefore, if $x\in (-\upsilon,\upsilon)$, then
\[
|\phi'(x)|\lesssim  \left|\int_{-\pi}^\pi \frac{D_m'(\xi+x)}{\xi}d\xi\right|+1
\]
where
\[
D_m(\xi)=\log\left( \cal{G}_m(\xi) +|R_{(m,\alpha/2)}(e^{i\xi})|^2\right)+\log
m^{-1}
\]
(since $(\log m^{-1})'=0$).
So
\[
|\phi'(x)|\lesssim  1+m\left|\int_{-m\pi}^{m\pi} \frac{M_m'(t+\widehat{x})}{tM_m(t+\widehat x)}dt\right|, \quad M_m(t)=\exp D_m(t/m),
\quad \widehat{x}=mx
\]
and
\begin{eqnarray} \label{em-en}
M_m(t)=\frac{\sin^2 (t/2)}{m^2\sin^2(t/(2m))} +\frac{\cos^2
(t/2)}{2m^2\sin^2((t-\pi)/(2m))}+
\\
\frac{\cos^2(t/2)}{2m^2\sin^2((t+\pi)/(2m))}+
m^{-1}|R_{(m,\alpha/2)}(e^{it/m})|^2 \nonumber
\end{eqnarray}
due to (\ref{feya}) and (\ref{sdvig}).
Thus we only need to show that we
have
\[
I_1(\widehat x)=\left|\int_{-1}^1 \frac{M_m'(t+\widehat{x})}{tM_m(t+\widehat x)}dt\right|\lesssim 1
\]
and
\[
I_2(\widehat x)=\left|\int_{1<|t|<\pi m} \frac{M_m'(t+\widehat{x})}{tM_m(t+\widehat x)}dt\right|\lesssim 1
\]
uniformly in $\widehat x\in[-m\upsilon,m\upsilon]$.

Let us start with $I_2$ and so $|t|<\pi m$. Therefore, for $\xi=t+\widehat x$ we have $|\xi|<(\pi+\upsilon)m$. We can rewrite (\ref{em-en}) as follows
\begin{eqnarray}
M_m(\xi)=
\frac{\sin^2 (\xi/2)}{(\xi/2)^2}\left(1+G\left(\frac{\xi}{2m}\right)\right) +\frac{\cos^2
(\xi/2)}{2((\xi-\pi)/2)^2}\left(1+G\left(\frac{\xi-\pi}{2m}\right)\right)+\nonumber
\\
\frac{\cos^2
(\xi/2)}{2((\xi+\pi)/2)^2}\left(1+G\left(\frac{\xi+\pi}{2m}\right)\right)
+
m^{-1}|R_{(m,\alpha/2)}(e^{i\xi/m})|^2\label{uh-uh}
\end{eqnarray}
where
\[
G(x)=\frac{x^2}{\sin^2 x}-1
\]
is positive infinitely smooth function defined on $(-\pi,\pi)$ and
$G(x)\sim x^2$ on $(-a,a)\subset (-\pi,\pi)$.

For large $|\xi|$, we have
\begin{equation}\label{bolshoy}
M_m(\xi)
=\left(\frac{4}{\xi^2}+O(\xi^{-3})\right)
\left(1+O\left(\frac{\xi^2}{m^2}\right)\right)
+
m^{-1}|R_{(m,\alpha/2)}(e^{i\xi/m})|^2\gtrsim |\xi|^{-2}+m^{-1}|R_{(m,\alpha/2)}(e^{i\xi/m})|^2
\end{equation}
and
\begin{equation}\label{maliy}
M_m(\xi)>C+m^{-1}|R_{(m,\alpha/2)}(e^{i\xi/m})|^2
\end{equation}
uniformly in $m$ and  $\xi\in (-a,a)$ with any fixed $a$.
 For the derivative of $M_m(\xi)$, the representation (\ref{uh-uh}) gives an upper bound
\begin{equation}\label{last-term}
|M_m'(\xi)|\lesssim \frac{1}{(1+|\xi|)^3}+\frac{1}{m^2(|\xi|+1)}+\Bigl|(m^{-1}|R_{(m,\alpha/2)}(e^{i\xi/m})|^2)'\Bigr|
\end{equation}
\smallskip

Consider first $1<|\xi|<(\pi+\upsilon) m$. The lemma \ref{poly2} gives
\[
M_m(\xi)\gtrsim \frac{1}{\xi^2}+m^{-1}\left|\frac{\xi}{m}\right|^{-\alpha}
\]
Now, it is sufficient to use lemmas \ref{poly2} and  \ref{derider} to bound the last term in (\ref{last-term}) as
\[
\Bigl|(m^{-1}|R_{(m,\alpha/2)}(e^{i\xi/m})|^2)'\Bigr|\lesssim \frac{1}{m^2} \left|\frac{\xi}{m}\right|^{-\alpha/2}\frac{m^{1+\alpha/2}}{|\xi|}
\]
Combining the bounds we have
\begin{equation}\label{modin}
\left|\frac{M_m'(\xi)}{M_m(\xi)}\right|\lesssim
 \frac{
\displaystyle
 \frac{1}{|\xi|^3}+\frac{1}{m^2|\xi|}+\frac{m^{\alpha-1}}{|\xi|^{1+\alpha/2}}}
{
\displaystyle
\frac{1}{\xi^2}+\frac{m^{\alpha-1}}{|\xi|^\alpha}}
\leq
 \frac{
\displaystyle
 \frac{1}{|\xi|^3}+\frac{1}{m^2|\xi|}}
{
\displaystyle
\frac{1}{\xi^2}
}+
\frac{
\displaystyle
\frac{m^{\alpha-1}}{|\xi|^{1+\alpha/2}}}
{
\displaystyle
\frac{m^{\alpha-1}}{|\xi|^\alpha}}
\lesssim
|\xi|^{-1}+|\xi|^{\alpha/2-1}
\end{equation}
for $1<|\xi|<(\pi+\upsilon) m$.
\smallskip

For $|\xi|<1$, the analogous estimates give
\begin{equation}\label{modin1}
\left|\frac{M_m'(\xi)}{M_m(\xi)}\right|\lesssim
\frac{1+m^{\alpha-1}}{1+m^{\alpha-1}}\lesssim 1
\end{equation}
Combining (\ref{modin}) and (\ref{modin1}), we get
\begin{equation}\label{modin11}
\left|\frac{M_m'(\xi)}{M_m(\xi)}\right|\lesssim
(|\xi|+1)^{-1}+(|\xi|+1)^{\alpha/2-1}
\end{equation}
which holds on $|\xi|<(\pi+\upsilon)m$ uniformly.
Now, the Cauchy-Schwarz inequality implies the bound
for $I_2$
\[
|I_2(\widehat x)|\leq \left(  \int_{1<|t|<\pi m} \frac{dt}{t^2} \right)^{1/2}
\left(
\int_{|\xi|<(\pi+\upsilon)m} \left| \frac{M'(\xi)}{M(\xi)}\right|^2d\xi
\right)^{1/2}\lesssim 1
\]
\smallskip

Consider $I_1$. Apply the mean value formula to rewrite it as
\[
|I_1(\widehat x)|=\left|
\int_{-1}^1 \frac{1}{t}
\left(
\frac{M_n'(\widehat x)}{M_n(\widehat x)}+t\left(
\frac{M_n'(\xi)}{M_n(\xi)}
\right)'_{\xi=\xi_{\widehat x, t}}\right)
dt\right|\lesssim \left\| \frac{M_m''}{M_m}\right\|_\infty+\left\| \frac{M_m'}{M_m}\right\|^2_\infty
\]
The second term was estimated in (\ref{modin1}) so we only need to control the first one. We use (\ref{uh-uh}), (\ref{bolshoy}), and (\ref{maliy}) to get
\[
\left|\frac{M_m''(\xi)}{M_m(\xi)}\right|\lesssim \frac {
\displaystyle \frac{1}{(|\xi|+1)^2}+\left|  \left(
m^{-1}|R_{(m,\alpha/2)}(e^{i\xi/m})    |^2 \right)''\right| } {
\displaystyle
\frac{1}{1+\xi^2}+m^{-1}\left|R_{(m,\alpha/2)}(e^{i\xi/m})\right|^2
}
\]
The estimates from the lemmas \ref{derider} and \ref{poly2} in Appendix A can now be used as follows.
We have
\[
\left|\frac{M_m''(\xi)}{M_m(\xi)}\right|\lesssim 1+\left|
\frac{(R_{(m,\alpha/2)}(e^{i\xi/m}))'}{R_{(m,\alpha/2)}(e^{i\xi/m})}
\right|^2+\left|
\frac{(R_{(m,\alpha/2)}(e^{i\xi/m}))''}{R_{(m,\alpha/2)}(e^{i\xi/m})}
\right|
\]
For $1<|\xi|<\upsilon m+1$, one gets
\[
\left|
\frac{(R_{(m,\alpha/2)}(e^{i\xi/m}))'}{R_{(m,\alpha/2)}(e^{i\xi/m})}
\right|\lesssim |\xi|^{\alpha/2-1}, \quad
\left|
\frac{(R_{(m,\alpha/2)}(e^{i\xi/m}))''}{R_{(m,\alpha/2)}(e^{i\xi/m})}
\right|\lesssim |\xi|^{\alpha/2-1}
\]
For $|\xi|<1$, we have
\[
\left|
\frac{(R_{(m,\alpha/2)}(e^{i\xi/m}))'}{R_{(m,\alpha/2)}(e^{i\xi/m})}
\right|\lesssim 1,\quad
\left|
\frac{(R_{(m,\alpha/2)}(e^{i\xi/m}))''}{R_{(m,\alpha/2)}(e^{i\xi/m})}
\right|\lesssim 1
\]
This gives a bound
\[
 \left\| \frac{M_m''}{M_m}\right\|_\infty\lesssim 1
\]
which ensures
\[
|I_1(\widehat x)|\lesssim 1
\]
uniformly in $\widehat x\in[-m\upsilon,m\upsilon]$. This finishes the proof of the lemma.
\end{proof}

\bigskip

{\bf Remark.} Since $Q_m$ has no zeroes in $\overline{\mathbb{D}}$, we can write it as
\[
Q_m(z)=\prod_{j=1}^m (z-z_j)
\]
where $z_j\in \{|z|>1\}$ are its zeroes. Therefore,
\[
\arg Q_m(e^{i\theta})=\sum_{j=1}^m \Im \log(e^{i\theta}-z_j)
\]
Taking the derivative, we have
\[
\phi'=\sum_{j=1}^m \Im \left(  \frac{ie^{i\theta}}{e^{i\theta}-z_j}  \right)
\]
This formula shows that the size of $\phi'$ is controlled by the location of zeroes with respect to the unit circle.

\end{document}